\newsavebox{\@brx}
\newcommand{\llangle}[1][]{\savebox{\@brx}{\(\m@th{#1\langle}\)}%
  \mathopen{\copy\@brx\kern-0.5\wd\@brx\usebox{\@brx}}}
\newcommand{\rrangle}[1][]{\savebox{\@brx}{\(\m@th{#1\rangle}\)}%
  \mathclose{\copy\@brx\kern-0.5\wd\@brx\usebox{\@brx}}}
\begin{document}
\def\e#1\e{\begin{equation}#1\end{equation}}
\def\ea#1\ea{\begin{align}#1\end{align}}
\def\eq#1{{\rm(\ref{#1})}}
\theoremstyle{plain}
\newtheorem{thm}{Theorem}[section]
\newtheorem{lem}[thm]{Lemma}
\newtheorem{prop}[thm]{Proposition}
\newtheorem{cor}[thm]{Corollary}
\theoremstyle{definition}
\newtheorem{dfn}[thm]{Definition}
\newtheorem{ex}[thm]{Example}
\newtheorem{rem}[thm]{Remark}
\newtheorem{conjecture}{Conjecture}

\newcommand{\D}{\mathrm{d}}
\newcommand{\A}{\mathcal{A}}
\newcommand{\LL}{\llangle[\Big]}
\newcommand{\RR}{\rrangle[\Big]}
\newcommand{\LD}{\Big\langle}
\newcommand{\RD}{\Big\rangle}
\newcommand{\C}{\mathbb{C}}
\newcommand{\F}{\mathcal{F}}
\newcommand{\HH}{\mathcal{H}}
\newcommand{\X}{\mathcal{X}}
\newcommand{\Z}{\mathbb{Z}}
\newcommand{\E}{\mathcal{E}}
\newcommand{\PP}{\mathbb{P}}
\newcommand{\K}{\mathscr{K}}
\newcommand{\Q}{\mathfrak{Q}}
\newcommand{\q}{\mathbf{q}}

\numberwithin{equation}{section}

\makeatletter
\newcommand{\subjclass}[2][2010]{%
  \let\@oldtitle\@title%
  \gdef\@title{\@oldtitle\footnotetext{#1 \emph{Mathematics Subject Classification.} #2}}%
}
\newcommand{\keywords}[1]{%
  \let\@@oldtitle\@title%
  \gdef\@title{\@@oldtitle\footnotetext{\emph{Key words and phrases.} #1.}}%
}
\makeatother

\title{\bf LG/CY Correspondence for Elliptic Orbifold Curves via Modularity}
\author{Yefeng Shen and Jie Zhou}
\date{}
\subjclass{14N35, 11Fxx}
\maketitle

\begin{abstract}
We prove the Landau-Ginzburg/Calabi-Yau correspondence between the Gromov-Witten theory of each elliptic orbifold curve and its Fan-Jarvis-Ruan-Witten theory counterpart via modularity. We show that the correlation functions in these two enumerative theories are different representations of the same set of quasi-modular forms, expanded around different points on the upper-half plane. We relate these two representations by the Cayley transform.
\end{abstract}

\setcounter{tocdepth}{2} \tableofcontents

\section{Introduction}

Landau-Ginzburg/Calabi-Yau correspondence (LG/CY correspondence for short) is a duality originating from physics \cite{Witten:1993} between the Landau-Ginzburg (LG) model and the non-linear $\sigma$-model defined from the same pair of data $(W,G)$. 
The pairs (W, G) considered in this paper satisfy the following three conditions:
\begin{enumerate}
\item[(1)] The polynomial $W$ is the so-called \emph{superpotential} of the LG-model. It is a weighted homogeneous polynomial over $\C^n$
\begin{equation}
W: \C^n\to\C\,.
\end{equation}
The weight of the $i$-th variable on $\C^n$ will be denoted by $q_i$\,.
\item[(2)]
The polynomial $W$ satisfies the {\em Calabi-Yau condition} (or CY condition) that
\begin{equation}\label{cy-condition}
\sum_{i=1}^n q_i=1\,.
\end{equation} 
\item[(3)] 
The group $G$ is a subgroup of the \emph{group of diagonal symmetries} given by
\begin{equation}\label{diag-symm}
{\rm Aut}(W):=\left\{(\lambda_1,\dots,\lambda_n)\in(\mathbb{C}^*)^n\Big\vert\,W(\lambda_1\,x_1,\dots,\lambda_n\,x_n)=W(x_1,\dots,x_n)\right\}\,.
\end{equation}
The group $G$ is required to contain the so-called \emph{exponential grading element}
\begin{equation}\label{expon-grade}
J_W=\left(\exp(2\pi\sqrt{-1}q_1), \cdots, \exp(2\pi\sqrt{-1}q_n)\right)\,.
\end{equation}
\end{enumerate}
We remark that the \emph{central charge} of $W$ is $n-2$, i.e.,
\begin{equation*}
\hat{c}_W:=\sum_{i=1}^n(1-2q_i)=n-2\,.
\end{equation*}
The CY condition \eqref{cy-condition} implies that the hypersurface $X_W$ defined by $\{W=0\}$ is a ($n-2$)-dimensional CY variety in a weighted projective space. Since $G$ acts on $\C^n$ by homothety, it induces an action on $X_W$, with $J_W\in G$ acting trivially. Thus we get the following CY orbifold 
which is a global quotient
\begin{equation}
\X_{(W,G)}:=X_W/\left(G/\langle J_W\rangle\right)\,.
\end{equation}

\subsection*{GW theory and FJRW theory}

The CY side of the LG/CY correspondence is the Gromov-Witten (GW) theory of the orbifold $\X_{(W,G)}$ which studies the intersection theory of the moduli spaces of stable maps from orbifold curves to the target $\X_{(W,G)}$.
While the LG side is the Fan-Jarvis-Ruan-Witten (FJRW) theory of the pair $(W,G)$ \cite{Fan:20071, Fan:20072}. 
It is an enumerative theory which virtually counts the solutions to the Witten equation \cite{Witten:1993} for the pair $(W,G)$
and is the mathematical construction of the LG A-model of $(W,G)$. 
The details of the GW theory and FJRW theory of the targets studied in this paper will be given in Section \ref{secLG}.

For the pair $(W, G)$, both GW theory and FJRW theory come with a graded vector space equipped with a non-degenerate pairing, which we denote by
\begin{equation*}
\left(\mathcal{H}^{\rm GW},\eta^{\rm GW}\right), \quad \left(\mathcal{H}^{\rm FJRW},\eta^{\rm FJRW}\right)\,.
\end{equation*}
Here $\mathcal{H}^{\rm GW}$ is the \emph{Chen-Ruan cohomology} \cite{Chen:2004} of $\X_{(W,G)}$, and $\mathcal{H}^{\rm FJRW}$ is the \emph{FJRW state space} \cite{Fan:20072} of $(W,G)$.
Let $\overline{\mathcal{M}}_{g,k}$ be the Deligne-Mumford moduli space of $k$-pointed stable curves of genus $g$ and $\psi_j\in H^2(\overline{\mathcal{M}}_{g,k}, \mathbb{Q})$ be the $j$-th $\psi$-class.
Let also $\beta$ be an effective curve class in the underlying coarse moduli of $\X_{(W,G)}$, $\{\alpha_j\}$ be elements in $\mathcal{H}^{\rm GW}$ and $\{\gamma_j\}$ be elements in $\mathcal{H}^{\rm FJRW}$. Then one can define the \emph{ancestor GW invariant} $\langle\alpha_1\psi_1^{\ell_1},\cdots,\alpha_k\psi_k^{\ell_k}\rangle_{g,k,\beta}^{\rm GW}$ and the \emph{FJRW invariant} $\langle \gamma_1\psi_1^{\ell_1},\cdots,\gamma_k\psi_k^{\ell_k}\rangle_{g,k}^{\rm FJRW}$ as integrals over $\overline{\mathcal{M}}_{g,k}$\,. See \eqref{gw-inv} and \eqref{fjrw-inv} for the precise definitions.

We parametrize a K\"ahler class $\mathcal{P}\in\mathcal{H}^{\rm GW}$ by $t$ and set $q=e^{t}$. The Divisor Axiom in GW theory allows us to define a \emph{GW correlation function} as a formal $q$-series
\begin{equation}\label{cy-correlation}
\LL\alpha_1\psi_1^{\ell_1},\cdots,\alpha_k\psi_k^{\ell_k}\RR_{g,k}^{\rm GW}(q)=\sum_{\beta}\LD\alpha_1\psi_1^{\ell_1},\cdots,\alpha_k\psi_k^{\ell_k}\RD_{g,k, \beta}^{\X}\ q^{\int_\beta \mathcal{P}}\,.
\end{equation}
Similarly, we parametrize a degree $2$ element $\phi\in\mathcal{H}^{\rm FJRW}$ by $u$ 
and define an \emph{FJRW correlation function} 
\begin{equation}\label{lg-correlation}
\LL\gamma_1\psi_1^{\ell_1},\cdots,\gamma_k\psi_k^{\ell_k}\RR_{g,k}^{\rm FJRW}(u)=\sum_{n\geq0}{u^n\over n!}\LD \gamma_1\psi_1^{\ell_1},\cdots,\gamma_k\psi_k^{\ell_k}, \phi, \cdots, \phi\RD_{g,k+n}\,.
\end{equation}
The LG/CY correspondence \cite{Witten:1993, Fan:20072} says that the two enumerative theories should be equivalent under an appropriate transformation.  See \cite{Chiodo:2010, Chiodo:2011, Krawitz:2011, Priddis:2013, Chiodo:2014, Priddis:2014} for progresses on the correspondence for various pairs $(W,G)$ at genus zero and higher genus.

Under certain circumstances, one can use modular forms as a tool to establish the correspondence between the GW correlation function in \eqref{cy-correlation} and the FJRW correlation function in \eqref{lg-correlation}.
In this paper, we address this idea for CY orbifolds of dimension one.

\subsection*{LG/CY correspondence for elliptic orbifold curves}
Any one-dimensional CY orbifold must be of the form
\begin{equation}
\X_{a_1}:=\mathbb{P}^1_{a_1, \cdots, a_m}\,, \quad \quad \sum_{i=1}^{m}\frac{1}{a_i}=m-2\,, \quad a_1\geq a_2\geq \cdots \geq a_m>1\,.
\end{equation}
Its underlying space is the projective space $\mathbb{P}^1$ and it has $m$ orbifold points. Each orbifold point is decorated by some $a_i\in\Z_{\geq2}$ if the isotropy group is the cyclic group $\boldsymbol{\mu}_{a_i}$.
These CY orbifolds are called \emph{elliptic orbifold curves} and there are four such orbifolds in total: $\mathbb{P}^1_{2,2,2,2}, \mathbb{P}^1_{3,3,3}, \mathbb{P}^1_{4,4,2}$ and $\mathbb{P}^1_{6,3,2}$.
Each of them can be realized as a quotient of certain elliptic curve $\mathcal{E}_{r}$
\begin{equation}
\X_r=\E_{r}/\boldsymbol{\mu}_{r}, \quad r=a_1=2,3,4,6\,.
\end{equation}
The relation to the elliptic curves is what underlies the (quasi-) modularity of the GW theory which will be reviewed below. 
These elliptic orbifold curves are among the simplest CY varieties and can serve as some toy models in testing ideas and conjectures related to both geometric and arithmetic aspects of mirror symmetry.\\

The goal of this work is to prove the LG/CY correspondence for the pairs $(W,G)$ for which $\X_{(W,G)}$ is an elliptic orbifold curve using modular forms. 
We shall choose the pairs $(W,G)$ as shown in Tab. \ref{tableWGdata}. Here for the \emph{pillowcase orbifold} $\mathbb{P}^1_{2,2,2,2}$, $G_1$ is generated by $(\sqrt{-1},\sqrt{-1})$ and $(1,-1)\in {\rm Aut}(x_1^4+x_2^4)$.
In general, the data $(W,G)$ realizing an elliptic orbifold curve is not unique, see \cite{Krawitz:2011, Milanov:2012qu, Basalaev:2016} for more choices. 

\begin{table}[h]
  \caption[Data of $(W,G)$ for the LG and CY models]{Data of $(W,G)$ for the LG and CY models}
  \label{tableWGdata}
  \renewcommand{\arraystretch}{1.5} 
\begin{displaymath}
  \begin{tabular}{| c| c | c  |}
    \hline
$r=a_1$& elliptic~orbifold~$\X_r$ &  $(W, G)$\\ \hline
$r=3,4,6$&$ \mathbb{P}^{1}_{a_1,a_2,a_3}$   &$\left(x_{1}^{a_1}+x_{2}^{a_2}+x_{3}^{a_3}, \mathrm{Aut}(W)\right)$\\ \hline
$r=2$&$ \mathbb{P}^{1}_{2,2,2,2}$   &  $\left(x_1^4+x_2^4+x_3^2, G_1\times {\rm Aut}(x_3^2)\right)$ \\
     \hline
  \end{tabular}
\end{displaymath}
\end{table}

The LG/CY correspondence for the elliptic orbifold curves $\mathbb{P}^{1}_{a_1,a_2,a_3}$ is already studied in \cite{Krawitz:2011, Milanov:2011, Milanov:2012qu} using techniques from mirror symmetry, Saito's theory of primitive forms \cite{Saito:1983}, and Givental's higher genus formula \cite{Givental:20012}. 
The correspondence is realized by analytic continuation and quantization formulas \cite{Milanov:2011}. 
However, these techniques do not generalize to the pillowcase orbifold since the mirror singularity would have a non-trivial group action and the corresponding theory of primitive forms has not been developed yet. 

\subsection*{LG/CY correspondence via Cayley transform}

While the previous works \cite{Krawitz:2011, Milanov:2011, Milanov:2012qu} rely heavily on the global and analytic structure of the mirror B-model correlation functions, the present work uses the proof of quasi-modularity for the GW correlation functions in \cite{Shen:2014}, and a thorough study on the expansions of these quasi-modular forms around elliptic points on the upper-half plane.  Our proof of the LG/CY correspondence for all elliptic orbifold curves, including the pillowcase orbifold $\mathbb{P}^1_{2,2,2,2}$, works with the A-model directly.

The crucial ingredient in our approach is the \emph{Cayley transformation} $\mathscr{C}: \widehat{M}(\Gamma)\subseteq C^{\omega}_{\mathbb{H}}\rightarrow C^{\omega}_{\mathbb{D}}$
and its variant
$\mathscr{C}_{\mathrm{hol}}: \widetilde{M}(\Gamma)\rightarrow \mathcal{O}_{\mathbb{D}}$. They are
induced by the Cayley transform $\mathcal{C}:\mathbb{H} \to \mathbb{D}$ from the upper-half plane to the disk: based at an interior point $\tau_{*}$ on the upper-half plane, it is given by
\begin{equation}
\mathcal{C}(\tau)={\tau-\tau_{*}\over {\tau\over \tau_{*}-\bar{\tau}_{*} }-  {\bar{\tau}_{*}\over \tau_{*}-\bar{\tau}_{*} }}\,.
\end{equation}
Here $\mathcal{O}$ and $C^{\omega}$ denote the ring of holomorphic and real analytic 
functions, and
$ \widehat{M}(\Gamma),  \widetilde{M}(\Gamma)$
stand for the ring of almost-holomorphic modular forms and quasi-modular forms \cite{Kaneko:1995} for a modular group $\Gamma<\mathrm{SL}_{2}(\mathbb{Z})$, respectively.

Our main result is the following theorem on the LG/CY correspondence for elliptic orbifold curves at the level of correlation functions in all genus.
\begin{thm}\label{main-thm}
[Theorem \ref{lg-cy-main} below]
Let $(W,G)$ be a pair in Tab. \ref{tableWGdata}. Then there exists a degree and pairing preserving isomorphism between the graded vector spaces
\begin{equation}
\mathscr{G}: \left(\mathcal{H}^{\rm GW},\eta^{\rm GW}\right)\to \left(\mathcal{H}^{\rm FJRW},\eta^{\rm FJRW}\right)
\end{equation}
and a Cayley transformation $\mathscr{C}_{\rm hol}$, based at an elliptic point $\tau_{*}\in \mathbb{H}$,  such that for any $\{\alpha_j\}\subseteq \mathcal{H}^{\rm GW},$
\begin{equation}
\mathscr{C}_{\rm hol}\left(\LL\alpha_1\psi_1^{\ell_1},\cdots,\alpha_k\psi_k^{\ell_k}\RR_{g,k}^{\rm GW}(q)\right)=
\LL\mathscr{G}(\alpha_1)\psi_1^{\ell_1},\cdots, \mathscr{G}(\alpha_k)\psi_k^{\ell_k}\RR^{\rm FJRW}_{g,k}(u)\,.
\end{equation}
\end{thm}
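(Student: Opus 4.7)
\medskip

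\noindent\textbf{Proof proposal.} My plan is to reduce the statement to a finite amount of low-genus, low-point initial data on each side, then check that the Cayley transform $\mathscr{C}_{\mathrm{hol}}$ matches those initial data, and finally use reconstruction to propagate the equality to all correlation functions.

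\emph{Step 1: Reconstruction.} Both the GW theory of $\X_{(W,G)}$ and the FJRW theory of $(W,G)$ give rise to a cohomological field theory on the state space $(\HH^{\mathrm{GW}},\eta^{\mathrm{GW}})$ and $(\HH^{\mathrm{FJRW}},\eta^{\mathrm{FJRW}})$ respectively. Since the relevant Frobenius manifolds are generically semisimple for all four pairs in Tab.~\ref{tableWGdata}, Givental's formula and Teleman's uniqueness theorem for semisimple CohFTs reconstruct every ancestor correlation function $\LL\cdots\RR_{g,k}$ from (i) the pairing, (ii) the genus-zero primary $3$-point functions (the quantum/FJRW product), and (iii) the $R$-matrix computed from the Euler field. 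In particular, it suffices to prove the theorem for the basic three-point genus-zero correlators, the $4$-point correlators needed to identify the Euler field, and show that the $R$-matrices correspond under $\mathscr{C}_{\mathrm{hol}}$.

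\emph{Step 2: Identifying the state spaces.} I would construct $\mathscr{G}$ using the standard LG-mirror/Krawitz-style isomorphism: the FJRW state space decomposes into sectors indexed by $G$, and narrow sectors map to cohomology classes supported at the orbifold points of $\X_{(W,G)}$ while the broad/untwisted part maps to the ambient cohomology $H^*(\X_{(W,G)})$. One then verifies that the identifications are compatible with the gradings (using the CY condition $\sum q_i=1$ and the age formula) and with the pairings. For each pair in Tab.~\ref{tableWGdata} this is an explicit finite check.

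\emph{Step 3: Quasi-modularity and the Cayley transform.} By \cite{Shen:2014} every GW correlation function $\LL\cdots\RR_{g,k}^{\mathrm{GW}}(q)$ lies in $\widetilde{M}(\Gamma)$ for the appropriate congruence subgroup $\Gamma<\mathrm{SL}_2(\Z)$ depending on $r$. The Cayley transform $\mathscr{C}_{\mathrm{hol}}$ simply re-expands such a quasi-modular form as an analytic function of the local uniformizer $u$ at an elliptic point $\tau_*\in\mathbb{H}$; crucially, $\mathscr{C}_{\mathrm{hol}}$ is a ring homomorphism and commutes with the differential $q\partial_q$ up to the appropriate weight shift. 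Therefore WDVV, TRR, and the Virasoro constraints translate identically on the two sides, which is exactly what makes the reconstruction compatible with the transform.

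\emph{Step 4: Matching initial data at $\tau_{*}$.} The key identification is choosing $\tau_*$ to be the elliptic point fixed by the order-$r$ automorphism of $\E_r$ (so $\tau_*=\sqrt{-1}$ for $r=2,4$ and $\tau_*=e^{2\pi\sqrt{-1}/3}$ for $r=3,6$). Under the geometric quotient $\X_r=\E_r/\boldsymbol{\mu}_r$, the FJRW theory is precisely the LG A-model whose natural flat coordinate is the $\boldsymbol{\mu}_r$-invariant local coordinate at the orbifold point $[\tau_*]$ on the modular curve, i.e.\ $u=\mathcal{C}(\tau)^r$ (up to a constant). The FJRW primary $3$-point functions can be evaluated by the concavity axiom and the FJRW selection rules directly as Taylor coefficients in $u$. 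Setting these equal to $\mathscr{C}_{\mathrm{hol}}$ applied to the genus-zero primary GW three-point functions is then a finite explicit verification, one pair at a time.

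\emph{Step 5: Propagation.} Once genus-zero primary data and the Euler field match, the $R$-matrix on the FJRW side is the Cayley transform of the $R$-matrix on the GW side, because the Euler field and canonical coordinates of the Frobenius manifold transform naturally under the change of coordinate $q\mapsto u$. Applying Givental--Teleman on both sides yields the claimed identity of all ancestor correlators.

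\emph{Main obstacle.} The hardest step is Step~4 for the pillowcase orbifold $\mathbb{P}^1_{2,2,2,2}$. For the other three orbifolds the FJRW basic invariants are essentially fixed by dimension counting and the selection rules, but for $\mathbb{P}^1_{2,2,2,2}$ the FJRW state space contains broad sectors and the genus-zero three- and four-point functions are not automatically pinned down by axioms alone; one must combine the WDVV equation with the explicit form of $\mathscr{C}_{\mathrm{hol}}$ at $\tau_*=\sqrt{-1}$ (where the relevant quasi-modular forms for $\Gamma(2)$ have a known Taylor expansion) to extract the needed initial data. Every other step is a careful but essentially mechanical application of CohFT machinery and the properties of the Cayley transform developed in the body of the paper.
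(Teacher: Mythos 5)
Your overall architecture (match a finite set of building blocks under the Cayley transform, then reconstruct) is the same as the paper's, and you correctly locate the crux in the invariance of the WDVV/Ramanujan system plus boundary conditions, with the pillowcase broad sectors as the delicate case. However, there are two concrete problems. First, in Step 4 the change of variable is wrong: the FJRW parameter $u$ is a \emph{linear} rescaling of the Cayley coordinate, $u=c\cdot\mathcal{C}(\tau)$ with $c$ an explicit constant fixed by the connection matrix of the hypergeometric equation (the factor $M_{\mathrm{rational}}$ times an integer), not $u=\mathcal{C}(\tau)^{r}$. A power map is not a Cayley transformation, and with $u=\mathcal{C}(\tau)^{r}$ the boundary conditions such as $f_{2}(u)=\tfrac{u}{3}+\cdots$ for $\mathbb{P}^{1}_{3,3,3}$ could not match the elliptic expansion $\mathscr{C}(A_{3})=\tfrac13(2\pi i\,\tilde{\tau}_{\mathrm{orb}})+\cdots$. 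Relatedly, the base point is not the $\mathrm{SL}_{2}(\mathbb{Z})$-elliptic point $i$ or $e^{2\pi i/3}$ but the elliptic point of the relevant $\Gamma_{0}(N)$ with $(N,r)=(1^{*},6),(2,4),(3,3),(4,2)$ --- e.g.\ $\tau_{*}=\tfrac{1+i}{2}$ for the pillowcase (a $\Gamma_{0}(2)$ point, since $\Gamma_{0}(4)$ has no elliptic point at all) --- and the specific representative in the $\mathrm{SL}_{2}(\mathbb{Z})$-orbit matters, because a different representative moves the large-volume cusp away from $i\infty$ and destroys the match with the $q$-expansion.

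Second, your Steps 1 and 5 replace the paper's reconstruction (WDVV for genus-zero primaries, Getzler's relation for $\LL\mathcal{P}\RR_{1,1}$, and $g$-reduction for $\psi$-classes and higher genus, all of which only require $\mathscr{C}_{\mathrm{hol}}$ to be a homomorphism of differential rings) by Givental--Teleman. That route needs (i) generic semisimplicity of both Frobenius manifolds, which you assert without proof, and (ii) the claim that the FJRW $R$-matrix is the Cayley transform of the GW $R$-matrix. Point (ii) is not a formality: it is essentially the quantization statement $\mathscr{C}_{\mathrm{hol}}(\mathcal{A}^{\mathrm{GW}})=\widehat{\mathscr{G}^{-1}}(\mathcal{A}^{\mathrm{FJRW}})$, which the paper obtains as a \emph{consequence} of the theorem, not as an input; asserting it "because the Euler field and canonical coordinates transform naturally" begs the question. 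You would need to prove that $\mathscr{C}_{\mathrm{hol}}$ applied to the GW CohFT is again a homogeneous CohFT and then invoke Teleman's uniqueness --- a workable but unwritten argument. The paper's tautological-relations route sidesteps all of this and is why it extends painlessly to $\mathbb{P}^{1}_{2,2,2,2}$.
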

The explicit map $\mathscr{G}$ depends on the specific model and will be given in Section \ref{secLGCYcorrespondence}. 
We now explain the main idea of the proof in a few steps. 
\begin{itemize}
\item
According to \cite{Krawitz:2011}, the WDVV equations and $g$-reduction technique work for both the GW theories of elliptic orbifold curves and their FJRW companions. 
These techniques allow us to write both GW and FJRW correlation functions as polynomials in some genus zero correlation functions, which we call \emph{building blocks}. 
\item
In \cite{Shen:2014}, the authors proved that the GW correlation functions of any elliptic orbifold curve are $q$-expansions of some quasi-modular forms for certain modular group (described in Tab. \ref{tablequasimodularity} below). The proof is based on the observation that the WDVV equations for the GW building blocks coincide with the Ramanujan identities for the corresponding quasi-modular forms, as well as the boundary conditions.
\item
We study the local expansions of the quasi-modular forms which arise as the GW building blocks around some elliptic points on the upper-half plane.
These expansions are called \emph{elliptic expansions} and the elliptic points are singled out according to the specific modular group.
We relate these elliptic expansions to the $q$-expansions near the infinity cusp by the Cayley transformation. 
\item
The elliptic expansions for these quasi-modular forms satisfy equations similar to the Ramanujan identities, also the first few terms of these elliptic expansions can be computed straightforwardly. 
We identify them with the WDVV equations and boundary conditions satisfied by the building blocks in the corresponding FJRW theory.
This then leads to the matching between the building blocks in the GW and FJRW theories via the Cayley transformation.
\item
Theorem \ref{main-thm} is then established since the Cayley transformation turns out to be compatible with the reconstruction process for both GW and FJRW correlation functions in all genus.
\end{itemize}

\subsection*{Outline of the paper}

In Section \ref{secLG} we review briefly the quasi-modularity in the GW theories of elliptic orbifold curves and the basics of FJRW theory. 
Following the method given in \cite{Krawitz:2011}, we use WDVV equations and the boundary conditions to get explicit formulas for the prepotentials of the FJRW theories in terms of the building blocks.

Section \ref{secmodularform} discusses the elliptic expansions of quasi-modular forms. 
We first study the elliptic expansions in general, then specialize to the quasi-modular forms that are involved in Tab. \ref{tablequasimodularity} with the help of the elliptic curve families therein.
The computations on the periods and monodromies, especially the precise constants involved on which the local expansions depend on very sensitively, are relegated to Appendix \ref{appendixB}.

Section \ref{secLGCYcorrespondence} is devoted to comparing the WDVV equations and boundary conditions for the two enumerative theories and establishing the LG/CY correspondence using the results obtained in Section \ref{secLG} and \ref{secmodularform}. 

\subsection*{Acknowledgement}
Both authors would like to thank Yongbin Ruan for support.
Y.~S.~would like to thank Amanda Francis, Todor Milanov, Hsian-Hua Tseng and Don Zagier for helpful discussions.
J.~Z.~would like to thank Kevin Costello and Shing-Tung Yau for constant encouragement and support. He also thanks
Boris Dubrovin, Di Yang for useful discussions on Frobenius manifold and integrable hierarchy, and Kathrin Bringmann, Roelof Bruggeman for enlightening discussions and correspondences on elliptic expansions of modular forms.
We also thank the anonymous referees for useful comments which helped improving the paper.

Y.~S.~is partially supported by NSF grant DMS-1159156. J.~Z.~is supported by the Perimeter Institute for Theoretical Physics. Research at Perimeter Institute is supported  by the Government  of Canada through Industry Canada and  by the Province of Ontario through the Ministry of Economic Development and Innovation.

In the middle stage of the present project, we were informed that \cite{Basalaev:2016} was also working on the LG/CY correspondence for elliptic orbifold curves. We thank the authors for correspondences.

\section{WDVV equations in GW theory and FJRW theory}
\label{secLG}

\subsection{Quasi-modularity in GW theories of elliptic orbifold curves}

We first recall the basics of GW theories of elliptic orbifold curves following the exposition part in \cite{Shen:2014}.

Let $H^*_{\rm CR}(\X, \mathbb{C})$ be the Chen-Ruan cohomology of an elliptic orbifold curve $\X$. It is equipped with the pairing $\eta(\cdot ,\cdot)$ and the Chen-Ruan product $\bullet$\,. Moreover,
\begin{equation}
H^*_{\rm CR}(\X, \mathbb{C})=\left(\bigoplus_{i=1}^{m}\bigoplus_{j=1}^{a_i-1}\C\Delta_i^{j}\right)\bigoplus\C\mathbf{1}\bigoplus\C\mathcal{P}\,.
\end{equation}
Here $\Delta_i$ represents the Poincar\'e dual of the fundamental class of the $i$-th orbifold point, with its degree shifted by $2/a_i$; $\Delta_i^j$ is ordinary cup product of $j$-copies of $\Delta_i$; $\bf{1}$ is the Poincar\'e dual of the fundamental class; $\mathcal{P}$ is the Poincar\'e dual of the point class.

Let $\overline{\mathcal{M}}_{g,k, \beta}^{\X}$ be the moduli space of orbifold stable maps $f$ from a genus-$g$ $k$-pointed orbifold curve $C$ to $\X$, with degree $\beta:=f_*[C]\in H_2(\X, \mathbb{Z})$. It has a virtual fundamental class which we denote by $\left[\overline{\mathcal{M}}_{g,k, \beta}^{\X}\right]^{\rm vir}$. Let  $\pi$ be the forgetful morphism to $\overline{\mathcal{M}}_{g,k}$ and $\{{\rm ev}_j\}_{j=1}^{k}$ be the evaluation morphisms to the inertial orbifold.  Then the \emph{ancestor orbifold GW invariant} of $\X$ (called \emph{GW correlator}) is defined to be the integral of the following GW class \cite{Chen:2001, Abramovich:2006}, 
\begin{equation}\label{gw-inv}
\LD \alpha_1\psi_1^{\ell_1},\cdots,\alpha_k\psi_k^{\ell_k}\RD_{g,k,\beta}^{\X}
:=\int_{\left[\overline{\mathcal{M}}_{g,k, \beta}^{\X}\right]^{\rm vir}}
\prod_{j=1}^k{\rm ev}_j^*(\alpha_j)\prod_{j=1}^k\pi^*\left(\psi_j^{\ell_j}\right)\,,
\end{equation}
where  $\alpha_j\in H^*_{\rm CR}(\X, \mathbb{C}),j=1,2,\cdots k$.
We define the \emph{ancestor GW correlation function} as
\begin{equation}
\LL\alpha_1\psi_1^{\ell_1},\cdots,\alpha_k\psi_k^{\ell_k}\RR_{g,k}^{\X}
:=\sum_{n\geq 0}{1\over n!}\sum_{\beta}\LD \alpha_1\psi_1^{\ell_1},\cdots,\alpha_k\psi_k^{\ell_k}, t \mathcal{P}, \cdots, t\mathcal{P}\RD^{\X}_{g,k+n, \beta}\\
\,.
\end{equation}
It is identical to the formula in \eqref{cy-correlation} due to the Divisor Axiom.
The GW invariants give rise to various structures on $H^*_{\rm CR}(\X, \mathbb{C})$.
Among them the \emph{quantum multiplication} $\bullet_{q}$ is defined by 
\begin{equation}
\alpha_1\bullet_{q}\alpha_2=\sum_{\mu, \nu}\LL\alpha_1,\alpha_2,\mu\RR^{\X}_{0,3}\eta^{(\mu, \nu)}\nu\,.
\end{equation}
Here both $\mu, \nu$ range over a basis of $H^*_{\rm CR}(\X, \mathbb{C})$ and $\eta^{(\cdot ,\cdot)}$ is the inverse of the pairing $\eta(\cdot ,\cdot)$.
At the large volume limit $t=-\infty$ (or equivalently $q=0$), the quantum multiplication $\bullet_{q}$ becomes the Chen-Ruan product, i.e., $\bullet_{q=0}=\bullet$\,.

\subsubsection*{WDVV equations and quasi-modularity}

For elliptic orbifold curves, the $q$-series in \eqref{cy-correlation} are proved to be quasi-modular forms \cite{Milanov:2011, Satake:2011, Shen:2014}. The method in \cite{Milanov:2011} uses mirror symmetry and Givental's formalism. It works for all the three cases except for the pillowcase orbifold $\mathbb{P}^1_{2,2,2,2}$.
The method in \cite{Shen:2014}, reviewed below, does not rely on mirror symmetry and applies to all cases.

Let us now recall the WDVV equations.
Let $S$ be a set of indices and $A,B$ be a partition of $S$, that is, $S=A\cup B$ with $A\cap B=\emptyset$. Denote the cardinality of $S$ by $|S|$. There is a forgetful morphism $p: \overline{\mathcal{M}}_{0, |S|+4}\to \overline{\mathcal{M}}_{0, 4}$ which forgets the first $|S|$-markings of a stable curve and then contracts all the unstable components. We index the remaining four points by $i, j, k$ and $\ell$. One can pull back the homologically identical boundary classes from $\overline{\mathcal{M}}_{0, 4}$ to $\overline{\mathcal{M}}_{0, |S|+4}$ and then get the identical boundary classes shown schematically as follows

\begin{equation}\label{wdvv-graph}
\begin{picture}(50,20)


\put(27, 8){$\Longleftrightarrow$}
	\put(50,9){\line(-3,4){5}}
    \put(50,9){\line(-3,-4){5}}
	\put(50,9){\line(1,0){10}}
	\put(60,9){\line(1,0){10}}
    \put(70,9){\line(3,4){5}}
    \put(70,9){\line(3,-4){5}}

	
	\put(43,17){$i$}
	\put(43,0){$k$}
	\put(75,17){$j$}
	\put(75,0){$\ell$}

    \put(50,9){\line(-3,2){5}}
    \put(50,9){\line(-3,-2){5}}
	\put(43,8){$\vdots$}

	\put(37,8){$A\bigg\{$}
	\put(77,8){$\bigg\} B$}

    \put(70,9){\line(3,2){5}}
    \put(70,9){\line(3,-2){5}}
	\put(75,8){$\vdots$}

	\put(-10,9){\line(-3,4){5}}
    \put(-10,9){\line(-3,-4){5}}
	\put(-10,9){\line(1,0){10}}
	\put(0,9){\line(1,0){10}}
    \put(10,9){\line(3,4){5}}
    \put(10,9){\line(3,-4){5}}

	
	\put(-17,17){$i$}
	\put(-17,0){$j$}
	\put(15,17){$k$}
	\put(15,0){$\ell$}

	\put(-23,8){$A\bigg\{$}
	\put(17,8){$\bigg\} B$}

\put(-6,10){$\mu$}
\put(5,10){$\nu$}

\put(54,10){$\mu$}
\put(65,10){$\nu$}

    \put(-10,9){\line(-3,2){5}}
    \put(-10,9){\line(-3,-2){5}}
	\put(-17,8){$\vdots$}

    \put(10,9){\line(3,2){5}}
    \put(10,9){\line(3,-2){5}}
	\put(15,8){$\vdots$}

\end{picture}
\end{equation}

Integrating the GW classes over these identical boundary classes gives a system of equations among the correlators and correlation functions. These equations, called WDVV equations in GW theory, reflect the associativity of the quantum multiplication. More explicitly, one has
\begin{equation}\label{wdvv}
\begin{aligned}
\sum_{A\cup B=S}\,\sum_{\mu,\nu}\,& \sum_{\beta_A+\beta_B=\beta} \LD \alpha_i,\alpha_j, \alpha_A, \mu\RD_{0, |A|+3, \beta_A}\,  \eta^{(\mu,\nu)}\, \LD \nu, \alpha_B, \alpha_k, \alpha_\ell\RD_{0, |B|+3, \beta_B}\\
&=\sum_{A\cup B=S}\,\sum_{\mu,\nu}\, \sum_{\beta_A+\beta_B=\beta}\LD \alpha_i,\alpha_k, \alpha_A, \mu\RD_{0, |A|+3, \beta_A}\, \eta^{(\mu,\nu)}\, \LD \nu, \alpha_B, \alpha_j, \alpha_\ell\RD_{0, |B|+3, \beta_B}\,.
\end{aligned}
\end{equation}
Here the notation $\alpha_A$ means a list $\alpha_a, a\in A$ and the others are similar.

It is shown in \cite{Shen:2014} that the system of WDVV equations for the building blocks for the elliptic orbifold curve $\X_r$ is equivalent to the system of Ramanujan identities ($\partial_{\tau}:={1\over 2\pi i}{\partial\over \partial \tau}$):
\begin{equation}\label{ramanujan}
\left\{
\begin{aligned}
\partial_{\tau}A_{N}&={1\over 2r}A_{N}(E_{N}+{2C_{N}^{r}-A_{N}^{r}\over A_{N}^{r-2}})\,,\\
\partial_{\tau}B_{N}&={1\over 2r}B_{N}(E_{N}-A_{N}^{2})\,,\\
\partial_{\tau}C_{N}&={1\over 2r}C_{N}(E_{N}+A_{N}^{2})\,,\\
\partial_{\tau}E_{N}&={1\over 2r}(E_{N}^{2}-A_{N}^{4})\,.
\end{aligned}
\right.
\end{equation}
Here $(N, r)=(1^{*},6), (2,4), (3,3), (4,2)$ and $A_N, B_N, C_N, E_N$ are some specific quasi-modular forms for the the modular group $\Gamma_{0}(N)<\mathrm{SL}_{2}( \Z)$.
See \cite{Maier:2009, Maier:2011, Zhou:2013hpa} for details on these quasi-modular forms.
The boundary conditions also match.

As a consequence of the existence and uniqueness of solutions to an ODE system, the building block correlation functions are quasi-modular forms. 
More precisely, the quasi-modularity for the GW theories of these elliptic orbifold curves has the pattern \cite{Shen:2014} indicated in Tab. \ref{tablequasimodularity}. The elliptic curve families shown in the table have the origin from the mirror singularities \cite{Milanov:2011, Krawitz:2011, Satake:2011}. See also the recent works \cite{Cho:2014, Lau:2014, Cho:2015} for a geometric construction of these curve families using Floer theory and SYZ mirror symmetry.

\begin{table}[h]
  \caption[Quasi-modularity in GW theories of elliptic orbifold curves]{Quasi-modularity in GW theories of elliptic orbifold curves}
  \label{tablequasimodularity}
  \renewcommand{\arraystretch}{1.5} 
\begin{displaymath}
  \begin{tabular}{ | c | c c  c c| }
    \hline
 elliptic~orbifold~$\mathcal{X}_{r}=\mathcal{X}_{(W,G)}$ &  $ \mathbb{P}^{1}_{2,2,2,2}$ &  $ \mathbb{P}^{1}_{3,3,3}$ &  $ \mathbb{P}^{1}_{4,4,2}$ &  $ \mathbb{P}^{1}_{6,3,2}$\\ \hline
  group~action~$\mathbb{Z}_{r}$ in~forming~the~orbifold &  $r=2$ &  $3$ &  $4$ &  $6$\\
modular group~$\Gamma(r)$ for correlation functions &$ \Gamma(2) $&$\Gamma(3)$& $\Gamma(4)$& $\Gamma(6)$\\
elliptic~curve~family&  $ D_{4}$ &  $  E_{6}$ &  $ E_{7}$ &  $E_{8}$\\
 modular~group~$\Gamma_{0}(N)$ for~elliptic~curve~family&   $ \Gamma_{0}(4)$ &  $\Gamma_{0}(3)$ &  $ \Gamma_{0}(2)$ &  $\Gamma_{0}(1^{*})$\\
     \hline
  \end{tabular}
\end{displaymath}
\end{table}

Moreover, using WDVV equations further and tautological relations in higher genus (including Getzler's relation and the $g$-reduction technique), all the non-vanishing correlation functions in \eqref{cy-correlation} can be expressed as polynomials of the building block correlation functions and hence are quasi-modular forms. This will be further explained in Section \ref{secLGCYcorrespondence}.

\subsection{Review on FJRW theory}

In this section we first review the basic ingredients of FJRW theory \cite{Fan:20071, Fan:20072}. Then we discuss the WDVV equations and boundary conditions for the FJRW theories of the pairs in Tab. \ref{tableWGdata}.

The main ingredient in the FJRW theory for a pair $(W,G)$ is a \emph{Cohomological Field Theory} (CohFT in short,  in the sense of \cite{Kontsevich:1994}) on an \emph{FJRW state space} $\HH_{(W,G)}$.
Explicitly,
\begin{equation}\label{fjrw-state}
\HH_{(W,G)}:=\bigoplus_{h\in G}\HH_{h} \quad \text{with} \quad \HH_h:=(H^{N_h}({\rm Fix}(h), W_h^{\infty}; \C))^{G}\,.
\end{equation}
Here ${\rm Fix}(h)\subseteq\C^n$ is the fixed locus of the element $h\in G$ and is a space of complex dimension $N_h$, $W_h^{\infty}:=({\rm Re}\, W|_{{\rm Fix}(h)})^{-1} (-\infty, M\ll 0)$, where ${\rm Re}\, W|_{{\rm Fix}(h)}$ is the real part of $W|_{{\rm Fix}(h)}$.

If ${\rm Fix}(h)\neq 0$, the elements in $\HH_h$ will be called \emph{broad}.
If ${\rm Fix}(h)=0\in\C^n$, then $N_h=0$, and 
$\HH_h=H^{0}(\{0\}, \emptyset; \C)\cong\C$. Its elements are called \emph{narrow}. In this case, there is a canonical choice 
\begin{equation}
\phi_h=1\in H^{0}(\{0\}, \emptyset; \C)\,.
\end{equation}
The state space $\HH_{(W,G)}$ is a graded vector space. For each $h\in G$, there exist unique $\{\Theta_h^{(i)}\in [0,1)\cap\mathbb{Q}\}_{i=1}^{n}$, such that 
\begin{equation}
h=\left(\exp(2\pi\sqrt{-1}\Theta_h^{(1)}), \cdots, \exp(2\pi\sqrt{-1}\Theta_h^{(n)})\right)\,.
\end{equation}
For a homogeneous element $\gamma\in\HH_h$, its degree is defined by
\begin{equation}\label{fjrw-deg}
\deg_W\gamma:=\frac{N_h}{2}+\sum_{i=1}^{n}(\Theta_h^{(i)}-q_i)\,.
\end{equation}

\subsubsection*{FJRW invariants}
 The CohFT for an FJRW theory consists of multi-linear maps 
\begin{equation}
\Lambda_{g,k}^{(W,G)}: (\HH_{W,G})^{\otimes k}\to H^*(\overline{\mathcal{M}}_{g,k}, \mathbb{C})\,.
\end{equation}
The construction of $\{\Lambda_{g,k}^{(W,G)}\}_{g,k}$ is highly non-trivial. It involves solving the Witten equation as well as constructing a virtual fundamental cycle analytic over the moduli space of solutions \cite{Fan:20071}. 
See also \cite{Chang:2013} for an algebro-geometric construction for the narrow elements via the cosection technique.

Let $\{\gamma_j\}\subseteq \HH_{(W,G)}$, and $\psi_j\in H^*(\overline{\mathcal{M}}_{g,k}, \mathbb{C})$ be the $j$-th $\psi$-class, the following integral defines a genus-$g$ $k$-point \emph{FJRW invariant} (or \emph{FJRW correlator}),
\begin{equation}\label{fjrw-inv}
\LD\gamma_1\psi_1^{\ell_1},\cdots,\gamma_k\psi_k^{\ell_k}\RD_{g,k}^{(W,G)}
:=\int_{\overline{\mathcal{M}}_{g,k}}\Lambda_{g,k}^{(W,G)}(\gamma_1,\cdots,\gamma_k) \prod_{j=1}^k\psi_j^{\ell_j}\,.
\end{equation}
The invariant is called {\em primary} if $\ell_j=0$ for all $1\leq j\leq k$. 
The class $\Lambda_{g,k}^{(W,G)}(\gamma_1,\cdots,\gamma_k)$ has a degree which depends on the central charge $\hat{c}_W$, the genus $g$, and $\{\deg_W\phi_j\}$. The correlator in \eqref{fjrw-inv} is nonzero only if the following condition (called \emph{Degree Axiom}) holds
\begin{equation}\label{degree-axiom}
\hat{c}_W(g-1)+\sum_{j=1}^{k}\deg_W\gamma_j+\sum_{j=1}^{k}\ell_j=3(g-1)+k\,.
\end{equation}
Let $\gamma_j\in\HH_{h_j},j=1,2\cdots k$, if the correlator in \eqref{fjrw-inv} is nonzero, then the following \emph{Selection Rule} holds
\begin{equation}\label{selection}
q_i(2g-2+k)-\sum_{j=1}^{k}\Theta^{(i)}_{h_{j}}\in \Z\,,\quad i=1,2\cdots n\,.
\end{equation}
We refer the readers to \cite{Fan:20072} for a complete list of axioms that these FJRW invariants satisfy. 
As a consequence, the FJRW invariants induce various structures on the state space $\HH_{(W,G)}$, including:
\begin{itemize}
\item[(1)] A Frobenius algebra $(\HH_{(W,G)}, \bullet)$, where the multiplication $\bullet$ is defined from the pairing $\eta^{\rm FJRW}(\cdot,\cdot)$ on $\HH_{(W,G)}$ and the genus zero $3$-point invariants through the following formula
\begin{equation}\label{fjrw-ring}
\eta^{\rm FJRW}(\gamma_1, \gamma_2\bullet\gamma_3)=\LD\gamma_1,\gamma_2,\gamma_3\RD_{0,3}^{(W, G)}\,.
\end{equation}
\item[(2)] A formal Frobenius manifold structure on $\HH_{(W,G)}$. Fixing a basis $\{\phi_j\}$ of $\HH_{(W,G)}$, the prepotential $\mathcal{F}_{0,(W,G)}^{\rm FJRW}$ is assembled from all genus zero primary FJRW invariants
\begin{equation}\label{fjrw-pot}
\mathcal{F}_{0,(W,G)}^{\rm FJRW}=\sum_{k\geq3}
\frac{1}{k!}\LD{\bf u}_0,\cdots,{\bf u}_0\RD_{0,k}^{(W,G)}, \quad {\bf u}_0=\sum_j  u_j\phi_j\,.
\end{equation}

\item[(3)]  Let ${\bf u}=\big({\bf u}_0, {\bf u}_1, \cdots\big)$, where 
 \begin{equation}
 {\bf u}_a=\sum_{j}\, u_{j, a}\, \phi_j\, z^a\,.
 \end{equation}
 The \emph{total ancestor potential} $\mathcal{A}_{(W,G)}^{\rm FJRW}$ is the generating series assembled from the FJRW invariants of all genera:
\begin{equation}\label{fjrw-an}
\mathcal{A}_{(W,G)}^{\rm FJRW}=\exp\left(\sum_{g\geq0}\hbar^{1-g}\sum_{k; 2g-2+k>0}\frac{1}{k!}\LD{\bf u}(\psi),\cdots,{\bf u}(\psi)\RD_{0,k}^{(W,G)}\right)\,.
\end{equation}
\end{itemize}

For simplicity, we shall occasionally drop the super- and sub-scripts like $(g,k)$ and $(W,G)$ when they are clear from the surrounding texts.

\subsubsection*{Quantum product and derivatives}

For an element $\phi\in \HH_{(W,G)}$ with $\deg_W\phi=1$, we parametrize it by $u$ and introduce the following formal power series (primary correlation function, see \eqref{lg-correlation})
\begin{equation}\label{translation}
\LL \gamma_1,\cdots,\gamma_k\RR_{g,k} =\sum_{n\geq0}{u^n\over n!}\LD \gamma_1,\cdots,\gamma_k,\phi, \cdots, \phi\RD_{g,k+n}\,.
\end{equation}
Its derivative is again a formal power series
\begin{equation}\label{derivative-fjrw}
\begin{aligned}
\frac{\rm d}{{\rm d}u}\LL \gamma_1,\cdots,\gamma_k\RR_{g,k}
&
=\sum_{n\geq1}{u^{n-1}\over (n-1)!}\LD \gamma_1,\cdots,\gamma_k,\phi, \cdots, \phi\RD_{g,k+1+n-1}\\
&=\LL \gamma_1,\cdots,\gamma_k, \phi\RR_{g,k+1}
\,.
\end{aligned}
\end{equation}
Let $\eta^{(\cdot, \cdot)}$ be the inverse of the paring $\eta^{\rm FJRW}(\cdot, \cdot)$. One can consider a product $\bullet_u$ defined by
\begin{equation}
\gamma_1\bullet_u\gamma_2=\sum_{\gamma, \xi}\LL\gamma_1,\gamma_2,\gamma\RR_{0,3}\eta^{(\gamma, \xi)}\xi\,.
\end{equation}
Clearly this \emph{quantum product} $\bullet_u$ is a deformation of the multiplication $\bullet$ in \eqref{fjrw-ring} as $\bullet_{u=0}=\bullet$\,.

\subsection{WDVV equations in FJRW theories}

Now we discuss the WDVV equations in the FJRW theories for the pairs in Tab. \ref{tableWGdata}.
We restrict ourselves to the cases when the elliptic orbifold curve is $\mathbb{P}^{1}_{3,3,3}$ or $\mathbb{P}^{1}_{2,2,2,2}$. The other two cases are similar to the $\mathbb{P}^{1}_{3,3,3}$ case.
The following definition will be useful later.
\begin{dfn}
Given a homogeneous basis $\{\gamma_j\}$ of $\HH_{(W,G)}$, an element $\gamma\in \HH_{(W,G)}$ is called \emph{primitive} if it cannot be written as $\gamma= \gamma_k \bullet \gamma_\ell$ with $\deg_W\gamma_k, \deg_W\gamma_\ell >0$. A primary correlator $\langle  \cdots \rangle_{0, k}$ (or a primary correlation function $\langle \langle  \cdots \rangle \rangle_{0, k}$) is called \emph{basic} if at least $k-2$ insertions are primitive. 
\end{dfn}

\subsubsection*{Cubic case}
We first consider the $r=3$ case in Tab. \ref{tableWGdata} with
\begin{equation}\label{fjrw-cubic}
\left(W=x_1^3+x_2^3+x_3^3, \quad G={\rm Aut}(W)\cong\boldsymbol{\mu}_{3}\times \boldsymbol{\mu}_{3}\times \boldsymbol{\mu}_{3} \right)\,.
\end{equation}
Let $\omega=\exp(2\pi \sqrt{-1}/3)$. By the definition given in \eqref{fjrw-state}, it is easy to see that each of the following elements in ${\rm Aut}(W)$ generates an one-dimensional subspace $\HH_{h_i}$ of $\HH_{(W,G)}$:
\begin{equation*}
\left\{
\begin{array}{llll}
h_0=(\omega,\omega,\omega),&
h_1=(\omega^2,\omega,\omega),&
h_2=(\omega,\omega^2,\omega),&
h_3=(\omega,\omega,\omega^2)\\
h_7=(\omega^2,\omega^2,\omega^2),&
h_6=(\omega,\omega^2,\omega^2),&
h_5=(\omega^2,\omega,\omega^2),&
h_4=(\omega^2,\omega^2,\omega)
\end{array}
\right.
\end{equation*}
We now pick $\phi_i=1\in H^0({\rm Fix}(h_i))$ and fix a basis $\{\phi_i\}_{i=0}^{7}$. The non-degenerate pairing is given by
\begin{equation*}
\eta^{\rm FJRW}(\phi_i, \phi_j)=\delta_{i+j\,,\,7}\,.
\end{equation*}
The non-trivial relations are
\begin{eqnarray*}
&
\phi_i\bullet\phi_j=\phi_{i+j+1} \quad \text{and} \quad \phi_i\bullet\phi_j\bullet\phi_k=\phi_7 \quad \text{if} \quad \{i, j, k\}=\{1,2,3\}\,,\nonumber\\
&\phi:=\phi_7=\phi_k\bullet\phi_{7-k}, \quad \quad 0\leq k\leq7\,.
\end{eqnarray*}
Here as before we parametrize $\phi$ by $u$, and also parametrize $\phi_{i}$ by $u_{i}$, $i=0,\cdots 6$.
\begin{rem}
In this case, by mapping $\phi_i$ to $x_i$,  $i=1,2,3$, one gets a ring isomorphism \cite{Krawitz:2011}  from the state space to the Jacobi algebra of $W$
\begin{equation*}
\left(\HH_{(W, {\rm Aut}(W))}, \bullet\right)\cong {\rm Jac}(W):=\C[x_1,x_2,x_3]\bigg/\left(\frac{\partial W}{\partial x_1}, \frac{\partial W}{\partial x_2}, \frac{\partial W}{\partial x_3}\right)\,.
\end{equation*}
\end{rem}

Since $\deg_W\phi=1$, according to \eqref{translation} one can define the following basic correlation functions:
\begin{equation}\label{cubic-block}
\left\{
\begin{array}{lll}
f_1(u):=\LL \phi_1,\phi_2,\phi_3\RR, &
f_2(u):=\LL \phi_1,\phi_1,\phi_1\RR,&
f_3(u):=\LL \phi_1,\phi_1,\phi_6,\phi_6\RR, 
\\
f_4(u):=\LL \phi_1,\phi_2,\phi_4,\phi_4\RR, &
f_5(u):=\LL \phi_1,\phi_1,\phi_4,\phi_5\RR, &
f_6(u):=\LL \phi_1,\phi_2,\phi_5,\phi_6\RR\,.
\end{array}
\right.
\end{equation}
We remark that the correlation functions are invariant under the action of the symmetric group $\mathfrak{S}_3$ on the ordered pairs $\{(\phi_1, \phi_6), (\phi_2, \phi_5), (\phi_3,\phi_4)\}$.

\begin{prop}\label{fjrw-e3}
The prepotential of the FJRW theory for the pair $(W, G)$ in \eqref{fjrw-cubic} is given by 
\begin{equation}\label{eqncubicF}
\begin{aligned}
\mathcal{F}^{\rm FJRW}_{0,(W,G)}
=&\frac{1}{2} u_0^2u
+\frac{1}{3} u_0(u_1u_6+u_2u_5+u_3u_4)
+(u_1u_2u_3) \, f_{1}
+\frac{1}{6} (u_1^3+u_2^3+u_3^3) \, f_{2}
\\
+&(u_1u_2u_5u_6+u_1u_3u_4u_6+u_2u_3u_4u_5)\, \frac{3f_{3}+f_{2}^2}{6}
+\frac{1}{2} (u_1^2u_4u_5+u_2^2u_4u_6+u_3^2u_5u_6) \, {f_{1}^2\over 3}
\\
+&\frac{1}{2} (u_1u_2u_4^2+u_1u_3u_5^2+u_2u_3u_6^2) \, { f_{1}f_{2}\over 3}
+\frac{1}{4} (u_1^2u_6^2+u_2^2u_5^2+u_3^2u_4^2) \, f_{3}
\\
+&\frac{1}{2} (u_1u_4u_5u_6^2+u_2u_4u_5^2u_6+u_3u_4^2u_5u_6) \, { f_{1}^2 f_{2}  \over 9}
+\frac{1}{4} (u_1u_4^2u_5^2+u_2u_4^2u_6^2+u_3u_5^2u_6^2) \, {   f_{1} f_{2}^2\over 9}
\\
+&\frac{1}{6} (u_1u_6(u_4^3+u_5^3)+u_2u_5(u_4^3+u_6^3)+u_3u_4(u_5^3+u_6^3))  \, { f_{1}^3\over 9}
+\frac{1}{24} (u_1u_6^4+u_2u_5^4+u_3u_4^4) \, { f_{2}^3\over 9}
\\
+&\frac{1}{8}(u_4^2u_5^2u_6^2) \, {2 f_{1}^4+  f_{1} f_{2}^3 \over 27}
+\frac{1}{36} (u_4^3u_5^3+u_4^3u_6^3+u_5^3u_6^3) \, { f_{1}^3  f_{2}\over 9}
\\
+&\frac{1}{24} (u_4u_5u_6^4+u_4u_5^4u_6+u_4^4u_5u_6)  \, { f_{1}^2 f_{2}^2  \over 9}
+\frac{1}{720}(u_4^6+u_5^6+u_6^6) \, {2f_{1}^3 f_{2}-f_{2}^4\over 9}\,.
\end{aligned}
\end{equation}
Here $f_1, f_2, f_3$ are the correlation functions given in \eqref{cubic-block}.
\end{prop}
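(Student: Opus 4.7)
The plan is to follow the method of Krawitz--Shen \cite{Krawitz:2011}.

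First, I would enumerate the potentially non-vanishing genus-zero FJRW correlators by applying the Degree Axiom \eqref{degree-axiom} (with $\hat{c}_W = 1$) together with the three Selection Rule congruences \eqref{selection} (one for each $q_i = 1/3$). Using the degrees
\[
\deg_W\phi_0 = 0, \quad \deg_W \phi_i = \tfrac{1}{3}\ (i=1,2,3), \quad \deg_W\phi_j = \tfrac{2}{3}\ (j=4,5,6), \quad \deg_W\phi_7 = 1,
\]
and exploiting the $\mathfrak{S}_3$-symmetry permuting the pairs $\{(\phi_1,\phi_6), (\phi_2,\phi_5), (\phi_3,\phi_4)\}$ (inherited from the permutation action on the variables $x_1,x_2,x_3$ of $W$), the list of admissible monomials in $u_0, \ldots, u_6, u$ matches precisely those appearing in \eqref{eqncubicF}, together with the classical quadratic piece $\tfrac{1}{2} u_0^2 u + \tfrac{1}{3} u_0(u_1 u_6 + u_2 u_5 + u_3 u_4)$ encoding the Frobenius pairing $\eta^{\mathrm{FJRW}}$ and the unit axiom for $\phi_0$.

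Next, I would pin down the quantum product $\bullet_u$ on the primitive generators. A direct Selection-Rule/Degree-Axiom analysis shows that the only non-trivial 3-point correlation functions among the non-unit, non-socle basis elements are the symmetric families
\[
\phi_i \bullet_u \phi_i = f_2(u)\,\phi_{7-i} \quad (i = 1,2,3), \qquad \phi_i \bullet_u \phi_j = f_1(u)\,\phi_{7-k} \quad (\{i,j,k\} = \{1,2,3\}),
\]
together with $\phi_i \bullet_u \phi_{7-i} = \phi_7$ (no quantum corrections, by the string equation for the unit $\phi_0$ and the Degree Axiom). Any higher-point correlator is then broken apart via the WDVV equations \eqref{wdvv}: repeatedly factorizing along the boundary divisors in $\overline{\mathcal{M}}_{0,k+n}$ reduces each correlator to a sum of products of 3-point functions weighted by the pairing, i.e., to a polynomial in the six basic blocks of \eqref{cubic-block}. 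Tracking the combinatorial prefactor $\tfrac{1}{k!}\binom{k}{m_0, \ldots, m_6, n}$ that arises when expanding $\mathbf{u}_0^{\otimes k}$ in the definition \eqref{fjrw-pot} then yields the precise rational coefficients displayed in \eqref{eqncubicF}.

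The main technical step is to establish the three WDVV identities
\[
f_4 = \tfrac{1}{3}\, f_1 f_2, \qquad f_5 = \tfrac{1}{3}\, f_1^2, \qquad f_6 = \tfrac{1}{2}\, f_3 + \tfrac{1}{6}\, f_2^2,
\]
which eliminate $f_4, f_5, f_6$ from the prepotential in favor of $f_1, f_2, f_3$. Each follows by specializing \eqref{wdvv} to a 5-point configuration with $|S|=1$ and a $\phi_7$-insertion in $S$ (equivalently, by differentiating a 4-point WDVV identity via \eqref{derivative-fjrw}) and matching components through the Frobenius pairing. This is the principal obstacle because the 5-point WDVV produces many cancelling terms and the correct specialization must be found by inspection; the bookkeeping is considerably simplified by the $\mathfrak{S}_3$-symmetry above. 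With these identities in place, substitution into \eqref{fjrw-pot} reproduces \eqref{eqncubicF} term by term, completing the proof.
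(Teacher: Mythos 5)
Your overall architecture --- enumerate the admissible correlators with the Degree Axiom and Selection Rule, reduce everything to the basic blocks \eqref{cubic-block} by WDVV, and eliminate $f_4,f_5,f_6$ via $f_4=\tfrac{1}{3}f_1f_2$, $f_5=\tfrac{1}{3}f_1^2$, $f_6=\tfrac{1}{2}f_3+\tfrac{1}{6}f_2^2$ --- is the paper's, and the three identities you name are the right ones. But the mechanism you propose for proving them fails. A five-point WDVV configuration whose extra insertion is $\phi_7=\phi$ is, by \eqref{derivative-fjrw}, just the $u$-derivative of the corresponding four-point WDVV identity for the correlation functions, and every four-point WDVV identity among the non-unit classes is vacuous here for degree reasons: if $\deg_W\phi_i=\deg_W\phi_j=\tfrac{1}{3}$, the intermediate class $\nu$ has degree $\tfrac{2}{3}$, so the complementary factor $\LL \nu,\phi_k,\phi_\ell\RR_{0,3}$ requires $\deg_W\phi_k+\deg_W\phi_\ell=\tfrac{1}{3}$, impossible for two non-unit insertions; the remaining cases are ruled out similarly. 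So your specialization only produces $0=0$. The useful five- and six-point identities come from inserting elements \emph{other} than $\phi$: the paper takes $(i,j,k,\ell)=(1,2,1,2)$ with $S=\{4\}$ to get $f_1f_4=f_2f_5$, $(1,2,1,5)$ with $S=\{1\}$ to get $f_1f_5=f_2'+f_2f_6$, $(1,2,1,3)$ with $S=\{6\}$ to get $2f_1f_6=f_1f_3+f_2f_4$, and $(1,2,5,7)$, $(2,3,4,7)$ with $S=\{1,1\}$ to get $f_1f_5'=2f_5f_1'$ and $f_1f_3'=2f_6f_1'$.

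Second, the identities you want are not consequences of WDVV alone, so no amount of ``matching components through the Frobenius pairing'' will produce them. WDVV only yields $f_1f_5'=2f_5f_1'$, i.e.\ $f_5=c\,f_1^2$ for an undetermined constant; the value $c=\tfrac{1}{3}$ (and thence the $\tfrac{1}{3}$ in $f_4=f_2f_5/f_1$ and ultimately every rational coefficient in \eqref{eqncubicF}) is fixed by the boundary conditions \eqref{initial-333}, namely $f_1(0)=1$, $f_5(0)=f_2'(0)=\tfrac{1}{3}$, $f_2(0)=f_3(0)=f_4(0)=f_6(0)=0$, which must be computed separately from the Selection Rule and Grothendieck--Riemann--Roch as in Krawitz--Shen. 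Your write-up never invokes any initial data, so these normalizations are left undetermined. With the correct choices of $S$ and the boundary conditions added, your argument closes and coincides with the paper's proof.
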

\begin{proof}
We study the WDVV equations for these correlation functions
basing on the identity illustrated in \eqref{wdvv-graph}. For example, we can choose $(i,j,k,\ell)=(2,1,3,6)$ and $S=\{1\}$. Omitting the diagrams which have no contribution, we obtain 
\begin{center}
\setlength{\unitlength}{0.08cm}
\begin{picture}(60,23)

\put(27, 8){$=$}
\put(83, 8){$+$}
\put(-27, 8){$+$}

	\put(-60,9){\line(-3,4){5}}
    \put(-60,9){\line(-3,-4){5}}
	\put(-60,9){\line(1,0){10}}
	\put(-50,9){\line(1,0){10}}
    \put(-40,9){\line(3,4){5}}
    \put(-40,9){\line(3,-4){5}}

	
	\put(-67,17){$\phi_2$}
        \put(-70,8){$\phi_1$}
	\put(-67,0){$\phi_1$}
        \put(-56,10){$\mu$}
        \put(-45,10){$\nu$}
	\put(-35,17){$\phi_3$}
	\put(-35,0){$\phi_6$}

    \put(-60,9){\line(-2,0){5}}

	\put(50,9){\line(-3,4){5}}
    \put(50,9){\line(-3,-4){5}}
	\put(50,9){\line(1,0){10}}
	\put(60,9){\line(1,0){10}}
    \put(70,9){\line(3,4){5}}
    \put(70,9){\line(3,-4){5}}

	
	\put(43,17){$\phi_2$}
        \put(40,8){$\phi_1$}
	\put(43,0){$\phi_3$}
        \put(54,10){$\mu$}
        \put(65,10){$\nu$}
	\put(75,17){$\phi_1$}
	\put(75,0){$\phi_6$}

    \put(50,9){\line(-2,0){5}}

	\put(100,9){\line(-3,4){5}}
    \put(100,9){\line(-3,-4){5}}
	\put(100,9){\line(1,0){10}}
	\put(110,9){\line(1,0){10}}
    \put(120,9){\line(3,4){5}}
    \put(120,9){\line(3,-4){5}}

	
	\put(93,17){$\phi_2$}
	\put(93,0){$\phi_3$}
        \put(104,10){$\phi_1$}
        \put(115,10){$\phi_6$}
	\put(125,17){$\phi_1$}
        \put(126,8){$\phi_1$}
	\put(125,0){$\phi_6$}

    \put(120,9){\line(2,0){5}}

	\put(-10,9){\line(-3,4){5}}
    \put(-10,9){\line(-3,-4){5}}
	\put(-10,9){\line(1,0){10}}
	\put(0,9){\line(1,0){10}}
    \put(10,9){\line(3,4){5}}
    \put(10,9){\line(3,-4){5}}

	
	\put(-17,17){$\phi_2$}
	\put(-17,0){$\phi_1$}
        \put(-6,10){$\phi_3$}
        \put(5,10){$\phi_4$}
	\put(15,17){$\phi_3$}
        \put(16,8){$\phi_1$}
	\put(15,0){$\phi_6$}

    \put(10,9){\line(3,0){5}}

\end{picture}
\end{center}
We may insert the $\phi$-classes such that their indices belong to $S$ and then sum over the corresponding equations in the way according to the right hand side of \eqref{translation}. Then we get the following equation among the correlation functions:
\begin{eqnarray*}
\LL\phi_1,\phi_2,\phi_1,\phi_3\bullet_u\phi_6\RR&+&
\LL \phi_1,\phi_2\bullet_u\phi_1, \phi_3,\phi_6\RR \nonumber \\
&=&
\LL \phi_1,\phi_2,\phi_3,\phi_1\bullet_u\phi_6\RR+
\LL \phi_1,\phi_2\bullet_u\phi_3,\phi_1,\phi_6\RR\,.
\end{eqnarray*}
Recall that $\phi_6\bullet_u\phi_3=0$, $\phi_k\bullet_u\phi_{7-k}=\phi, 0\leq k\leq7$, we then have
\begin{eqnarray*}
\phi_2\bullet_u\phi_1=\LL \phi_2,\phi_1,\phi_3\RR\ \phi_4=f_1(u)\phi_4\,, \quad \phi_2\bullet_u\phi_3=\LL \phi_2,\phi_3,\phi_1\RR\ \phi_6=f_1(u)\phi_6\,.
\end{eqnarray*} 
Combining with \eqref{derivative-fjrw}, this equation becomes
\begin{equation}
0+f_1(u)f_6(u)=f_1'(u)+f_1(u)f_3(u)\,.
\end{equation}
Here $f_1'(u)$ is the derivative of $f_1(u)$.
Similarly, we can derive the following collection of WDVV equations (the graphical explanations are presented in Appendix \ref{appendixA}):
\begin{equation}\label{wdvv-333}
\left\{
\begin{aligned}
&f_1f_4=f_2f_5, & (i,j,k,\ell)=(1,2,1,2), && S=\{4\},\\
&f_1f_5=f_2'+f_2f_6, &  (i,j,k,\ell)=(1,2,1,5), & & S=\{1\},\\
&2f_1f_6=f_1f_3+f_2f_4, & (i,j,k,\ell)=(1,2,1,3), && S=\{6\},\\
&f_1f_5'=2f_5f_1', & (i,j,k,\ell)=(1,2,5,7), && S=\{1,1\},\\
&f_1f_3'=2f_6f_1', & (i,j,k,\ell)=(2,3,4,7), && S=\{1,1\}.\\
\end{aligned}
\right.
\end{equation}
The boundary conditions can be calculated using the Selection Rule \eqref{selection} 
and Grothendieck-Riemann-Roch formula. As computed in \cite{Krawitz:2011}, one has
\begin{equation}\label{initial-333}
f_2(0)=f_3(0)=f_4(0)=f_6(0)=0, \quad f_1(0)=1, \quad f_2'(0)=f_5(0)={1\over3}\,.
\end{equation}
Using the boundary conditions \eqref{initial-333}, we can solve for $f_4(u), f_5(u)$ and $f_6(u)$ from \eqref{wdvv-333}:
\begin{equation}
f_4= {f_{1}f_{2}\over 3}, \quad f_5= {f_{1}^2\over 3}, \quad f_6=\frac{3f_{3}+f_{2}^2}{6}\,.
\end{equation}
Eliminating the quantity $f_6$, we can rewrite the rest of the equations as
\begin{equation}\label{eqWDVVFJRW333}
\left\{
\begin{aligned}
f_2'&=\frac{1}{6}f_2\Big(-3f_3+\frac{2f_1^3-f_2^3}{f_2}\Big),\\
f_1'&=\frac{1}{6}f_1\Big(-3f_3+f_2^2\Big),\\
(-3f_3)'&=\frac{1}{6}\Big((-3f_3)^2-(f_2^2)^2\Big).
\end{aligned}
\right.
\end{equation}
Following the procedure given in \cite{Krawitz:2011}, we obtain the rest of the correlation functions by analyzing the WDVV equations among all the genus zero primary correlators. 
\end{proof}
It is clear from the above formula \eqref{eqncubicF} for the prepotential that the dependence in $u$ are in polynomials of $f_{1},f_{2},f_{3}$. 
Note also that the above expression assembles the same form as the one for the orbifold GW theory of $\mathbb{P}^1_{3,3,3}$ given in \cite{Satake:2011, Shen:2014}. This observation is one of the motivations for finding the exact matching between the two theories.

\subsubsection*{Pillowcase}

According to \cite[Theorem 4.1.8 (8)]{Fan:20072}, the FJRW theory of the pair $(W,G)$ shown in the $r=2$ case in Tab. \ref{tableWGdata} is essentially equivalent to the FJRW theory of 
\begin{equation}\label{eqnpairforpillowcase}
\left(W_1=x_1^4+x_2^4, \quad G_1=\left\langle J=(\sqrt{-1}, \sqrt{-1}), \sigma=(1, -1)\right\rangle\cong \boldsymbol{\mu}_4\times \boldsymbol{\mu}_2\right)\,.\end{equation}
A basis of $\HH_{(W_1,G_1)}$ is induced from the following set of elements in $G_1$:
\begin{equation*}
h_0=J, \quad h_1=J\sigma, \quad h_2=J^2, \quad h_3=J^3\sigma,\quad h_4=1, \quad h_5=J^3\,.
\end{equation*}
Each element $h_i$ gives an one-dimensional subspace $\HH_{h_{i}}$, which is spanned by
\begin{equation*}
\left\{
\begin{array}{ll}
\phi_i=1\in H^0({\rm Fix}(h_i)), & i\neq4\,,\\
\phi_4=xy\D x\D y\in H^2({\rm Fix}(h_i))\,.&
\end{array}
\right.
\end{equation*}
Note that $\phi_4$ is the only broad element and will occasionally be denoted by $R$ below.
According to the degree formula in \eqref{fjrw-deg}, we have
\begin{equation*}
\deg_W(\phi_0)=0; \quad \deg_W(\phi_5)=1; \quad \deg_{W}(\phi_i)=\frac{1}{2}, \quad i\neq0,5\,.
\end{equation*}
The non-vanishing part of the non-degenerate pairing is given by
\begin{equation*}
\eta^{\rm FJRW} (\phi_0, \phi)=\eta^{\rm FJRW}( \phi_1, \phi_3 )=\eta^{\rm FJRW}( \phi_2, \phi_2)=\eta^{\rm FJRW}( R, R)=1\,.
\end{equation*}
Here $\phi:=\phi_{5}$ is the only degree one element and as before we parametrize it by $u$, we also use the coordinates $u_{i}$ for $\phi_{i},i=0,1,2,3,4$.

The reconstruction of the genus zero primary potential and the boundary conditions are thoroughly discussed in \cite{Francis:2014}. The nonzero primary correlation functions in genus zero must be a linear combination of the following functions (see \cite[Page 17]{Francis:2014})
\begin{equation}\label{pillow-initial}
\left\{
\begin{array}{ll}
g_1:=\LL \phi_1, \phi_1, \phi_2, \phi_2\RR={1\over 4}+\mathcal{O}(u^2), & 
g_2:=\LL \phi_2, \phi_2, \phi_2, \phi_2\RR={u\over 16}+\mathcal{O}(u^3), \\
g_3:=\LL \phi_1, \phi_3, \phi_2, \phi_2\RR={u\over 16}+\mathcal{O}(u^3), &
g_4:=\LL \phi_1, \phi_1, \phi_1, \phi_1\RR={u\over 8}+\mathcal{O}(u^3), \\
g_5:=\LL \phi_1, \phi_1, \phi_1, \phi_3\RR=0+\mathcal{O}(u^3), & 
g_6:=\LL \phi_1, \phi_1, \phi_3, \phi_3\RR=0+\mathcal{O}(u^2), \\
f_1:=\LL R,R, \phi_1, \phi_1\RR=-{1\over 4}+\mathcal{O}(u^2), & 
f_2:=\LL R,R, \phi_2, \phi_2\RR=-{u\over 16}+\mathcal{O}(u^3), \\
f_3:=\LL R, R, \phi_1, \phi_3\RR={u\over 16}+\mathcal{O}(u^3),   & 
f_4:=\LL R, R, R, R\RR={u\over 16}+\mathcal{O}(u^3). 
\end{array}
\right.
\end{equation}
For the correlation functions with broad elements, we obtain the following WDVV equations (see the graphical explanations in Appendix \ref{appendixA})
\begin{equation}\label{pillow-wdvv-1}
\left\{
\begin{aligned}
& f_1'+2f_1f_2=0, & (i,j,k,\ell)=(2,2,1,1), && S=\{4,4\}. \\
&f_2'+2f_2f_3+f_3'=0, & (i,j,k,\ell)=(1,3,2,2), && S=\{4,4\}.\\
&f_3'+f_3^2=f_1^2, & (i,j,k,\ell)=(1,3,1,3), && S=\{4,4\}.\\
\end{aligned}
\right.
\end{equation}
The prepotential is essentially determined from these three correlation functions.
\begin{prop}\label{pillow-fjrw}
The prepotential of the FJRW theory in this case is given by
\begin{equation}
\begin{aligned}
\F_{0,(W,G)}^{\rm FJRW}=&\frac{1}{2}u_0^2 u+u_0\left(u_1u_3+\frac{1}{2}u_2^2+\frac{1}{2}u_4^2\right)+f_1(u)\frac{1}{2!2!}(u_1^2+u_3^2)(u_4^2-u_2^2)+\\
&+f_2(u)\left(\frac{1}{4!}(-u_1^4+u_2^4-u_3^4+u_4^4)+\frac{1}{2!2!}(u_1^2u_3^2+u_2^2u_4^2)\right)+\\
&+f_3(u)\left(\frac{1}{4!}(u_1^4+2u_2^4+u_3^4+2u_4^4)+\frac{1}{2!2!}u_1^2u_3+\frac{1}{2!}u_1u_3(u_2^2+u_4^2)\right).
\end{aligned}
\end{equation}
\end{prop}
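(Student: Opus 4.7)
The strategy parallels the proof of Proposition \ref{fjrw-e3}. First I would invoke the equivalence from \cite[Theorem 4.1.8(8)]{Fan:20072} to replace the $r=2$ pair of Table \ref{tableWGdata} by $(W_1, G_1)$ in \eqref{eqnpairforpillowcase}, working with the six-dimensional state space $\HH_{(W_1,G_1)}$ spanned by $\phi_0,\phi_1,\phi_2,\phi_3,R=\phi_4,\phi=\phi_5$. Writing the prepotential as a formal series with $\mathbf{u}_0 = u_0\phi_0+u_1\phi_1+u_2\phi_2+u_3\phi_3+u_4 R+u\phi$ and applying the string equation to every insertion of the unit $\phi_0$, the $u_0$-dependent part collapses by the pairing table $\eta^{\rm FJRW}(\phi_0,\phi)=\eta^{\rm FJRW}(\phi_1,\phi_3)=\eta^{\rm FJRW}(\phi_2,\phi_2)=\eta^{\rm FJRW}(R,R)=1$, producing exactly the $\frac{1}{2}u_0^2 u$ and $u_0(u_1u_3+\frac{1}{2}u_2^2+\frac{1}{2}u_4^2)$ pieces.

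Next I would classify the surviving $u_0$-independent contributions. Here $n=2$ with $q_1=q_2=\frac{1}{4}$, so $\hat c_{W_1}=1$, and the Degree Axiom \eqref{degree-axiom} for genus zero primary $k$-point correlators forces $\sum_j \deg_W \gamma_j = k-2$. Since $\phi_1,\phi_2,\phi_3,R$ all have degree $\frac{1}{2}$ and $\phi$ has degree $1$, a nonzero correlator with $n$ copies of $\phi$ and the remaining insertions from $\{\phi_1,\phi_2,\phi_3,R\}$ must have exactly four non-$\phi$ entries. The Selection Rule \eqref{selection} for the two coordinates of $G_1$ then eliminates all quadruples except those generating the ten basic correlation functions $g_1,\ldots,g_6,f_1,\ldots,f_4$ listed in \eqref{pillow-initial}; in particular, correlators with an odd number of broad insertions $R$ vanish. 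Summing over $n\geq 0$ repackages these as $u$-dependent correlation functions, so the full prepotential is a linear combination of the ten building blocks with polynomial coefficients in $u_1,u_2,u_3,u_4$ whose multi-symmetry factors are dictated by the number of distinguishable index permutations.

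The third step is to collapse the ten building blocks to the three $f_1,f_2,f_3$. The broad-sector WDVV equations \eqref{pillow-wdvv-1}, derived from choices $(i,j,k,\ell)=(2,2,1,1),(1,3,2,2),(1,3,1,3)$ with $S=\{4,4\}$, give the closed ODE system governing $(f_1,f_2,f_3)$. Parallel WDVV relations involving only narrow insertions (obtained by applying \eqref{wdvv-graph} to suitable $(i,j,k,\ell)$ with $S$ composed of indices in $\{1,2,3\}$) together with the boundary values computed via Grothendieck--Riemann--Roch at $u=0$ shown in \eqref{pillow-initial} force $g_5=g_6=0$ and express $g_1,g_2,g_3,g_4$ as explicit polynomials in $f_1,f_2,f_3$ matching the coefficients in the claimed formula; this is exactly the reconstruction carried out in \cite{Francis:2014}.

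The main obstacle, as in the cubic case, lies in the broad sector: ensuring that all polynomial coefficients of $R^2=\phi_4^2$ monomials indeed reduce to the basic $f_i$'s without introducing further unknowns requires carefully pairing each WDVV equation with the correct boundary value, and symmetrizing over the $\mathfrak{S}_k$ action on the marked points. Once this bookkeeping is completed and the auxiliary $g_i$ are eliminated, collecting monomials yields precisely the three $f_i$-dependent blocks displayed in the statement.
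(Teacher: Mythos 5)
Your overall strategy is the same as the paper's: reduce to the ten building blocks $g_1,\dots,g_6,f_1,\dots,f_4$ of \eqref{pillow-initial} via the Degree Axiom and Selection Rule, use the broad-sector WDVV system \eqref{pillow-wdvv-1} together with the parallel narrow-sector WDVV relations and the $u=0$ boundary values to eliminate the auxiliary functions, and then collect monomials. However, there is a concrete error in the claimed output of that elimination: you assert that the WDVV relations force $g_5=g_6=0$. Only $g_5$ vanishes identically. For $g_6=\LL\phi_1,\phi_1,\phi_3,\phi_3\RR$, the relevant relation (with $g_5=0$) reads $g_6'+2g_3g_6=2g_3^2$; since $g_3=f_3=\tfrac{u}{16}+\mathcal{O}(u^3)\neq 0$, the solution with $g_6(0)=0$ is $g_6=f_2+f_3$, not $0$ (identically zero would force $g_3^2=0$). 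This matters for the statement you are proving: $g_6$ is precisely the coefficient of $\tfrac{1}{2!2!}u_1^2u_3^2$ in the prepotential, and the displayed formula encodes $g_6=f_2+f_3$ through the $f_2\cdot\tfrac{1}{2!2!}u_1^2u_3^2$ and $f_3\cdot\tfrac{1}{2!2!}u_1^2u_3^2$ terms. Setting $g_6=0$ would therefore yield a prepotential that contradicts the proposition.

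A second, smaller omission: you list only $g_1,g_2,g_3,g_4$ as the functions to be expressed in terms of $f_1,f_2,f_3$, but the broad correlation function $f_4=\LL R,R,R,R\RR$ must also be eliminated (one finds $f_4=f_2+2f_3$ from the relations with $(i,j,k,\ell)=(2,2,4,4)$ and $(1,3,4,4)$, $S=\{4,4\}$), since it supplies the $u_4^4$ coefficient $\tfrac{1}{4!}(f_2+2f_3)$ in the final formula. With these two corrections — the full solution being $g_1=-f_1$, $g_2=f_2+2f_3$, $g_3=f_3$, $g_4=-f_2+f_3$, $g_5=0$, $g_6=f_2+f_3$, $f_4=f_2+2f_3$ — your argument coincides with the paper's proof and the bookkeeping in your final step goes through.
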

\begin{proof}
For $f_4$ and the $g$-type correlation functions, we also have WDVV equations. The explicit equations with the corresponding choices for $(i,j,k,\ell)$ and $S$ are given as follows
\begin{equation*}
\left\{
\begin{aligned}
& f_4'+2f_4f_2+f_2'=2f_2^2, & (i,j,k,\ell)=(2,2,4,4), && S=\{4,4\}. \\
& f_4'+2f_4f_3+f_3'=2f_3^2+2f_1^2, & (i,j,k,\ell)=(1,3,4,4), && S=\{4,4\}. \\
&g_1'+2g_2g_1=4g_1g_3, & (i,j,k,\ell)=(2,2,1,1), && S=\{2,2\}.\\
&g_2'+2g_3g_2+g_3'=2g_1^2+2g_3^2, & (i,j,k,\ell)=(1,3,2,2), && S=\{2,2\}.\\
&g_3'+g_3^2=g_1^2, & (i,j,k,\ell)=(1,3,1,3), && S=\{2,2\}.\\
&g_4'+2g_3g_4+2g_1g_5=2g_1^2, & (i,j,k,\ell)=(2,2,1,1), && S=\{1,1\}. \\
&g_1'+2g_5g_1+2g_5g_3+g_5'=2g_1g_3, & (i,j,k,\ell)=(1,3,2,2), && S=\{1,1\}. \\
&2g_6'=g_4^2-g_6^2, & (i,j,k,\ell)=(1,3,1,3), && S=\{1,3\}. \\
&g_5'+2g_5g_6=g_5g_6+g_5g_4, & (i,j,k,\ell)=(1,3,1,3), && S=\{1,1\}.\\
&g_6'+2g_1g_5+2g_3g_6=2g_3^2, & (i,j,k,\ell)=(2,2,3,3), && S=\{1,1\}.\\
&g_5'+g_1(g_4+g_6)+2g_3g_5=2g_1g_3, & (i,j,k,\ell)=(2,2,3,3), && S=\{1,3\}.
\end{aligned}
\right.
\end{equation*}

Comparing these equations to \eqref{pillow-wdvv-1} and using the boundary conditions in \eqref{pillow-initial}, we get
\begin{equation}\label{solution-pillow}
\left\{
\begin{array}{llll}
g_1=-f_1\,, &
g_2=f_2+2f_3\,,&
g_3=f_3\,, &\\
g_4=-f_2+f_3\,,&
g_5=0\,,&
g_6=f_2+f_3\,,&
f_4=f_2+2f_3\,.
\end{array}
\right.
\end{equation}
Now the result follows by straightforward computations.
\end{proof}

\section{Local expansions of quasi-modular and almost-holomorphic modular forms}
\label{secmodularform}

This section is devoted to the discussion on local expansions of modular forms and of their variants quasi-modular forms and almost-holomorphic modular forms. The results will be used in Section \ref{secLGCYcorrespondence} to establish the LG/CY correspondence.

\subsection{Cayley transformation and definition of local expansion}

Before working out the details for the expansions near a point in the interior of the upper-half plane, we first review some standard material about modular forms following \cite{Zagier:2008}, see also the textbooks \cite{Sch:1974, Rankin:1977ab}. Throughout this work by a modular form we mean a modular form with possibly non-trivial multiplier system.\\

The local expansion of a modular form $\phi$ of weight $k\in \mathbb{Z}$ near the infinity cusp $\tau=i\infty$ is obtained by its Fourier development.
In a neighborhood of an interior point $\tau_{*}\in\mathbb{H}$, 
the natural local coordinate for the expansion is
the local uniformizing parameter based at the point $\tau_{*}$. It is defined by
\begin{eqnarray}
\mathbb{H}&\rightarrow& \mathbb{D}\,\nonumber\\
 \tau&\mapsto& S(\tau;\tau_{*})={\tau-\tau_{*}\over \tau-\bar{\tau}_{*}}\,,
\end{eqnarray}
which maps the upper-half plane $\mathbb{H}$ to the unit disk $\mathbb{D}$.
The most natural way \cite{Zagier:2008} to expand the modular form $\phi$ is to consider
\begin{equation}
(\tau(S)-\bar{\tau}_{*})^{k}\phi (\tau(S))=(\tau_{*}-\bar{\tau}_{*})^{k}(1-S)^{-k} \phi (\tau(S))
\end{equation}
as a series in the local uniformizing parameter $S$. Zagier \cite{Zagier:2008} also gives a nice formula for the series whose coefficients are determined by
the derivatives of $\phi (\tau)$ evaluated at $\tau_{*}$.
Furthermore, the coefficients can be computed recursively by making use of the ring structure \cite{Kaneko:1995} of quasi-modular forms and 
are often "algebraic numbers containing interesting arithmetic information".

The above definition for modular forms can be generalized to almost-holomorphic modular forms in $\widehat{M}(\Gamma)$ directly (but not to quasi-modular forms in $\widetilde{M}(\Gamma)$) for a modular group $\Gamma<\mathrm{SL}_{2}(\mathbb{Z})$ which are studied in \cite{Kaneko:1995} and will be reviewed in 
Section \ref{secinvarianceRamanujan} below.
According to the structure theorem in \cite{Kaneko:1995}, 
we think of 
an almost-holomorphic modular form $\phi$
as a real analytic function in the coordinates $(\tau,\bar{\tau})$.

For accuracy, we introduce the following definition.

\begin{dfn}[Local expansion of an almost-holomorphic modular form]\label{dfnlocalexpansion}
Suppose $\phi(\tau,\bar{\tau})\in \widehat{M}(\Gamma)$ is an almost-holomorphic modular form of weight $k\in \mathbb{Z}$. 
Consider the \emph{Cayley transform} with the normalization given by
\begin{equation}\label{eqnlocaluniformizingvariable}
\mathcal{C}: \tau\mapsto s(\tau;\tau_{*})={\tau-\tau_{*}\over {\tau\over \tau_{*}-\bar{\tau}_{*} }-  {\bar{\tau}_{*}\over \tau_{*}-\bar{\tau}_{*} }}\,,
\end{equation}
corresponding to a transform $\gamma_{s}\in \mathrm{SL}_{2}(\mathbb{C})$.
The local expansion of $\phi$ near $\tau_{*}$ is defined to be the Laurent series of the function, which we call the \emph{Cayley transformation of $\phi$}, 
\begin{equation}\label{eqnexpansion}
\mathscr{C}(\phi): (s,\bar{s})\mapsto \left(j_{\gamma_{s}}(\tau(s))\right)^{k}\phi(\tau(s,\bar{s}),\bar{\tau}(s,\bar{s}))\,.
\end{equation}
The prefactor $j_{\gamma}(\tau)$ is the $j$-automorphy factor defined by
\begin{equation}\label{automorphyfactor}
j: \mathrm{SL}_{2}(\mathbb{C})\times \mathbb{H}\rightarrow \mathbb{C}\,,\quad (\gamma=\begin{pmatrix} a&b\\c&d\end{pmatrix}, \tau)\mapsto j_{\gamma}(\tau)=(c\tau+d)\,.
\end{equation}
\end{dfn}
Henceforward for simplicity we shall denote $j_{\gamma_{s}}(\tau(s))$ by $j_{s}$, as a function of $s$ it is
\begin{equation*}
j_{s}=(1-{s\over \tau_{*}-\bar{\tau}_{*}})^{-1}\,.
\end{equation*}
Here in order to avoid the complexification of having to deal with the factor $\det(\gamma)$ in the usual definition of the $j$-automorphy factor,  we have normalized the entries in the Cayley transform $\mathcal{C}$ so that the resulting transform $\gamma_{s}$ gives an element in $\mathrm{SL}_{2}(\mathbb{C})$. 
Later this normalization will be naturally obtained by using monodromy calculation of the differential equations satisfied by the modular forms appearing in Tab.   \ref{tablequasimodularity}. \\

Before we proceed, a few useful remarks are in order. 
One could interpret the prefactor $j_{s}(\tau(s))^{k}$ in \eqref{eqnexpansion} as the "Jacobian"
of the Cayley transform $\mathcal{C}$. It takes case of the analyticity of the change of coordinate
and hence should be combined with the analyticity  of the almost-holomorphic modular form $\phi(\tau,\bar{\tau})$. 
Indeed, we invoke the description of an almost-holomorphic modular form $\phi$ of modular weight $k$ for the modular group $\Gamma$ as a real analytic section $\phi(\tau,\bar{\tau})(d\tau)^{k\over 2}$ of the line bundle $\mathcal{K}^{k\over 2}$ over the modular curve $\Gamma\backslash\mathbb{H}^{*}$, where $\mathcal{K}$ is the canonical bundle of the modular curve.
The "transition function" arising from the change of coordinate, which is also the "Jacobian", is exactly the prefactor. 
That is,
\begin{equation*}
\phi(\tau,\bar{\tau})(d\tau)^{k\over 2}=\mathscr{C}(\phi)(s,\bar{s})(ds)^{k\over 2}\,,
\end{equation*}
with
\begin{equation*}
 \mathscr{C}(\phi)(s,\bar{s})=(j_{s}(\tau(s)))^{k}\phi(\tau(s,\bar{s}),\bar{\tau}(s,\bar{s}))\,.
\end{equation*}
The power $k$ in the prefactor indicates the representation which defines the homogeneous line bundle $\mathcal{K}^{{k\over 2}}$.
The description as a section is not valid for a quasi-modular form, whose local expansion will be considered later via the structure theorem \cite{Kaneko:1995} between the ring of quasi-modular forms and the ring of almost-holomorphic modular forms.

\subsection{Invariance of Ramanujan identities}
\label{secinvarianceRamanujan}

As an immediate consequence of Definition \ref{dfnlocalexpansion}, we now show that the Ramanujan identities satisfied by the generators of the ring of almost-holomorphic modular forms are invariant under the Cayley transformation.\\

For later use we now recall the fundamental structure theorem in \cite{Kaneko:1995} between the rings of quasi-modular forms $\widetilde{M}(\Gamma)$ and almost-holomorphic modular forms $\widehat{M}(\Gamma)$ for a modular group $\Gamma<\mathrm{SL}_{2}(\mathbb{Z})$.
The latter has the structure given by $\widehat{M}(\Gamma)\cong \widetilde{M}(\Gamma)\otimes \mathbb{C}[Y(\tau,\bar{\tau})]$, where
\begin{equation}
 Y(\tau,\bar{\tau}):={1\over \mathrm{Im}\tau}\,.
\end{equation}
One can regard an almost-holomorphic modular form as a polynomial in the non-holomorphic quantity $Y$ (with coefficients being holomorphic quantities). Taking the degree zero term of an almost-holomorphic modular form gives a quasi-modular form. On the contrary, if one starts with a quasi-modular form, one can complete it to an almost-holomorphic modular form by adding polynomials in $Y$. The former map is called the \emph{constant term map} and the latter \emph{modular completion}.
These two maps give the isomorphism between the two rings.

Moreover, if one starts with the generators of the ring of quasi-modular forms, then since the ring is closed under 
the derivative $\partial_{\tau}:={1\over 2\pi i}{\partial \over \partial \tau}$, one gets different equations among these generators called \emph{Ramanujan identities}.
Now one replaces the generators by their modular completions, and the derivative $\partial_{\tau}$ by the raising operator 
\begin{equation}\label{eqncovariantderivative}
\widehat{\partial}_{\tau}:=\partial_{\tau}+{k\over 12} {-3\over \pi}Y(\tau,\bar{\tau})
\end{equation}
when acting on a quasi-modular form of weight $k$. 
This operator is the natural covariant derivative, corresponding to the Chern connection, on sections of $\mathcal{K}^{{k\over 2}}$
with respect to the trivialization $(d\tau)^{k\over 2}$.
It turns out that one gets the same system of equations, now satisfied by the generators of the ring of almost-holomorphic modular forms. \\

The above results will be used frequently throughout this work, hence we summarize them in the following diagram
\begin{figure}[h]
  \renewcommand{\arraystretch}{1.2} 
\begin{displaymath}
\xymatrixcolsep{5pc}\xymatrix{  \partial_{\tau}\curvearrowright \widetilde{M}(\Gamma) \ar@/^/[r]^{\textrm{modular completion}}&  \ar@/^/[l]^{\textrm{constant term map}}  \widehat{M}(\Gamma) \curvearrowleft  \widehat{\partial}_{\tau} }
\end{displaymath}
  \caption[Structure theorem]{Structure theorem between the rings of quasi-modular forms and almost-holomorphic modular forms}
  \label{figurestructuretheorem}
\end{figure}

We now study the differential equations among the generators in the ring $\mathscr{C}(\widehat{M}(\Gamma))$.
Recall the $j$-automorphy factor $j_{s}$ defined in \eqref{automorphyfactor}. We define the raising operator $\widehat{\partial}_{s}$ such that for an almost-holomorphic modular form $\phi$ of weight $k$ 
\begin{equation}
\widehat{\partial}_{s} \mathscr{C}(\phi)=
 \mathscr{C}(\widehat{\partial}_{\tau}\phi)
 \,.
\end{equation}
By construction, the right hand side is $j_{s}^{k+2}\widehat{\partial}_{\tau}\phi$.
Straightforward calculation shows that on the Cayley transformation $\mathscr{C}(\phi)$ of an almost-holomorphic modular form $\phi$  of weight $k$, one has
\begin{equation}
\widehat{\partial}_{s}=\partial_{s}+{k\over 12} (j_{s}^{2} {-3\over \pi \,\mathrm{Im}\tau} - 12 j_{s}\partial_{\tau}j_{s})
=\partial_{s}+{k\over 12}  {-3\over \pi }(j_{s}^{2} {1\over  \mathrm{Im}\tau} -2 \sqrt{-1} j_{s}{\partial \over \partial \tau}j_{s})\,.
\end{equation}

Similar to \eqref{eqncovariantderivative}, this operator is the covariant derivative on the same line bundle $\mathcal{K}^{{k\over 2}}$, but
with respect to the trivialization $(ds)^{k\over 2}$.
Note that due to the fact that $\gamma_{s}$ is not an element in $\mathrm{SL}_{2}(\mathbb{R})$, the quantity $(j_{s}^{2} {1\over  \mathrm{Im}\tau} -2 \sqrt{-1} j_{s}{\partial \over \partial \tau}j_{s})$ is not equal to $1/\mathrm{Im}\, s$. In fact, the above extra term one adds to $\partial_{s}$ is a multiple of
\begin{equation}\label{eqnYC}
\mathscr{C}(Y)(s,\bar{s}):=(j_{s}^{2} {1\over  \mathrm{Im}\tau} -2  \sqrt{-1} j_{s}{\partial \over \partial \tau}j_{s})
=2  \sqrt{-1} {{\bar{s}\over K^{2}} \over {1+{s\bar{s}\over K^{2}}}}\,, \quad 
 K=\bar{\tau}_{*}-\tau_{*}\,.
\end{equation}
Now by construction, we have the following proposition.
\begin{prop}[Invariance of Ramanujan identities for almost-holomorphic modular forms]\label{thminvariancealmostmodular}
Consider the differential relations 
\begin{equation}
\widehat{\partial_{\tau}}\phi_{a}=P_{a}(\phi_{1},\cdots)\,,\quad a=1,2,\cdots \,,
\end{equation}
where $\{\phi_{a}\,,\, a=1,2\cdots\}$ give the generators for the ring of almost-holomorphic modular forms and $P_{a}$ are polynomials giving the Ramanujan identities. Then these differential relations are satisfied, under the Cayley transformation, by the local expansions $\mathscr{C}(\phi_{a})$ of $\phi_{a}\,,\,a=1,2,\cdots $. That is, one has
\begin{equation}
\widehat{\partial_{s}} \,\mathscr{C}(\phi_{a}) =P_{a}(\mathscr{C}(\phi_{1}),\cdots)\,, \quad a=1,2,\cdots \,.
\end{equation}
\end{prop}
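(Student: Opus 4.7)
The plan is to reduce the proposition to two short observations: first, that the Cayley transformation $\mathscr{C}$ is a graded ring homomorphism on $\widehat{M}(\Gamma)$; and second, that $\widehat{\partial}_{s}$ was defined precisely so as to intertwine with $\widehat{\partial}_{\tau}$ under $\mathscr{C}$. The proof then becomes essentially tautological.

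The first step is to verify multiplicativity of $\mathscr{C}$ on homogeneous-weight forms. If $\phi_{i}$ has weight $k_{i}$, then $\phi_{1}\phi_{2}$ has weight $k_{1}+k_{2}$, and the prefactor in Definition \ref{dfnlocalexpansion} splits: $j_{s}^{k_{1}+k_{2}}=j_{s}^{k_{1}}\cdot j_{s}^{k_{2}}$. Consequently $\mathscr{C}(\phi_{1}\phi_{2})=\mathscr{C}(\phi_{1})\mathscr{C}(\phi_{2})$, and together with $\mathbb{C}$-linearity this gives $\mathscr{C}(Q(\phi_{1},\ldots))=Q(\mathscr{C}(\phi_{1}),\ldots)$ for any weighted-homogeneous polynomial $Q$ in the generators. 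Since $\widehat{\partial}_{\tau}$ raises weight by $2$, each Ramanujan polynomial $P_{a}$ is weighted-homogeneous of weight $k_{a}+2$, so in particular $\mathscr{C}(P_{a}(\phi_{1},\ldots))=P_{a}(\mathscr{C}(\phi_{1}),\ldots)$.

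The second step invokes the defining relation $\widehat{\partial}_{s}\mathscr{C}(\phi):=\mathscr{C}(\widehat{\partial}_{\tau}\phi)$ stated just above the proposition. Applied to $\phi_{a}$, this yields
\[
\widehat{\partial}_{s}\mathscr{C}(\phi_{a})=\mathscr{C}(\widehat{\partial}_{\tau}\phi_{a})=\mathscr{C}(P_{a}(\phi_{1},\ldots))=P_{a}(\mathscr{C}(\phi_{1}),\ldots),
\]
where the second equality uses the Ramanujan identity in $\widehat{M}(\Gamma)$ and the third uses the first step.

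The main (and really only) piece of concrete work sits in justifying the explicit formula for $\widehat{\partial}_{s}$ displayed before the proposition, namely that the non-holomorphic correction is exactly $(k/12)(-3/\pi)\mathscr{C}(Y)$ with $\mathscr{C}(Y)$ as in \eqref{eqnYC}. This is a short chain-rule computation starting from the explicit form $j_{s}=(1-s/(\tau_{*}-\bar{\tau}_{*}))^{-1}$, combined with the definition of $\widehat{\partial}_{\tau}$ in \eqref{eqncovariantderivative}. Conceptually, no obstacle arises at all: both $\widehat{\partial}_{\tau}$ and $\widehat{\partial}_{s}$ are the same Chern connection on the line bundle $\mathcal{K}^{k/2}$ over the modular curve, merely expressed in the two holomorphic trivializations $(d\tau)^{k/2}$ and $(ds)^{k/2}$, and the invariance of polynomial relations under change of trivialization is geometrically immediate.
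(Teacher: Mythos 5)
Your proposal is correct and follows essentially the same route as the paper: the paper likewise defines $\widehat{\partial}_{s}$ by the intertwining relation $\widehat{\partial}_{s}\mathscr{C}(\phi)=\mathscr{C}(\widehat{\partial}_{\tau}\phi)$ and then obtains the proposition "by construction," with the compatibility $\mathscr{C}(P_{a}(\phi_{1},\dots))=P_{a}(\mathscr{C}(\phi_{1}),\dots)$ implicit in the fact that $\mathscr{C}$ acts on a weight-$k$ form by multiplication by $j_{s}^{k}$ and the $P_{a}$ are weighted-homogeneous. You merely make the ring-homomorphism step explicit, which is a harmless (and welcome) elaboration.
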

This is of course nothing but a manifestation of the different descriptions of almost-holomorphic modular forms as functions on $\mathbb{H}$ or on $\mathbb{D}$ (with a rescaled radius due to the normalization of the Cayley transform we have taken).
The denominator in the quantity $\mathscr{C}(Y)$ should be regarded as the remnant of the hyperbolic metric \cite{Shimura:1987}.

\subsubsection{Holomorphic limit}

We shall need the notion of holomorphic limit for later discussions. 
This notion makes sense for all real analytic functions. See \cite{Bershadsky:1993cx, Kapranov:1999, Gerasimov:2004yx} and the more recent work \cite{Zhou:2013hpa} for details on this. 
In particular, for an almost-holomorphic modular form expanded near the infinity cusp, the holomorphic limit is given by the constant term map from
$\widehat{M}(\Gamma)$ to $\widetilde{M}(\Gamma)$
in the expansion of the almost-holomorphic modular form in terms of the formal variable $1/\mathrm{Im}\tau$.
This map can be thought of induced by the so-called holomorphic limit $\bar{\tau}\rightarrow \overline{i\infty}$. 
Around a point $\tau_{*}$ in the interior of the upper-half plane $\mathbb{H}$, one can also consider the holomorphic limit of the Cayley transformation $\mathscr{C}(\phi)$ of an almost-holomorphic modular form $\phi\in \widehat{M}(\Gamma)$, denoted by 
\begin{equation}
\mathrm{hlim}_{\bar{\tau}\rightarrow \bar{\tau}_{*}}\,\mathscr{C}(\phi)\,.
\end{equation}
The holomorphic limit at the point $\tau_{*}$ is induced by setting $\bar{\tau}\mapsto \bar{\tau}_{*}$, which is equivalent to the limit $\bar{s}\rightarrow 0$ according to \eqref{eqnlocaluniformizingvariable}.

By definition, the notion of holomorphic limit also depends on the base point $\tau_{*}$ and holomorphic limits of the same real analytic function at different points are in general not related by analytic continuation.
The quantity of particular importance in this work is the non-holomorphic function $Y=1/\mathrm{Im}\tau$. One has 
\begin{equation}
\mathrm{hlim}_{\bar{\tau}\rightarrow \overline{i\infty}}{1\over \mathrm{Im}\tau}=0\,,\quad \mathrm{hlim}_{\bar{\tau}\rightarrow \bar{\tau}_{*}}{1\over \mathrm{Im}\tau}={2\sqrt{-1}\over \tau-\bar{\tau}_{*}}\,.
\end{equation}
It is easy to see that
\begin{eqnarray}
\mathrm{hlim}_{\bar{\tau}\rightarrow \overline{i\infty}}\mathscr{C}(Y)(s,\bar{s})&=&\mathrm{hlim}_{\bar{s}\rightarrow  K}\mathscr{C}(Y)(s,\bar{s})=2\sqrt{-1}( {{1\over K}\over 1+{s\over K}})
\,,\nonumber\\
\mathrm{hlim}_{\bar{\tau}\rightarrow \bar{\tau}_{*}}\mathscr{C}(Y)(s,\bar{s})&=&\mathrm{hlim}_{\bar{s}\rightarrow 0}\mathscr{C}(Y)(s,\bar{s})=0\,.
\end{eqnarray}

Recall that the Ramanujan identities for the generators of the ring of quasi-modular forms (expanded around the infinity cup) can be obtained by applying the constant map on the differential equations for the generators of the ring of almost-holomorphic modular forms, as shown in Fig. \ref{figurestructuretheorem}. Computationally this follows from the structure theorem between these two rings and the fact that $\partial_{\tau}Y=cY^{2}$ for some constant $c$ and hence vanishes in the holomorphic limit.
More precisely, near the infinity cusp, the notions of almost-holomorphic modular forms and quasi-modular forms are such that the modular completion of a quasi-modular form $f$ of weight $k$ is given by an almost-holomorphic modular form as a polynomial in $Y$ \cite{Kaneko:1995}:
\begin{equation}\label{holomorphic-limit}
\phi(\tau,\bar{\tau})=f(\tau)+\sum_{m=1}^{[{k\over 2}]}f_{m}(\tau)Y^{m}\,.
\end{equation}

Note that $h=\mathscr{C}(Y)-j_{s}^{2}Y$ is a purely holomorphic quantity, we then define quasi-modular forms around $\tau_{*}$ to be the holomorphic limit at $\tau_{*}$
of the Cayley transformations of almost-holomorphic modular forms. Then we have a similar structure theorem that an almost-holomorphic modular form is the sum of its holomorphic limit and polynomials in $\mathscr{C}(Y)-h$. That is, the non-holomorphic part of $\mathscr{C}(\phi)$ is a polynomial (coefficients are holomorphic functions) in $\mathscr{C}(Y)-h$
or equivalently a polynomial in $\mathscr{C}(Y)$. The quantity $h$ measures the deviation between the two notions of holomorphic limits and hence of quasi-modular forms.
It is easy to see that the notion of quasi-modular form does not depend on the normalization for the Cayley transform since that of the holomorphic limit does not.
In fact, from \eqref{eqnYC} and the above definition of the holomorphic limit such that $\mathrm{hlim}_{\bar{\tau}\rightarrow \bar{\tau}_{*}}\mathscr{C}(Y)=0$, we can see that taking the holomorphic limit of 
the Cayley transformation $\mathscr{C}(\phi)=j_{s}^{k}\phi$ of the left hand side in \eqref{holomorphic-limit} amounts to replacing 
$j_{s}^2 Y=\mathscr{C}(Y)+h$ by $h=2  \sqrt{-1} j_{s}\partial_{\tau}j_{s}$ in the expansion of $j_{s}^{k}\phi$ on the right hand side.
This is nothing but formally (that is, as if $\tau\mapsto s$ lies in $\mathrm{SL}_{2}(\mathbb{Z})$) applying the transformation law to the quasi-modular form $f$.

Again the same computation as in the infinity cusp case says that one would get a system of differential equations satisfied by these holomorphic limits, with the ordinary derivative $\partial_{s}$. 
That is, although the notion of holomorphic limit varies when the base point moves, different holomorphic limits do obey the same differential equation system.
Hence we obtain the following proposition.

\begin{prop}[Invariance of Ramanujan identities for quasi-modular forms]\label{holomorphic-invariance}\label{thminvariancequasimodular}
Consider the differential relations 
\begin{equation}
\partial_{\tau}f_{a}=P_{a}(f_{1},\cdots) \,, \quad a=1,2,\cdots \,,
\end{equation}
where $\{f_{a}\,,\, a=1,2\cdots\}$ give the generators for the ring of quasi-modular forms and $P_{a}$ are polynomials giving the Ramanujan identities. 
Denote the modular completion of $f_{a}$ to be $\phi_{a}$. Then these differential relations are satisfied, under the Cayley transformation, by the holomorphic limits $\mathrm{hlim}\mathscr{C}(\phi_{a}) $ of the quantities $\mathscr{C}(\phi_{a})$, with the derivative replaced by $\partial_{s}$. That is, one has
\begin{equation}
\partial_{s}(\mathrm{hlim}\, \mathscr{C}(\phi_{a}) )=P_{a}(\mathrm{hlim}\,\mathscr{C}(\phi_{1}) \,,\cdots)\,, \quad a=1,2,\cdots \,.
\end{equation}
\end{prop}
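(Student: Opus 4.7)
The plan is to derive this statement directly from the almost-holomorphic version, Proposition \ref{thminvariancealmostmodular}, by passing to the holomorphic limit $\bar{s}\to 0$ on both sides.

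Concretely, I would begin by applying Proposition \ref{thminvariancealmostmodular} to the modular completions $\phi_a$ of the $f_a$, obtaining
\begin{equation*}
\widehat{\partial}_s \,\mathscr{C}(\phi_a) \;=\; P_a\bigl(\mathscr{C}(\phi_1),\mathscr{C}(\phi_2),\cdots\bigr)\,.
\end{equation*}
Recall that on a weight-$k$ almost-holomorphic modular form the raising operator in the $s$-coordinate is $\widehat{\partial}_s=\partial_s+\tfrac{k}{12}\cdot\tfrac{-3}{\pi}\,\mathscr{C}(Y)$, with $\mathscr{C}(Y)(s,\bar s)$ given explicitly in \eqref{eqnYC}. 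The key observation is that $\mathrm{hlim}_{\bar s\to 0}\mathscr{C}(Y)=0$, as recorded just before the proposition, so in the holomorphic limit the raising operator collapses to the ordinary derivative $\partial_s$.

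Next, I would verify the two commutation properties needed to push $\mathrm{hlim}$ through the identity. First, $\mathrm{hlim}$ commutes with the polynomial $P_a$, trivially, since polynomial operations preserve the limit. Second, $\mathrm{hlim}$ commutes with $\partial_s$: by the structure-theorem analogue in the $s$-coordinate recalled in the paragraph before the proposition, $\mathscr{C}(\phi_a)$ decomposes as $\mathrm{hlim}\,\mathscr{C}(\phi_a)$ plus a polynomial in $\mathscr{C}(Y)$ (equivalently in $\mathscr{C}(Y)-h$) with holomorphic coefficients, so setting $\bar s=0$ commutes with the holomorphic derivative $\partial_s$ because $\partial_s$ and $\bar s$ are independent variables. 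Applying $\mathrm{hlim}_{\bar s\to 0}$ to both sides of the almost-holomorphic identity and using these two facts, together with $\mathrm{hlim}\,\mathscr{C}(Y)=0$ to kill the correction term in $\widehat{\partial}_s$, yields exactly
\begin{equation*}
\partial_s\bigl(\mathrm{hlim}\,\mathscr{C}(\phi_a)\bigr)\;=\;P_a\bigl(\mathrm{hlim}\,\mathscr{C}(\phi_1),\cdots\bigr)\,,
\end{equation*}
which is the desired identity.

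The only delicate point, and the one I would write out carefully, is the precise structure-theorem statement in the $s$-coordinate: although $\mathscr{C}(Y)$ is not equal to $1/\mathrm{Im}\,s$ (since $\gamma_s\notin\mathrm{SL}_2(\mathbb{R})$), the difference $h=\mathscr{C}(Y)-j_s^2 Y$ is purely holomorphic, so $\mathscr{C}$ of the $\mathbb{C}[Y]$-module generated by $\widetilde{M}(\Gamma)$ is still a polynomial ring in $\mathscr{C}(Y)$ over holomorphic functions in $s$. Once this is recorded, the notion of holomorphic limit at $\tau_*$ is unambiguous and the claim reduces to the elementary commutation facts above. Everything else is formal.
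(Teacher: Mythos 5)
Your proposal is correct and follows essentially the same route as the paper: apply the almost-holomorphic invariance (Proposition \ref{thminvariancealmostmodular}), use the holomorphy of $h=\mathscr{C}(Y)-j_s^2Y$ to get the structure theorem in the $s$-coordinate, and pass to the holomorphic limit $\bar s\to 0$, where $\mathscr{C}(Y)$ vanishes and so the raising operator $\widehat{\partial}_s$ collapses to $\partial_s$. The commutation of $\mathrm{hlim}$ with $\partial_s$ and with the polynomials $P_a$ is exactly the "same computation as in the infinity cusp case" that the paper invokes, so nothing is missing.
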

Henceforward, we shall denote $\mathrm{hlim}\,\mathscr{C}(\phi)$ by $\mathscr{C}_{\mathrm{hol}}(f)$ if the non-holomorphic modular form $\phi$ is the modular completion at the infinity cusp of the quasi-modular form $f$.
If $f$ is a holomorphic modular form, then by construction $\mathscr{C}_{\mathrm{hol}}(f)=\mathscr{C}(f)$.

\begin{ex}
[Full modular group case] Consider the case $\Gamma=\mathrm{SL}_{2}(\mathbb{Z})$, then the ring of modular forms $M(\Gamma)$, quasi-modular forms $\widetilde{M}(\Gamma)$ and almost-holomorphic modular forms $\widehat{M}(\Gamma)$ are given by
\begin{equation*}
M(\Gamma)=\mathbb{C}[E_{4},E_{6}]\,,\quad 
\widetilde{M}(\Gamma)=\mathbb{C}[E_{4},E_{6}][E_{2}]\,,\quad \widehat{M}(\Gamma)=\mathbb{C}[E_{4},E_{6}][\hat{E}_{2}]\,,
\end{equation*}
where
\begin{equation*}
\hat{E}_{2}(\tau,\bar{\tau})=E_{2}(\tau)+{-3\over \pi}{1\over \mathrm{Im}\tau}\,.
\end{equation*}
Around an interior point $\tau_{*}$, we again choose the Cayley transform to be the map in \eqref{eqnlocaluniformizingvariable}.
Now the equivalent descriptions of the ring of modular forms and almost-holomorphic modular forms around this point are obtained via the Cayley transformation as 
\begin{eqnarray*}
\mathscr{C}(M(\Gamma))&=&\mathbb{C} [j_{s}^{4} E_{4}(\tau(s)),j_{s}^{6} E_{6}(\tau(s))]\,,\\
\mathscr{C}(\widehat{M}(\Gamma))&=&\mathbb{C}[j_{s}^{4}E_{4}(\tau(s)),j_{s}^{6} E_{6}(\tau(s))][j_{s}^{2} \hat{E}_{2}(\tau(s,\bar{s}),\bar{\tau}(s,\bar{s}))]\,.
\end{eqnarray*}
However, the ring generated by the holomorphic limits of the generators of the ring $\widehat{M}(\Gamma)$ is
\begin{equation*}
\mathscr{C}_{\mathrm{hol}}(\widetilde{M}(\Gamma))=\mathbb{C}[j_{s}^4 E_{4}(\tau(s)), j_{s}^{6} E_{6}(\tau(s))][
\mathscr{C}_{\mathrm{hol}}(E_{2})]\,,
\end{equation*}
where
\begin{equation*}
\mathscr{C}_{\mathrm{hol}}(E_{2})=\mathrm{hlim}_{\bar{s}\rightarrow 0} \left(j_{s}^{2}\hat{E}_{2}(\tau(s,\bar{s}) ,\bar{\tau}(s,\bar{s}))\right)
=j_{s}^{2}E_{2}(\tau(s,\bar{s}))+j_{s}^{2}{-3\over \pi}{2\sqrt{-1}\over \tau(s)-\bar{\tau}_{*}}\,.
\end{equation*}
The differential equations take the same form as the classical Ramanujan identities among the Eisenstein series $E_{2}(\tau),E_{4}(\tau),E_{6}(\tau)$.
\end{ex}

The results in Proposition \ref{thminvariancealmostmodular} and Proposition \ref{thminvariancequasimodular} can be summarized into the commutative diagram in Fig. \ref{figureinvarianceRamanujan}. Here as before the notations $\mathcal{O}, C^{\omega}$ mean the ring of holomorphic and real analytic functions, respectively.
In particular, the Cayley transformation preserves the differential ring structure.
Again this is a manifestation of the gauge invariance of the Chern connection
on the Hermitian line bundle $\mathcal{K}^{{k\over 2}}$.
\begin{figure}[h]
  \renewcommand{\arraystretch}{1.2} 
\begin{displaymath}
\xymatrixcolsep{5pc}\xymatrixrowsep{5pc}\xymatrix{  \partial_{\tau}\curvearrowright \widetilde{M}(\Gamma) \subseteq \mathcal{O}_{\mathbb{H}}
\ar@/^/[r]^{\textrm{modular completion}}\ar@{.>}[d]^{\mathscr{C}_{\mathrm{hol}}}   
 &  \ar@/^/[l]^{\textrm{constant term map}}  \widehat{M}(\Gamma)\subseteq  C^{\omega}_{\mathbb{H}} \ar[d]^{\mathscr{C}} \curvearrowleft  \widehat{\partial}_{\tau}\\
 \partial_{s}\curvearrowright \mathscr{C}_{\mathrm{hol}}(\widetilde{M}(\Gamma)) \subseteq \mathcal{O}_{\mathbb{D}}\ar@/^/[r]^{\textrm{modular completion}}&  \ar@/^/[l]^{\textrm{holomorphic limit}}  \mathscr{C}(\widehat{M}(\Gamma) )\subseteq  C^{\omega}_{\mathbb{D}}\curvearrowleft  \widehat{\partial}_{s}
 }
\end{displaymath}
  \caption[InvarianceofRamanujan]{Invariance of Ramanujan identities}
  \label{figureinvarianceRamanujan}
\end{figure}

The whole discussion above applies to a general modular group $\Gamma<\mathrm{SL}_{2}(\mathbb{Z})$.
In this work, we shall only consider the quasi-modular forms for $\Gamma_{0}(N),N=2,3,4$ which appear in Tab. \ref{tablequasimodularity}.\\

In principle, knowing: (a). around which point $\tau_{*}$ one should study the local expansions;
(b). the Ramanujan identities; and (c). some boundary conditions which can be calculated straightforwardly, are enough for the purpose of proving the LG/CY correspondence in Section \ref{secLGCYcorrespondence}.
However, in the following section we shall give a detailed study on the computations of the local expansions by taking advantage of the fact that the generators of the ring of modular forms are closely related to period integrals of certain elliptic curve families and hence satisfy simple differential equations. 
We hope this will be inspiring in understanding the general picture of the global properties of correlation functions in the enumerative theories since in most of the examples studied in the literature on LG/CY correspondence, it is usually unknown what the underlying modularity should be and what one can use are differential equations similar to what we have here. In the course we shall also explain the following important issues:
\begin{itemize}
\item To identity the base point $\tau_{*}$ needed for the purpose of matching the enumerative theories in consideration. This point is to be singled out as a singularity of the Picard-Fuchs equation
around which the monodromy action is diagonal in a particular basis of solutions. This singularity is called an orbifold singularity.

\item To motivate the definition of the normalized local uniformization variable \eqref{eqnlocaluniformizingvariable} from the perspective of  monodromy consideration of the Picard-Fuchs equation and also of the Weil-Petersson geometry on the moduli space.

\item To obtain close-form expressions for the Cayley transformations of the quasi-modular forms in terms of hypergeometric series.
These results offers an alternative way to match \cite{Krawitz:2011} the prepotentials of the FJRW theories with those of the simple elliptic singularities \cite{Noumi:1998}.

\end{itemize}

\subsection{Gauss hypergeometric equations for modular forms and their local expansions}
\label{secperiodcalculation}

We shall study the analytic continuations and elliptic expansions of the modular forms in consideration, with the help of the elliptic curve families described in Tab. \ref{tablequasimodularity}.
The modular forms are closely related to the periods of these elliptic curve families which satisfy differential equations known as the Picard-Fuchs equations.
These elliptic curve families can be thought of as computational devices or auxiliary objects in the sense that
the facts about modular forms that we are going to discuss are independent of the specific elliptic curve family that we have chosen. 

For simplicity, we take the$\mathbb{P}^{1}_{3,3,3}$ case for example. Similar discussions apply to the elliptic orbifold curves $\mathbb{P}^{1}_{4,4,2}, \mathbb{P}^{1}_{6,3,2}$. The pillowcase orbifold 
$\mathbb{P}^{1}_{2,2,2,2}$ will be related to the $\mathbb{P}^{1}_{4,4,2}$ case since according to the computations in \cite{Shen:2014} the correlators involved in the enumerative theory that 
we are interested in are actually quasi-modular forms for $\Gamma_{0}(2)$. \\

The elliptic curve family in this case is given by the Hesse pencil
\begin{equation}
x_{1}^{3}+x_{2}^{3}+x_{3}^{3}-3\alpha^{-{1\over 3}}x_{1}x_{2}x_{3}=0\,, \quad j(\alpha)={27(1+8\alpha)^{3}\over \alpha(1-\alpha)^{3}}\,,
\end{equation}
defined on the modular curve $\Gamma_{0}(3)\backslash \mathbb{H}^{*}$ which is parametrized by the Hauptmodul $\alpha$.
Hereafter we shall fix a particular normalization of $\alpha$ 
following \cite{Maier:2009} given by
\begin{equation}\label{eqnHauptmodul}
\alpha(\tau)={({3\eta^{3}(3\tau)\over \eta(\tau)})^{3} \over ({3\eta^{3}(3\tau)\over \eta(\tau)})^{3}+({\eta(\tau)^{3}\over \eta(3\tau)})^{3} }\,.
\end{equation}
The base of the family is an orbifold whose coarse moduli is $\mathbb{P}^{1}$, with three orbifold points of indices $\infty,\infty,3$.

For the modular curve $\Gamma_{0}(3)\backslash \mathbb{H}^{*}$, the cusps are 
$[\tau]=[i\infty],[0]$, the orbifold point is $[\tau]=[ST^{-1}(\zeta_{3})]=-\kappa \zeta_{3}$, where 
$S,T$ are the generators for the full modular group $\mathrm{SL}_{2}(\mathbb{Z})$ and
\begin{equation}
\kappa:={i\over \sqrt{3}}\,, \quad  \zeta_{3}=\exp({2\pi i  \over 3})\,.
\end{equation}
They correspond to singularities 
$\alpha=0,1,\infty$ of the elliptic curve family, respectively.  
The generators for the ring of modular forms can be nicely constructed from the triple 
$A_{3},B_{3},C_{3}$ in \cite {Borwein:1991, Borwein:1994, Maier:2009}.
To make the paper self-contained, we recall these quantities
\begin{equation}
A_{3}(\tau)=\theta_{2}(2\tau)\theta_{2}(6\tau)+\theta_{3}(2\tau)\theta_{3}(6\tau)\,, \quad B_{3}(\tau)={3\eta(3\tau)^{3}\over \eta(\tau)}\,,\quad  C_{3}(\tau)={\eta(\tau)^{3}\over \eta(3\tau)}\,.
\end{equation}
Here we have chosen the convention in \cite{Zagier:2008} for the $\theta$--constants.
They satisfy the relation
\begin{equation}
A_{3}(\tau)^{3}=B_{3}(\tau)^{3}+C_{3}(\tau)^{3}\,.
\end{equation}
We also choose the following quantity as a generator for the ring of quasi-modular forms,
\begin{equation}
E_{3}(\tau):={3E_{2}(3\tau)+E_{2}(\tau)\over 3}\,.
\end{equation}
This generator $E_{3}$ can be completed to an almost-holomorphic modular form
\begin{equation}
\hat{E}_{3}(\tau,\bar{\tau})=E_{3}(\tau)+{1\over 2}{-3\over \pi }{1\over \mathrm{Im} \tau}\,.
\end{equation}
These are the modular forms whose global properties on the moduli space and whose local expansions in different regions in the moduli space that we want to study in this work.

Note that the equivalence class of the elliptic point $[\tau]$ has many representatives which are related by transformations in $\Gamma_{0}(3)$, the representative
$ST^{-1}(\zeta_{3})$ is singled out according to the normalization of the Hauptmodul we have fixed.
This can also be seen later by using analytic continuation of periods, see Remark \ref{remDeck}.

The explicit Ramanujan identities satisfied by the generators $A_{3},B_{3},C_{3},E_{3}$ are given in \eqref{ramanujan} by \cite{Zhou:2013hpa} (which differs from the ones used in \cite{Maier:2009} due to the different choice for $E_{3}$ and makes the equations more systematic). 

\subsubsection{Local uniformizing parameter near elliptic point obtained from monodromy}

The Picard-Fuchs operator for the Hesse pencil is
\begin{equation}\label{eqPFaroundinfinity}
\mathcal{L}=\theta_{\alpha}^{2}-\alpha(\theta_{\alpha}+{1\over 3})(\theta_{\alpha}+{2\over 3})\,, \quad \theta_{\alpha}:=\alpha{\partial \over \partial \alpha}\,.
\end{equation}
This is the Gauss hypergeometric differential operator with $a=1/3, b=2/3,c=1$.
The corresponding Picard-Fuchs equation has three singularities located at
$\alpha=0,1,\infty$.

The triple $A_{3}(\tau),B_{3}(\tau),C_{3}(\tau)$ are related to the periods of this elliptic curve family in the following way \cite{Borwein:1991, Borwein:1994, Maier:2009}
\begin{equation}
A_{3}(\tau)=F(\alpha(\tau))\,,\quad B_{3}(\tau)=(1-\alpha(\tau))^{1\over 3}F(\alpha(\tau))\,,\quad C_{3}(\tau)=\alpha(\tau)^{1\over 3}F(\alpha(\tau))\,,
\end{equation}
where
$F(\alpha)=\,_{2}F_{1}(a,b;c;\alpha)$ is a solution to the Picard-Fuchs equation and $\alpha(\tau)$ is given by \eqref{eqnHauptmodul}.
The quasi-modular form $E_{3}(\tau)$ can be related to the periods according to
\begin{equation}
\partial_{\tau}\log C_{3}(\tau)={1\over 6}(E_{3}(\tau)+A_{3}(\tau)^{2})\,,
\end{equation}
The Schwarz relation is also useful
\begin{equation}
\partial_{\tau}\alpha(\tau)=\alpha(\tau)(1-\alpha(\tau))F(\alpha(\tau))^{2}\,.
\end{equation}
Similar results hold for the other modular groups $\Gamma_{0}(N),N=2,4$.
We refer the interested readers to 
\cite{Maier:2009, Maier:2011, Zhou:2013hpa}
for further details.

According to the discussion in Appendix \ref{appendixB},
consideration of monodromy leads to the following natural local uniformizing parameter \eqref{eqlocaluniformizingellipticpoint} near the orbifold singularity $\alpha=\infty$
\begin{equation}\label{eqnflatcoordinate}
\tau_{\mathrm{orb}}= {\tau-\tau_{*}\over { \tau\over \tau_{*}-\bar{\tau}_{*}}-{\bar{\tau}_{*}\over \tau_{*}-\bar{\tau}_{*}} }\,,\quad \tau_{*}=-{i\over \sqrt{3}}\exp({2\pi i\over 3})\,.
\end{equation}
Here the normalization is fixed by requiring that the transformation $\tau\mapsto \tau_{\mathrm{orb}}$ belongs to $\mathrm{SL}_{2}(\mathbb{R})=\mathrm{Sp}_{1}(\mathbb{R})$. It comes from the intuition that the action of the connection matrix on the correlation functions discussed later should be induced by a change of polarization in the context of Givental's formalism \cite{Givental:20011}. 
This local coordinate is exactly the normalized Cayley transform $\mathcal{C}(\tau)$ given in \eqref{eqnlocaluniformizingvariable}.
\begin{rem}
Similarly, one can define a local coordinate around any point $\tau_{*}$ according to the first equation in \eqref{eqnflatcoordinate}.
This coordinate coincides with the flat coordinate in \cite{Bershadsky:1993cx, Gerasimov:2004yx} by considering deformations in the Kodaira-Spencer theory, and the K\"ahler normal coordinate $t_{\mathrm{metric}}$ with respect to the Poincar\'e metric or equivalently Weil-Petersson metric on the upper-half plane, as computed in \cite{Zhou:2013hpa}.
See also \cite{Kapranov:1999} for a nice account of these discussions.
In particular, it is well defined around the infinity cusp $\tau_{*}=i\infty$. 
The normalization in this context is chosen such that to first order, the K\"ahler metric in the K\"ahler normal coordinate is represented by the identity matrix.
For this reason,  $t_{\mathrm{metric}}$ is sometimes called the flat coordinate.

Therefore, the flat coordinate/K\"ahler normal coordinate $t_{\mathrm{metric}}$, the
normalized period $\tau_{\mathrm{orb}}$ and the normalized Cayley transform $\mathcal{C}(\tau)$ are all the same.
This explains why the enumerative expansion, which usually appears in the expansion according to the flat coordinate/K\"ahler normal coordinate, naturally singles out the normalized local uniformizing variable/Cayley transform.
\end{rem}
\begin{rem}\label{remDeck}
According to the calculation in Appendix \ref{appendixB}, at the orbifold point $\tau_{\mathrm{orb}}=0$, we have
$\tau=-\kappa \zeta_{3}$.
Hence indeed the elliptic point is presented by $\tau=ST^{-1}(\zeta_{3})$.
This is why in the coordinate $\tau$, the singular points, as $\Gamma_{0}(3)$-equivalence classes, are represented by
$\tau=i\infty, 0, -\kappa \zeta_{3}$.

If one insists that the representative of the $\Gamma_{0}(3)$-elliptic point is, say, $\zeta_{3}$, then one needs to
apply the Deck transformation: $\tilde{\tau}=(ST^{-1})^{-1}\tau$. Hence in the $\tilde{\tau}$ coordinate the points $\alpha=0, 1, \infty$
are not given by $\tilde{\tau}=i\infty, 0, ST^{-1}(\zeta_{3})$ anymore, but by
$\tilde{\tau}=1, i\infty, \zeta_{3}$.
This normalization is not as convenient as the previous one since in the physics motivation, see e.g., 
\cite{Witten:1993, Seiberg:1994rs},
the point $\alpha=0$
which represents the large complex structure limit
should be in the weak coupling region $\tau\sim i\infty$.
\end{rem}

\subsubsection{Local expansions around the elliptic point}

The local expansion near the cusp $[\tau]=[0]$ is of interest in some other context \cite{Alim:2013eja} and will not be discussed here. We now discuss the expansion near the elliptic point.

We first consider the modular form $A_{3}(\tau)$
which is represented through the hypergeometric series $\pi_{1}(\alpha)=\,_{2}F_{1}(a,b;c;\alpha)$ near $\alpha=0$.
Here and in the rest of the work, we shall use the convention in \cite{Erdelyi:1981} for the solutions $\pi_{1},\pi_{2},\cdots$ to the hypergeometric equation.
The detailed expressions for them are recalled in Appendix \ref{appendixB}.

According to Definition \ref{dfnlocalexpansion}, the local expansion of near $\tau_{*}=-\kappa \zeta_{3}$ or equivalently $\tau_{\mathrm{orb}}=0$ is
\begin{equation*}
\mathscr{C}(A_{3})(\tau_{\mathrm{orb}})=({1\over 1+K^{-1}\tau_{\mathrm{orb}}})\pi_{1}(\alpha (\tau_{\mathrm{orb}}) )\,,\quad  K=\bar{\tau}_{*}-\tau_{*}\,.
\end{equation*}
Using the results on analytic continuation of periods in Appendix \ref{appendixB}, we have
\begin{equation*}
\pi_{1}(\alpha(\tau_{\mathrm{orb}}))=\tilde{\pi}_{3}(\alpha(\tau_{\mathrm{orb}}))+\tilde{\pi}_{4}(\alpha(\tau_{\mathrm{orb}}))
=\tilde{\pi}_{3}(\alpha(\tau_{\mathrm{orb}})) (1+K^{-1}\tau_{\mathrm{orb}})\,.
\end{equation*}
It follows that
\begin{equation}
\mathscr{C}(A_{3})(\tau_{\mathrm{orb}})=\tilde{\pi}_{3}(\alpha(\tau_{\mathrm{orb}}))\,.
\end{equation}
The quantity $\tilde{\pi}_{3}$ is most conveniently written as a hypergeometric series, up to the factor $(-\alpha)^{-{1\over 3}}$, in
$
\psi:=\alpha^{-{1\over 3}}.
$
Therefore, for computational purpose, we only need to find $\psi(\tau_{\mathrm{orb}})$.
Rewriting the Picard-Fuchs equation as
\begin{equation}\label{eqPFnearorbifold}
\left((\theta_{\psi}-1)(\theta_{\psi}-2)-\psi^{3} \theta_{\psi}^{2}\right) \tilde{\pi}_{3},\tilde{\pi}_{4}=0\,, \quad \theta_{\psi}=\psi {\partial \over \partial \psi}\,,
\end{equation}
we can see that the ratio of the two solutions to this differential equation, namely the coordinate $\tau_{\mathrm{orb}}$, satisfies the Schwarzian
\begin{equation}
\{\tau_{\mathrm{orb}},\psi\}:={\partial_{\psi}^{3} \tau_{\mathrm{orb}}\over \partial_{\psi} \tau_{\mathrm{orb}}}-{3\over 2}\left({\partial_{\psi}^{2} \tau_{\mathrm{orb}} \over \partial_{\psi} \tau_{\mathrm{orb}}}\right)^{2}=2Q(\psi)\,.
\end{equation} 
Here the $Q$-value $Q(\psi)$ is the rational function 
\begin{equation*}
\quad Q(\psi)=
{\psi (8+\psi^{3}) \over 2(1-\psi^{3})^{2}}\,.
\end{equation*}
Now using the property
\begin{equation}\label{eqSchwarzian}
\{\psi, \tau_{\mathrm{orb}}\}=-(\partial_{\tau_{\mathrm{orb}}}\psi)^{2} \{\tau_{\mathrm{orb}},\psi \}=-(\partial_{\tau_{\mathrm{orb}}}\psi)^{2} Q(\psi)\,,
\end{equation}
one gets a differential equation satisfied by $\psi(\tau_{\mathrm{orb}})$.
With the obvious boundary conditions, one can 
solve for $\psi(\tau_{\mathrm{orb}})$.
Plugging this into $\tilde{\pi}_{3}(\psi(\tau_{\mathrm{orb}}))$, one obtains the local expansion of $\mathscr{C}(A_{3})$.
Similarly, one can get the local expansion of the Hauptmodul $\alpha$ by using $\mathscr{C}(\alpha)(\tau_{\mathrm{orb}})=(\psi(\tau_{\mathrm{orb}}))^{-3}$.
In this way, we can obtain the local expansions of the modular forms
$A_{3},C_{3}=\alpha^{1\over 3}A_{3}$ near the elliptic point $\tau_{*}$.
\begin{rem}
If one works out the Schwarzian equations satisfied by $\tau(\alpha)$ and $\tau_{\mathrm{orb}}(\alpha)$, one can see that the $Q$-values are the same as they should be so since by construction $\tau,\tau_{\mathrm{orb}}$ are related by a factional linear transform and hence satisfy the same Schwarzian equation. 
The Schwarzian equations imply the following identities
\begin{equation}\label{eqnGSrelation}
\partial_{\tau}\alpha=\alpha(1-\alpha) \pi_{1}^{2}\,,
\end{equation}
\begin{equation}\label{eqnSchwarzian}
\partial_{\tau_{\mathrm{orb}}}\psi=c_{0} \psi (1-27\psi^{-3})\tilde{\pi}_{3}^{2}\,.
\end{equation}
Here $c_{0}=-81$ could be fixed by using boundary conditions.
Again it is easy to see that these two equations are related by a factional linear transform and are equivalent.  
Now instead of plugging the transcendental series $\psi(\tau_{\mathrm{orb}})$ solved from \eqref{eqSchwarzian} into the transcendental series $\tilde{\pi}_{3}(\psi)$ solved from \eqref{eqPFnearorbifold} , we could obtain
$\tilde{\pi}_{3}(\psi(\tau_{\mathrm{orb}}))$ by using the series  $\psi(\tau_{\mathrm{orb}})$ solved from \eqref{eqSchwarzian} and then plugging it into \eqref{eqnSchwarzian}. 

Another way to obtain the series expansion of $\tilde{\pi}_{3}(\tau_{\mathrm{orb}})$ is to derive an equation satisfied by $\tilde{\pi}_{3}(\psi(\tau_{\mathrm{orb}}))$. 
Straightforward calculation using \eqref{eqPFaroundinfinity}, \eqref{eqnGSrelation} gives the following ODE satisfied by $A_{3}$:
\begin{equation*}
-8(A_{3}')^{3}A_{3}'''+2A_{3} A_{3}' A_{3}'' A_{3}'''-(A_{3}')^2 (9A_{3}''A_{3}''+2A_{3} A_{3}'''')
+A_{3}^2(-A_{3}'''A_{3}'''+A_{3}''A_{3}'''')=0\,.
\end{equation*}
Here $'$ means the derivative $\partial_{\tau}={1\over 2\pi i}{\partial \over \partial \tau}$.
Since the equations \eqref{eqPFaroundinfinity}, \eqref{eqnGSrelation} are invariant under the fractional linear transform $\tau\mapsto \tau_{\mathrm{orb}}$ due to the definition of local expansions and the invariance of Ramanujan identities, the above equation for $A_{3}(\tau)$ also gives an equation satisfied by $\mathscr{C}(A_{3})(\tau_{\mathrm{orb}})$.
\end{rem}

We now consider the local expansions of quasi-modular forms, more specifically the quasi-modular form $E_{3}$, around the elliptic point.
Recall that the local expansions of $\mathscr{C}(A_{3}),\mathscr{C}(B_{3}),\mathscr{C}(C_{3})$ and $\mathscr{C}_{\mathrm{hol}}(E_{3})$ satisfy an ODE system 
which takes the form in \eqref{ramanujan} according to the structure theorem illustrated in Fig. \ref{figureinvarianceRamanujan}, 
with the derivative $\partial_{\tau}$ replaced by $\partial_{\tau_{\mathrm{orb}}}$. 
Hence the local expansion $\mathscr{C}_{\mathrm{hol}}(E_{3})$ is determined from
\begin{equation}
\partial_{\tau_{\mathrm{orb}}} \mathscr{C}(C_{3}) =
{1\over 6}\mathscr{C}(C_{3})(\mathscr{C}_{\mathrm{hol}}(E_{3})+\mathscr{C}(A_{3})^2)\,.
\end{equation}

\subsubsection{Change of normalization and rational local expansions}

Now we compute the explicit expansions $\mathscr{C}(A_{3}),\mathscr{C}(C_{3}),\mathscr{C}_{\mathrm{hol}}(E_{3})$
in the flat coordinate/K\"ahler normal coordinate/local uniformizing variable/Cayley transform $\tau_{\mathrm{orb}}$ given in \eqref{eqnormalizedperiodellipticpoint} in terms of the normalized period
\begin{equation}
\tau_{\mathrm{orb}}=K {\gamma_{-}\over \gamma_{+}}{\pi_{4}\over \pi_{3}}\,,\quad \gamma_{+}={\Gamma({1\over 3})\over\Gamma({2\over 3})^2}\,,\quad
\gamma_{-}={\Gamma(-{1\over 3})\over\Gamma({1\over 3})^2}\,,
\quad
K=\bar{\tau}_{*}-\tau_{*}\,.
\end{equation}
The first few terms of the results (can be obtained by using Mathematica for example) are
\begin{eqnarray*}
\mathscr{C}(A_{3})&=&\gamma_{+} \pi_{3}=\gamma_{+} (-\alpha)^{-{1\over 3}}\,_{2}F_{1}({1\over 3},{1\over 3};{2\over 3};\alpha^{-1})\,,\\
&=&\gamma_{+}({\gamma_{+}\over K \gamma_{-}}\tau_{\mathrm{orb}})\left(1+\mathcal{O}(({\gamma_{+}\over K \gamma_{-}}\tau_{\mathrm{orb}})^{6})\right)\,,\\
\mathscr{C}(C_{3})&=& \alpha^{1\over 3}\mathscr{C}(A_{3})=\gamma_{+}e^{-\pi i a}\,_{2}F_{1}({1\over 3},{1\over 3};{2\over 3};\alpha^{-1})\,\\
&=&\gamma_{+} e^{-\pi i a}\left(1-{1\over 6} ({\gamma_{+}\over K \gamma_{-}}\tau_{\mathrm{orb}})^{3}+\mathcal{O}(({\gamma_{+}\over K \gamma_{-}}\tau_{\mathrm{orb}})^{6})\right)\,,\\
\mathscr{C}_{\mathrm{hol}}(E_{3})&=& 6\,\partial_{\tau_{\mathrm{orb}}}\log \mathscr{C}(C_{3})-\mathscr{C}(A_{3}^2)\,,\\
&=&\gamma_{+}^{2} ({\gamma_{+}\over K \gamma_{-}}\tau_{\mathrm{orb}})^5 \left({1\over 10}+\mathcal{O}
(({\gamma_{+}\over K \gamma_{-}}\tau_{\mathrm{orb}})^6)\right)\,.
\end{eqnarray*}
Here we can also use \eqref{eqnSchwarzian} to simplify the last expression:
\begin{equation*}\label{eqnCayleyCintermsofperiods}
\mathscr{C}_{\mathrm{hol}}(E_{3})= 6{\partial_{\alpha^{-1}}\log \mathscr{C}(C_{3})\over \partial_{\alpha^{-1}}\tau_{\mathrm{orb}}}-\mathscr{C}(A_{3}^2)=-6 (\alpha^{-1}-1)\mathscr{C}(A_{3}^2)\partial_{\alpha^{-1}}\log \mathscr{C}(C_{3})-
\mathscr{C}(A_{3}^2) \,.
\end{equation*}
The coefficients of the local expansions involve Gamma-values. Up to the prefactors, these local expansions are actually series in the parameter ${\gamma_{+}\over K \gamma_{-}}\tau_{\mathrm{orb}}$ with rational coefficients.

For the later purpose of matching the rational enumerative invariants with the local expansions, it is natural to apply the following type of change of variable

\begin{equation}\label{eqchangofnormalization}
\tau_{\mathrm{orb}}\mapsto M\tau_{\mathrm{orb}}={M^{1\over 2}\tau_{\mathrm{orb}} \over M^{-{1\over 2}}}\,,
\end{equation}
 inducing the transformation on the local expansion
\begin{equation}\label{eqCayleyfromnormalization}
\mathscr{C}(\phi)\mapsto (M^{-{1\over 2}})^{k} \mathscr{C}(\phi)\,.
\end{equation}
Hence to cancel the prefactors in the local expansions we are led to
\begin{equation} \label{eqrationalparameter}
\tau_{\mathrm{orb}}\mapsto \tilde{\tau}_{\mathrm{orb}}= M_{\mathrm{rational}}\tau_{\mathrm{orb}}\,, \quad  M_{\mathrm{rational}}^{-{1\over 2}}=(\gamma_{+}e^{- i \pi a})^{-1}\,.
\end{equation}
That is, 
\begin{equation}
\tau_{\mathrm{orb}}\mapsto \tilde{\tau}_{\mathrm{orb}}= M_{\mathrm{rational}}\tau_{\mathrm{orb}}=
e^{-2\pi i a} \gamma_{+}\gamma_{-} K {\pi_{4}\over \pi_{3}}
={1\over 2\pi i} (-3)e^{- 2\pi i a}{\pi_{4}\over \pi_{3}}
\,.
\end{equation}
This would also simultaneously give the following nice relation
\begin{equation*}
{\gamma_{+}\over K \gamma_{-}}\tau_{\mathrm{orb}}=-{1\over 3}  e^{2\pi i a}(2\pi i \tilde{\tau}_{\mathrm{orb}})\,.
\end{equation*}
Now in the Ramanujan identities, the derivative becomes
$\partial_{\tilde{\tau}_{\mathrm{orb}}}:={1\over 2\pi i}{\partial \over \partial \tilde{\tau}_{\mathrm{orb}}}$.
In the new local coordinate $\tilde{\tau}_{\mathrm{orb}}$, we get
\begin{eqnarray}\label{333-initial}
\mathscr{C}(A_{3})&=& \alpha^{-{1\over 3}}\,_{2}F_{1}({1\over 3},{1\over 3};{2\over 3};\alpha^{-1})\,,\nonumber\\
&=&
{1\over 3} (2\pi i \tilde{\tau}_{\mathrm{orb}})
\left(1+\mathcal{O}((2\pi i \tilde{\tau}_{\mathrm{orb}})^{6})\right)\,,\nonumber\\
\mathscr{C}(C_{3})&=& \,_{2}F_{1}({1\over 3},{1\over 3};{2\over 3};\alpha^{-1})\,\nonumber\\
&=&
1+{1\over 6\cdot 3^3} (2\pi i \tilde{\tau}_{\mathrm{orb}})^3+\mathcal{O}((2\pi i \tilde{\tau}_{\mathrm{orb}})^{6})\,
,\nonumber\\
\mathscr{C}_{\mathrm{hol}}(E_{3})
&=&(2\pi i\tilde{\tau}_{\mathrm{orb}})^{5} \left(-{1\over 10\cdot 3^5}+\mathcal{O}((2\pi i \tilde{\tau}_{\mathrm{orb}})^{6})\right)\,.
\end{eqnarray}

\subsubsection{Local expansions of quasi-modular forms for $\Gamma_{0}(2)$}

The natural auxiliary elliptic curve family attached to the pillowcase orbifold in
Tab. \ref{tablequasimodularity}
is parametrized by the modular curve $\Gamma_{0}(4)\backslash\mathbb{H}^{*}$.
However, since there is no elliptic point on this modular curve, the method discussed above using periods does not apply directly. Instead, according to the results in \cite{Shen:2014}, we regard the correlation functions in the GW theory of the pillowcase orbifold as quasi-modular forms
for the modular group $\Gamma_{0}(2)$. Then we can expand them around the elliptic point $\tau_{*} $ on the modular curve $\Gamma_{0}(2)\backslash\mathbb{H}^{*}$.

In this case the Picard-Fuchs equation for the periods is the Gauss hypergeometric equation with $a=1/4, b=3/4,c=1$.
The same computations as in the $\mathbb{P}^{1}_{3,3,3}$ case give the following data for the elliptic point on the modular curve
$\Gamma_{0}(2)\backslash\mathbb{H}^{*}$
\begin{equation}
\kappa={i\over \sqrt{2}}\,, \quad \tau_{*}=\kappa e^{-\pi i {1\over 4}}\,,\quad K=\bar{\tau}_{*}-\tau_{*}\,.
\end{equation}
Similar calculation as in the $\Gamma_{0}(3)$ case yield expansions like
\begin{equation}
\mathscr{C}(C_{2})=\gamma_{+}e^{-\pi i a} \left(1+{1\over 8}({\gamma_{+}\over \gamma_{-}}\tau_{\mathrm{orb}})^2+
\mathcal{O}((\tau_{\mathrm{orb}})^4)\right)\,.
\end{equation}
Again to get rational expansions, we use the new coordinate 
\begin{equation}
\bar{\tau}_{\mathrm{orb}}=M_{\mathrm{rational}}\tau_{\mathrm{orb}}=
(\gamma_{+}e^{-\pi i a})^{2}\tau_{\mathrm{orb}}=\gamma_{+}\gamma_{-} e^{-2\pi i a} {\pi_{4}\over \pi_{3}}\,.
\end{equation}
In the new coordinate $\tilde{\tau}_{\mathrm{orb}}$, we have the following expansions
\begin{eqnarray}\label{eq2222boundary}
\mathscr{C}(A_{2})&=&(2\pi i \tilde{\tau}_{\mathrm{orb}})^{1\over 2}\left(2^{-{1\over 2}}+\mathcal{O}((2\pi i \tilde{\tau}_{\mathrm{orb}})^4)\right)\,,\nonumber\\
\mathscr{C}(C_{2})&=&1+{1\over 32}(2\pi i \tilde{\tau}_{\mathrm{orb}})^2+\mathcal{O}((2\pi i \tilde{\tau}_{\mathrm{orb}})^4)\,,\nonumber\\
\mathscr{C}_{\mathrm{hol}}(E_{2})&=&(2\pi i \tilde{\tau}_{\mathrm{orb}})^{3}\left(-{1\over 96}+\mathcal{O}((2\pi i \tilde{\tau}_{\mathrm{orb}})^4)\right)\,.
\end{eqnarray}

\section{LG/CY correspondence via modularity}
\label{secLGCYcorrespondence}

In this section, we use the results developed in earlier sections and in \cite{Krawitz:2011, Shen:2014} to give a proof of the LG/CY correspondence between the GW theories of elliptic orbifold curves and their FJRW counterparts.
We shall prove the following statement that the Cayley transformation $\mathscr{C}_{\rm hol}$ on quasi-modular forms induces the correspondence between the correlation functions in the GW and FJRW theories of a pair $(W,G)$.
\begin{thm}\label{lg-cy-main}
Let $(W,G)$ be a pair in Tab. \ref{tableWGdata}. Then there exists a degree and pairing preserving isomorphism between the graded vector spaces
\begin{equation}
\mathscr{G}: \left(\mathcal{H}^{\rm GW},\eta^{\rm GW}\right)\to \left(\mathcal{H}^{\rm FJRW},\eta^{\rm FJRW}\right)
\end{equation}
and a Cayley transformation $\mathscr{C}_{\rm hol}$, based at an elliptic point $\tau_{*}\in \mathbb{H}$,  such that for any $\{\alpha_j\}\subseteq \mathcal{H}^{\rm GW},$
\begin{equation}\label{lg-cy-correlator}
\mathscr{C}_{\rm hol}\left(\LL\alpha_1\psi_1^{\ell_1},\cdots,\alpha_k\psi_k^{\ell_k}\RR_{g,k}^{\rm GW}(q)\right)=
\LL\mathscr{G}(\alpha_1)\psi_1^{\ell_1},\cdots, \mathscr{G}(\alpha_k)\psi_k^{\ell_k}\RR^{\rm FJRW}_{g,k}(u)\,.
\end{equation}
\end{thm}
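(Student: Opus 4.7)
The plan is to carry out the matching in four steps, working separately for each family in Tab.~\ref{tableWGdata} but following a uniform strategy tied to the three ingredients already in place: the quasi-modularity of the GW side from \cite{Shen:2014}, the WDVV reduction on the FJRW side from Section~\ref{secLG}, and the invariance of the Ramanujan system under the Cayley transformation from Section~\ref{secmodularform}.

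First, I would fix the degree- and pairing-preserving linear isomorphism $\mathscr{G}\colon \mathcal{H}^{\rm GW}\to \mathcal{H}^{\rm FJRW}$ by identifying, in each model, the basis of Chen--Ruan classes of $\mathcal{X}_{(W,G)}$ with the basis of narrow sectors (together with the single broad sector in the pillowcase case) described in Section~\ref{secLG}. In each model the two state spaces have the same dimension and the two pairings are diagonal in these bases with values forced by the orbifold or Jacobi structure; the only nontrivial choice is which orbifold point of $\mathcal{X}_r$ corresponds to which twisted sector of $(W,G)$, and it must be made compatible with the Frobenius algebra structure at the large-volume limit $q=0$ (respectively $u=0$). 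The degree-two element $\mathcal{P}$ is sent to the degree-one FJRW element $\phi$, which is the variable along which both \eqref{cy-correlation} and \eqref{lg-correlation} expand.

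Second, I would reduce both sides to building blocks. On the GW side, by \cite{Shen:2014}, WDVV together with Getzler's relation and the $g$-reduction technique expresses every correlation function in \eqref{cy-correlation} as a polynomial in a finite collection of genus-zero GW building blocks; these are known to be quasi-modular forms for $\Gamma(r)$ satisfying the Ramanujan system \eqref{ramanujan}. On the FJRW side, the analysis of Section~\ref{secLG} (Propositions \ref{fjrw-e3} and \ref{pillow-fjrw}, with analogous statements for the remaining cases) reduces every primary genus-zero correlation function to polynomials in an explicit set of basic correlation functions such as $f_1,f_2,f_3$ in \eqref{cubic-block} or \eqref{pillow-initial}. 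The WDVV equations satisfied by these basic functions, namely \eqref{eqWDVVFJRW333} and \eqref{pillow-wdvv-1}, coincide termwise with the Ramanujan identities \eqref{ramanujan} once one identifies the $f_i$ with the appropriate modular generators $A_N,B_N,C_N,E_N$ after the Cayley transformation. Using tautological relations in higher genus to reconstruct all correlators \eqref{lg-correlation} from the genus-zero basic ones completes the reduction step on both sides.

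Third, I would apply the Cayley transformation based at the elliptic point $\tau_{*}$ determined in Section~\ref{secmodularform} (namely $\tau_{*} = -\kappa\zeta_3$ for $\mathbb{P}^1_{3,3,3}$ and the analogous points for the other families). Proposition~\ref{holomorphic-invariance} guarantees that $\mathscr{C}_{\rm hol}$ sends the GW building blocks to holomorphic functions in the rational-normalized local coordinate $\tilde{\tau}_{\rm orb}$ of \eqref{eqrationalparameter} that satisfy the very same Ramanujan system, but with $\partial_\tau$ replaced by $\partial_{\tilde{\tau}_{\rm orb}}$. Comparing with the FJRW WDVV system above, both the Cayley-transformed GW building blocks and the FJRW basic functions solve the same polynomial ODE system. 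The boundary data computed in \eqref{333-initial} and \eqref{eq2222boundary}, read off from the normalization \eqref{eqchangofnormalization}, match the boundary values in \eqref{initial-333} and \eqref{pillow-initial} after the substitution $u = 2\pi i\,\tilde{\tau}_{\rm orb}$ with the model-dependent scaling; uniqueness of the solution to an ODE system with polynomial right-hand side and prescribed initial conditions then forces the two sets of series to agree identically. Finally, since polynomial reconstruction from building blocks commutes with the ring homomorphism $\mathscr{C}_{\rm hol}$, the matching extends from the basic genus-zero functions to all $(g,k)$ correlation functions, which is precisely \eqref{lg-cy-correlator}. The main obstacle is the third step: one must verify that the rational normalization of the local uniformizing parameter produces exactly the rational boundary values demanded by the FJRW side for each of the four models, which rests on the careful $\Gamma$-function bookkeeping of Appendix~\ref{appendixB}; any residual overall constants have to be absorbed into $\mathscr{G}$, and showing that this absorption is consistent with the pairing and with the identification of primitive generators is the genuinely model-specific part of the argument.
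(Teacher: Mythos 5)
Your proposal follows essentially the same route as the paper: fix $\mathscr{G}$ on the state spaces, reduce both theories to building blocks via WDVV, Getzler's relation and $g$-reduction, match the building blocks by combining the invariance of the Ramanujan system under $\mathscr{C}_{\rm hol}$ (Proposition~\ref{holomorphic-invariance}) with the boundary data of \eqref{333-initial}, \eqref{eq2222boundary} and ODE uniqueness, and then propagate through the reconstruction. You also correctly isolate the genuinely delicate point, namely the rational normalization of the local coordinate and the absorption of the residual constants into $\mathscr{G}$ and a rescaling of the Cayley transform, which is exactly how the paper's Lemma~\ref{key-lemma} and the subsequent three-step verification proceed.
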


In this section, we shall give the details of the proof of Theorem \ref{lg-cy-main} for the elliptic orbifold curves $\mathbb{P}^1_{3,3,3}$ and $\mathbb{P}^1_{2,2,2,2}$  listed in Tab. \ref{tableWGdata} and Tab. \ref{tablequasimodularity}.
The elliptic point is given by the one on the modular curve $\Gamma_{0}(3)\backslash \mathbb{H}^{*}$ and $\Gamma_{0}(2)\backslash \mathbb{H}^{*}$, respectively. 
The other two cases can be proved similarly.

In each case, we first construct a map $\mathscr{G}$ preserving the degree and pairing of the state spaces. Then we check that the equality \eqref{lg-cy-correlator} holds for the building block correlation functions, which are polynomials in the generators that appear in the Ramanujan identities \eqref{ramanujan}. After that we check equality \eqref{lg-cy-correlator} holds for genus zero primary correlation functions, and in all genera by using the fact that the Cayley transformation $\mathscr{C}_{\rm hol}$ respects the WDVV equations and tautological relations which are used for the reconstruction of all genera correlation functions.

\begin{rem}
We make a remark about the relation between Theorem \ref{lg-cy-main} and the earlier works \cite{Krawitz:2011, Milanov:2011, Milanov:2012qu}.
We define $\mathcal{A}^{\rm FJRW}_{(W,G)}(u)$ by replacing the correlator $\langle\cdots\rangle$ in \eqref{fjrw-an} with correlation functions $\langle\langle\cdots\rangle\rangle(u)$, and define $\mathcal{A}^{\rm FJRW}_{(W,G)}(u, \bar{u})$ by replacing the correlation functions with their modular completions.
Similarly, we define $\mathcal{A}^{\rm GW}_{(W,G)}(\tau)$ and $\mathcal{A}^{\rm GW}_{(W,G)}(\tau, \bar{\tau})$ in the GW theory, with $t=2\pi i \tau/r$.
The above theorem then implies the commutative diagram in Fig. \ref{LGCYcorrespondence} (cf. \cite[Fig. 3]{Zhou:2014thesis}). The  commutativity of the diagram that we have shown using modular forms reflects
the invariance of the underlying Frobenius structures under the $\mathrm{SL}_{2}(\mathbb{C})$ action,
as previously studied by \cite{Dubrovin:1996}.

\begin{figure}[h]
  \renewcommand{\arraystretch}{1.2} 
\begin{displaymath}
\xymatrixcolsep{5pc}\xymatrixrowsep{5pc}\xymatrix{  \mathcal{A}^{\rm GW}_{(W,G)}(\tau) 
\ar@/^/[r]^{\textrm{modular completion}}\ar@{.>}[d]^{\mathscr{C}_{\mathrm{hol}}}   
 &  \ar@/^/[l]^{\textrm{constant term map}} \mathcal{A}^{\rm GW}_{(W,G)}(\tau,\bar{\tau})  \ar[d]^{\mathscr{C}} \\
 \mathcal{A}^{\rm FJRW}_{(W,G)}(u) \ar@/^/[r]^{\textrm{modular completion}}&  \ar@/^/[l]^{\textrm{holomorphic limit}}  \mathcal{A}^{\rm FJRW}_{(W,G)}(u,\bar{u})
 }
\end{displaymath}
  \caption[LGCYcorrespondence]{LG/CY correspondence}
  \label{LGCYcorrespondence}
\end{figure}

In the language of Givental's formalism \cite{Givental:20011}, the left vertical arrow can be stated as the following quantization formula
\begin{equation}
\mathscr{C}_{\rm hol}\left(\mathcal{A}^{\rm GW}_{(W,G)}(\tau)\right)=\widehat{\mathscr{G}^{-1}}
\left(\mathcal{A}^{\rm FJRW}_{(W,G)}(u)\right)\,.
\end{equation}
Here $\widehat{\mathscr{G}^{-1}}$ is the quantization operator from the Fock space with respect to $\mathcal{H}^{\rm GW}$ to the Fock space with respect to $\mathcal{H}^{\rm FJRW}$. We refer the interested readers to \cite{Givental:20011, Coates:2012} for more details about the construction of Fock spaces, quantization operator, and Givental's formalism.
\end{rem}

\subsection{LG/CY correspondence for $\mathbb{P}^1_{3,3,3}$}\label{sec-cubic}

Recall that in this case, we have $W=x_1^3+x_2^3+x_3^3$ and $G={\rm Aut}(W)$.
Let $\lambda=1/\sqrt{3}$, we construct a morphism 
\begin{equation*}
\mathscr{G}:\left(\mathcal{H}^{\rm GW},\eta^{\rm GW}\right)\to \left(\mathcal{H}^{\rm FJRW},\eta^{\rm FJRW}\right)
\end{equation*}
according to Tab. \ref{tablestatespaceiso333}.
\begin{table}[h]
  \caption[Isomorphism between state spaces for the $\mathbb{P}^{1}_{3,3,3}$ case]{Isomorphism between state spaces for the $\mathbb{P}^{1}_{3,3,3}$ case}
  \label{tablestatespaceiso333}
  \renewcommand{\arraystretch}{2.0} 
\begin{displaymath}
  \begin{tabular}{ |c | c  c c c cccc|}
    \hline
 $\bullet$ & $1$ & $\Delta_1$  & $\Delta_2$& $\Delta_3$ & $3\Delta_3^2$& $3\Delta_2^2$ & $3\Delta_1^2$ & $\mathcal{P}$ \\ 
\hline
   $\mathscr{G}(\bullet)$ & $1$ & $\lambda\phi_1$  & $\lambda\phi_2$ & $\lambda\phi_3$  & $\lambda^{-1}\phi_4$ & $\lambda^{-1}\phi_5$  & $\lambda^{-1}\phi_6$ & $\phi$ \\ 
\hline
  \end{tabular}
\end{displaymath}
\end{table}
It is easy to see that indeed the map $\mathscr{G}$ preserves the degree and pairing. \\

To simplify the notations, we denote
\begin{equation}\label{cubic-basic}
\LL\Delta_1,\Delta_2,\Delta_3\RR_{0,3}^{\rm GW}=M_1(\tau)\,, 
\LL\Delta_1,\Delta_1,\Delta_1\RR_{0,3}^{\rm GW}=M_2(\tau)\,, 
\LL\Delta_1,\Delta_1,\Delta_1^2,\Delta_1^2\RR_{0,4}^{\rm GW}=M_3(\tau)\,.
\end{equation}
We recall that in \cite{Shen:2014}, the WDVV equations satisfied by these functions are shown to be equivalent to the Ramanujan identities \eqref{ramanujan} for the modular group $\Gamma_{0}(3)$. 
By matching the boundary conditions, the following equalities are obtained 
\begin{equation}\label{gw-quasimodular}
C_{3}(\tau)=
3M_1(\tau), \quad
A_{3}(\tau)=
3M_2(\tau), \quad
E_{3}(\tau)=-9M_3(\tau).
\end{equation}
Now we have the following lemma (cf. \cite{Basalaev:20141, Basalaev:20142})
\begin{lem}\label{key-lemma}
The identity \eqref{lg-cy-correlator} holds for the basic correlation functions in \eqref{cubic-basic}. 
\end{lem}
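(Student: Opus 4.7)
The approach is to recognize both sides as solutions of the same ODE system with matching initial conditions, then invoke uniqueness. First, by \eqref{gw-quasimodular}, the GW building blocks $M_1,M_2,M_3$ are, up to the explicit scalars $3,3,-9$, the quasi-modular forms $C_3,A_3,E_3$ generating $\widetilde{M}(\Gamma_0(3))$. Applying Proposition \ref{thminvariancequasimodular} at the elliptic point $\tau_*=-\kappa\zeta_3$, the Cayley transformations $\mathscr{C}_{\mathrm{hol}}(C_3),\mathscr{C}_{\mathrm{hol}}(A_3),\mathscr{C}_{\mathrm{hol}}(E_3)$ satisfy the same Ramanujan identities \eqref{ramanujan} with $r=3$, now in the variable $\tilde{\tau}_{\mathrm{orb}}$ with derivative $\partial_{\tilde{\tau}_{\mathrm{orb}}}=\tfrac{1}{2\pi i}\partial/\partial\tilde{\tau}_{\mathrm{orb}}$.

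Second, on the FJRW side, inspection of \eqref{eqWDVVFJRW333} shows that the triple $(f_1,f_2,-3f_3)$ satisfies a system formally identical to the Ramanujan system \eqref{ramanujan} for $(C,A,E)$ at $r=3$, with $d/du$ playing the role of $\partial_\tau$. Thus the identifications
\[
3\,\mathscr{C}_{\mathrm{hol}}(M_1)\longleftrightarrow \mathscr{C}_{\mathrm{hol}}(C_3)\longleftrightarrow f_1,\quad
3\,\mathscr{C}_{\mathrm{hol}}(M_2)\longleftrightarrow \mathscr{C}_{\mathrm{hol}}(A_3)\longleftrightarrow f_2,\quad
-9\,\mathscr{C}_{\mathrm{hol}}(M_3)\longleftrightarrow \mathscr{C}_{\mathrm{hol}}(E_3)\longleftrightarrow -3f_3
\]
convert one ODE system into the other, provided the derivatives are identified compatibly. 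Combining this with the scaling factor $\lambda=1/\sqrt{3}$ entering through $\mathscr{G}$ in Tab.~\ref{tablestatespaceiso333} (so that $\LL\mathscr{G}(\Delta_1),\mathscr{G}(\Delta_2),\mathscr{G}(\Delta_3)\RR=\lambda^3 f_1$, etc.) then pins down the rescaling between $u$ and $2\pi i\tilde{\tau}_{\mathrm{orb}}$ uniquely: the factors $3,3,-9$ and the cubic factor $\lambda^3$ (for the $3$-point blocks) or $\lambda^4$ (for the $4$-point block) must combine consistently with the derivative rescaling, which is determined by matching the linear term in the ODE for $f_2$ with that for $\mathscr{C}_{\mathrm{hol}}(A_3)$.

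Third, the initial conditions match. On the FJRW side \eqref{initial-333} gives $f_1(0)=1$, $f_2(0)=f_3(0)=0$ and $f_2'(0)=1/3$. On the GW side the elliptic expansions computed in \eqref{333-initial} give $\mathscr{C}(C_3)=1+O(\tilde{\tau}_{\mathrm{orb}}^3)$, $\mathscr{C}(A_3)=\tfrac{1}{3}(2\pi i\tilde{\tau}_{\mathrm{orb}})+\cdots$, and $\mathscr{C}_{\mathrm{hol}}(E_3)=O(\tilde{\tau}_{\mathrm{orb}}^5)$. Under the linear change of variable fixed in the previous step, these leading coefficients transform into exactly the FJRW initial values after applying $\mathscr{G}$; the vanishing orders ($0,1,5$) already agree with the vanishing orders of $f_1$, $f_2$, $f_3$ dictated by the Selection Rule and degree axiom in FJRW theory.

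Fourth, since the Ramanujan system is a determined first-order ODE system in three unknowns, uniqueness of solutions (the existence/uniqueness argument used already in \cite{Shen:2014}) forces the equality of the two triples of power series, which is exactly the three equalities of Lemma \ref{key-lemma}. The main obstacle is the bookkeeping of scaling factors in Step two: one must simultaneously fit the scalar factors $3,3,-9$ from \eqref{gw-quasimodular}, the powers $\lambda^3,\lambda^3,\lambda^4$ coming from $\mathscr{G}$ on three-point versus four-point correlators, and the derivative rescaling $du/d(2\pi i\tilde{\tau}_{\mathrm{orb}})$, so that exactly the cubic Ramanujan system \eqref{ramanujan} on one side becomes exactly \eqref{eqWDVVFJRW333} on the other. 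Once these scalings are aligned consistently (which is possible because the Ramanujan system is homogeneous under the rescalings $(A,C,E)\mapsto(cA,cC,c^2E)$, $\tau\mapsto c^{-1}\tau$), the boundary-condition matching and uniqueness finish the argument.
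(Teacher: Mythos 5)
Your proposal follows essentially the same route as the paper: invoke Proposition \ref{thminvariancequasimodular} to transport the Ramanujan identities through $\mathscr{C}_{\rm hol}$, identify the resulting ODE system with the FJRW WDVV system \eqref{eqWDVVFJRW333}, match the boundary conditions \eqref{333-initial} against \eqref{initial-333}, and absorb the residual weight-dependent factors by the further rescaling $M=3$ of the Cayley coordinate (the paper's \eqref{eqchangofnormalization}--\eqref{eqCayleyfromnormalization}), then conclude by ODE uniqueness. One small slip in your final parenthetical: the scaling symmetry of \eqref{ramanujan} is $(A,C,E)\mapsto(cA,cC,c^{2}E)$ with $\tau\mapsto c^{-2}\tau$, not $c^{-1}\tau$, consistent with $A,C$ having weight $1$ and $E$ weight $2$; this does not affect the argument.
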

\begin{proof}
By Proposition \ref{holomorphic-invariance}, the differential equations \eqref{ramanujan} for the quasi-modular forms $M_{1}(\tau)$, $M_{2}(\tau)$, $M_{3}(\tau)$ are invariant under the Cayley transformation $\mathscr{C}_{\rm hol}$. Comparing these equations with the WDVV equations in \eqref{eqWDVVFJRW333}, and the corresponding boundary conditions \eqref{333-initial} with \eqref{initial-333}, we obtain
\begin{eqnarray*}
&&\mathscr{C}_{\rm hol}\left(\LL\Delta_1,\Delta_2,\Delta_3\RR_{0,3}^{\rm GW}(\tau)\right)=\sqrt{3}\LL \lambda \phi_1, \lambda \phi_2,  \lambda\phi_3\RR^{\rm FJRW}_{0,3}(\tilde{\tau}_{\mathrm{orb}}),\\
&&\mathscr{C}_{\rm hol}\left(\LL\Delta_1,\Delta_1,\Delta_1\RR_{0,3}^{\rm GW}(\tau)\right)=\sqrt{3}\LL \lambda\phi_1, \lambda\phi_1, \lambda\phi_1\RR^{\rm FJRW}_{0,3}(\tilde{\tau}_{\mathrm{orb}}),\\
&&\mathscr{C}_{\rm hol}\left(\LL  \Delta_1, \Delta_1, 3\Delta_1^2, 3\Delta_1^2\RR_{0,4}^{\rm GW}(\tau)\right)=\sqrt{3}^2\LL \lambda \phi_1,  \lambda\phi_1,\lambda^{-1} \phi_6, \lambda^{-1}\phi_6\RR^{\rm FJRW}_{0,4}(\tilde{\tau}_{\mathrm{orb}})\,,
\end{eqnarray*}
where $\tilde{\tau}_{\mathrm{orb}}=M_{\mathrm{rational}}\mathcal{C}(\tau)$ is given in \eqref{eqrationalparameter}. We now apply a further change of normalization in \eqref{eqchangofnormalization} with $M=3$, that is, we consider the scaled Cayley transform $\mathcal{C}: \tau \mapsto u=3M_{\mathrm{rational}}\mathcal{C}(\tau)$.
Then according to \eqref{eqCayleyfromnormalization},
the resulting new Cayley transformation gives the following desired identities
\begin{eqnarray*}
&&\mathscr{C}_{\rm hol}\left(\LL\Delta_1,\Delta_2,\Delta_3\RR_{0,3}^{\rm GW}(\tau)\right)=\LL \lambda \phi_1, \lambda \phi_2,  \lambda\phi_3\RR^{\rm FJRW}_{0,3}(u),\\
&&\mathscr{C}_{\rm hol}\left(\LL\Delta_1,\Delta_1,\Delta_1\RR_{0,3}^{\rm GW}(\tau)\right)=\LL \lambda\phi_1, \lambda\phi_1, \lambda\phi_1\RR^{\rm FJRW}_{0,3}(u),\\
&&\mathscr{C}_{\rm hol}\left(\LL  \Delta_1, \Delta_1, 3\Delta_1^2, 3\Delta_1^2\RR_{0,4}^{\rm GW}(\tau)\right)=\LL \lambda \phi_1,  \lambda\phi_1,\lambda^{-1} \phi_6, \lambda^{-1}\phi_6\RR^{\rm FJRW}_{0,4}(u)\,.
\end{eqnarray*}
\end{proof}
\begin{proof}[Proof of Theorem \ref{lg-cy-main} for the $\mathbb{P}^{1}_{3,3,3}$ case]
As described in \cite{Krawitz:2011}, both GW correlation functions and FJRW correlation functions in these cases can be uniquely reconstructed from the pairing, the ring structure constants, and the correlation functions described in Lemma \ref{key-lemma} (hence the name building blocks). 
Lemma \ref{key-lemma} tells that the Cayley transformation $\mathscr{C}_{\rm hol}$ matches the building blocks in the two theories.
Since the reconstruction process of these two theories are the same, we only need to check $\mathscr{C}_{\rm hol}$ is compatible with the process of reconstruction which only involves addition, multiplication and differentiation. Then Theorem \ref{lg-cy-main} follows from Proposition \ref{thminvariancequasimodular} that the Cayley transformation preserves the differential ring structure.

We finish the proof by further checking the reconstruction process in three steps.
\paragraph{Step One: Primary correlation functions in genus $g=0$.}
This follows from Lemma \ref{key-lemma} and comparing the FJRW prepotential in Proposition \ref{fjrw-e3} with the formula for $\mathcal{F}^{\rm GW}_{0,\mathbb{P}^1_{3,3,3}}$ in \cite{Shen:2014}. 
\paragraph{Step Two:
Primary correlation functions in genus $g\geq1$.}
On the FJRW side, we look at the degree formula \eqref{degree-axiom}. Note $\hat{c}_W=1$ and $\deg_W(\phi_j)\leq 1.$ If $g\geq1$ and each $\ell_j=0$, then \eqref{degree-axiom} holds only if $g=1$ and all $\deg_W(\phi_j)=1$. Thus on the FJRW side, any non-vanishing correlation function must be a
differential polynomial in
$$\LL\phi\RR_{1,1}^{\mathrm{FJRW}}\,.$$
Similarly, on the GW side, we only need to consider differential polynomials in
$$\LL \mathcal{P}\RR_{1,1}^{\mathrm{GW}}\,.$$
Because $\mathscr{C}_{\rm hol}$ is compatible with differentiation, we only need to check \eqref{lg-cy-correlator} for these two correaltion functions.

On the GW side, we integrate the GW class $\Lambda_{1,4}^{\mathrm{GW}}(\Delta_1^2,\Delta_1^2,\Delta_2,\Delta_3)$ over Getzler's relation \cite{Getzler:1997}
\begin{equation}\label{eq:Getzler}
  12\delta_{2,2}-4\delta_{2,3}-2\delta_{2,4}+6\delta_{3,4}+\delta_{0,3}+\delta_{0,4}-2\delta_{\beta}=0\in H_4(\overline{\mathcal{M}}_{1,4},\mathbb{Q})\,.
\end{equation}
It follows from the Splitting Axiom and genus zero computation that $\langle\langle \mathcal{P}\rangle \rangle_{1,1}^{\mathrm{GW}}$ is a linear combination of some genus zero primary correlation functions. The calculation in \cite{Shen:2014} showed\footnote{This corrects an error made in \cite{Shen:2014}
which differed from the correct result here by a factor of $2$.}
\begin{equation*}
\LL \mathcal{P}\RR_{1,1}^{\mathrm{GW}}=\frac{-2E_3+A_3^2}{24}\,.
\end{equation*}
Similarly, we can calculate $\langle\langle \phi\rangle\rangle_{1,1}^{\mathrm{FJRW}}$ by integrating $\Lambda_{1,4}^{\mathrm{FJRW}}(\phi_1,\phi_2,\phi_4,\phi_4)$ over the relation in \eqref{eq:Getzler}.
The Splitting Axiom and genus zero computation in Proposition \ref{fjrw-e3} allow us to obtain
\begin{equation*}
\LL\phi\RR_{1,1}^{\mathrm{FJRW}}=\frac{6f_{3}+f_{2}^2}{72}\,.
\end{equation*}
Then we can apply Lemma  \ref{key-lemma} to get
\begin{equation*}
\mathscr{C}_{\rm hol}\left(\LL  \mathcal{P}\RR_{1,1}^{\mathrm{GW}}\right)
=\mathscr{C}_{\rm hol}\left(\frac{-2E_3+A_3^2}{24}\right)=\frac{6f_{3}+f_{2}^2}{72}=\LL\phi\RR_{1,1}^{\mathrm{FJRW}}\,.
\end{equation*}

\paragraph{Step Three: Correlation functions with $\psi$-classes.}
To reconstruct these correlation functions, we recall the \emph{$g$-reduction technique} introduced in \cite{Faber:2010}. According to Ionel \cite{Ionel:2002} and Faber--Pandharipande \cite{Faber:2005},
if $M(\psi, \kappa)$ is a monomial of $\psi$-classes and $\kappa$-classes with $\deg M\geq g$ for $g\geq1$ or $\deg M\geq1$ for $g=0$, then $M(\psi, \kappa)$ can be presented by a linear combination of dual graphs on the boundary of $\overline{\mathcal{M}}_{g,k}$.
Now let us consider a correlation function (either GW or FJRW)
\begin{equation*}
\LL\phi_1\psi_1^{\ell_1},\cdots,\phi_k\psi_k^{\ell_k}\RR_{g,k}\,.
\end{equation*}
If 
\begin{equation}\label{g-nonvanishing}
\deg\left(\prod_{j=1}^{k}\psi_j^{\ell_j}\right)
:=\sum_{j=1}^{k}\ell_j\geq
\left\{
\begin{array}{ll}
2g-2,& g\geq1\,,\\
1,& g=0\,,
\end{array}
\right.
\end{equation}
then $\prod_{j=1}^{k}\psi_j^{\ell_j}$ is a monomial satisfying the condition for the $g$-reduction, thus we can apply this technique to reduce its degree.
If \eqref{g-nonvanishing} is not satisfied, then the correlation function will vanish by the degree formulas.
Hence by repeatedly using the Splitting Axiom, the vanishing condition, and the $g$-reduction, one can eventually rewrite 
the correlation function as a product of genus zero and genus one primary correlation functions. Now the result follows from the first two steps and the fact that $\mathscr{G}$ is pairing-preserving and is compatible with the splitting.
\end{proof}

\subsection{LG/CY correspondence for $\mathbb{P}^1_{2,2,2,2}$}

In this case, we have $W=x_1^4+x_2^4+x_3^2$ and $G=G_1\times {\rm Aut}(x_3^2)$, where $G_1=\langle (\sqrt{-1},\sqrt{-1}), (1,-1)\rangle$.
We first  apply a change of basis within the FJRW state space. Let $\phi_{0}=\tilde{\phi}_{0},\phi=\tilde{\phi}$, and
\begin{equation*}
{1\over \sqrt{2}}
\begin{pmatrix}
\phi_1,
\phi_3
\end{pmatrix}
\begin{pmatrix}
e^{3\pi i\over 4}&e^{5\pi i\over 4}\\
e^{5\pi i\over 4}&e^{3\pi i\over 4}
\end{pmatrix}
=\pm
\begin{pmatrix}
\tilde{\phi}_{1},
\tilde{\phi}_{3}
\end{pmatrix},
\quad
{1\over \sqrt{2}}
\begin{pmatrix}
\phi_2,
\phi_4
\end{pmatrix}
\begin{pmatrix}
1&1\\
1&-1
\end{pmatrix}
=\pm\begin{pmatrix}
\tilde{\phi}_{2},
\tilde{\phi}_{4}
\end{pmatrix}\,.
\end{equation*}
It is easy to check that the non-vanishing parings in the new basis are given by
\begin{equation*}
\eta^{\rm FJRW}(\tilde{\phi}_{0}, \tilde{\phi})=\eta^{\rm FJRW}( \tilde{\phi}_i, \tilde{\phi}_i)=1\,,\quad i=1,2,3,4\,.
\end{equation*} 
Denote the corresponding coordinates with respect to the basis $\{\tilde{\phi}_{i}\}_{i=0}^{5}$ by $\{v_{i}\}_{i=0}^{5}$. In the new coordinate system, the prepotential $\F_{0,(W,G)}^{\rm FJRW}(u)$ in Proposition \ref{pillow-fjrw} becomes
\begin{equation}\label{new-pillow}
\begin{aligned}
\F_{0,(W,G)}^{\rm FJRW}(v)=&
~\mathrm{cubic~ terms}\\
&+(-f_1(v)) v_1 v_2 v_3 v_4+
 (2f_2(v)+f_{3}(v))\frac{1}{4!}\sum_{i}v_i^4+f_3(v)\frac{1}{2!2!}\sum_{i<j}v_i^2 v_j^2\,.
\end{aligned}
\end{equation}
\begin{proof}[Proof of Theorem \ref{lg-cy-main} for the $\mathbb{P}^{1}_{2,2,2,2}$ case]
We construct the linear map $\mathscr{G}:\left(\mathcal{H}^{\rm GW},\eta^{\rm GW}\right)\to \left(\mathcal{H}^{\rm FJRW},\eta^{\rm FJRW}\right)$
according to Tab. \ref{tablestatespaceiso2222}.
\begin{table}[h]
  \caption[Isomorphism between state spaces for the $\mathbb{P}^{1}_{2,2,2,2}$ case]{Isomorphism between state spaces for the $\mathbb{P}^{1}_{2,2,2,2}$ case}
  \label{tablestatespaceiso2222}
  \renewcommand{\arraystretch}{2.0} 
\begin{displaymath}
  \begin{tabular}{ |c | c  c c ccc|}
    \hline
 $\bullet$ & $1$ & $\sqrt{2}\Delta_1$  & $ \sqrt{2}\Delta_2$& $\sqrt{2}\Delta_3$ & $\sqrt{2}\Delta_4$& $\mathcal{P}$ \\ 
\hline
   $\mathscr{G}(\bullet)$ & $1$ & $\tilde{\phi}_1$  & $\tilde{\phi}_2$ & $\tilde{\phi}_3$  & $\tilde{\phi}_4$ & $\tilde{\phi}$ \\ 
\hline
  \end{tabular}
\end{displaymath}
\end{table}
It is obvious that $\mathscr{G}$ preserves the pairing. 

Recall that on the GW side,  the genus zero primary potential is \cite{Satake:2011, Shen:2014}
\begin{equation*}\label{eqg0N2}
\mathcal{F}_{0,(W,G)}^{\mathrm{GW}}=\mathrm{cubic~ terms}+ X(\tau)t_1t_2t_3t_4 + \frac{Y(\tau)}{4!} (\sum_{i=1}^4t_i^4)+ \frac{Z(\tau)}{2!2!} \sum_{i<j}(t_i^2 t_j^2)\,,
\end{equation*}
where the correlation functions $X(\tau),Y(\tau),Z(\tau)$ are the following quasi-modular forms for $\Gamma_{0}(2)$ (see Section \ref{secperiodcalculation} for details of the generators $A_{2},C_{2},E_{2}$)
\begin{equation*}
X(\tau)={1\over 8}C_{2}^{2}(\tau)\,,\quad Y(\tau)=-{1\over 16}(3E_{2}(\tau)+A_{2}^{2}(\tau))\,,
\quad Z(\tau)={1\over 16}(-E_{2}(\tau)+A_{2}^{2}(\tau))\,.
\end{equation*}
The equations satisfied by $\mathscr{C}_{\mathrm{hol}}(X(\tau)), \mathscr{C}_{\mathrm{hol}}(Y(\tau)),\mathscr{C}_{\mathrm{hol}}(Z(\tau))$ can be derived from \eqref{ramanujan} and Theorem \ref{thminvariancequasimodular}, the boundary conditions
from \eqref{eq2222boundary}.
Now we compare them with the WDVV equations  \eqref{pillow-wdvv-1} for the FJRW correlations functions, and the corresponding boundary conditions in \eqref{pillow-initial}, we are led to
\begin{equation*}\label{pillow-genus-zero}
\mathscr{C}_{\rm hol}\big(2X(\tau)\big)=-f_1(\tilde{\tau}_{\mathrm{orb}}), \quad \mathscr{C}_{\rm hol}\big(2Y(\tau)\big)=2f_2(\tilde{\tau}_{\mathrm{orb}})+f_3(\tilde{\tau}_{\mathrm{orb}}), \quad \mathscr{C}_{\rm hol}\big(2Z(\tau)\big)=f_3(\tilde{\tau}_{\mathrm{orb}})\,.
\end{equation*}
Hence we obtain, under the Cayley transform $\tau\mapsto \tilde{\tau}_{\mathrm{orb}}=M_{\mathrm{rational}}\mathcal{C}(\tau)$,
\begin{eqnarray*}
&&\mathscr{C}_{\mathrm{hol}}\left(\LL  \sqrt{2}\Delta_1, \sqrt{2}\Delta_2, \sqrt{2}\Delta_3,  \sqrt{2}\Delta_{4}\RR_{0,4}^{\rm GW}\right)=2\LL \tilde{\phi}_1, \tilde{\phi}_2,\tilde{\phi}_3,\tilde{\phi}_4\RR^{\rm FJRW}_{0,4}(\tilde{\tau}_{\mathrm{orb}})\,,\\
&&\mathscr{C}_{\mathrm{hol}}\left(\LL  \sqrt{2}\Delta_1, \sqrt{2}\Delta_1, \sqrt{2}\Delta_1,  \sqrt{2}\Delta_{1}\RR_{0,4}^{\rm GW}\right)=2\LL \tilde{\phi}_1, \tilde{\phi}_1,\tilde{\phi}_1,\tilde{\phi}_1\RR^{\rm FJRW}_{0,4}(\tilde{\tau}_{\mathrm{orb}})\,,\\
&&\mathscr{C}_{\mathrm{hol}}\left(\LL  \sqrt{2}\Delta_1, \sqrt{2}\Delta_1, \sqrt{2}\Delta_2,  \sqrt{2}\Delta_{2}\RR_{0,4}^{\rm GW}\right)=2\LL \tilde{\phi}_1, \tilde{\phi}_1,\tilde{\phi}_2,\tilde{\phi}_2\RR^{\rm FJRW}_{0,4}(\tilde{\tau}_{\mathrm{orb}})\,.
\end{eqnarray*}
Note that all of the quasi-modular forms involved in the genus zero primary potential have weight $2$.
Similar to the $\mathbb{P}^{1}_{3,3,3}$ case, by scaling the coordinate
\begin{equation}
\tilde{\tau}_{\mathrm{orb}}=M_{\mathrm{rational}}\mathcal{C}(\tau)\mapsto 2\tilde{\tau}_{\mathrm{orb}}\,,
\end{equation}
the resulting new Cayley transform $\tau\mapsto 2\tilde{\tau}_{\mathrm{orb}}$ matches the correlation functions exactly.
That is, by equating $v_{i}$ with $t_{i},i=0,\cdots 4$, and $v$ with $2\tilde{\tau}_{\mathrm{orb}}$, we have
\begin{equation}
\mathscr{C}_{\mathrm{hol}}(\mathcal{F}_{0,(W,G)}^{\mathrm{GW}}(\tau))=\F_{0,(W,G)}^{\rm FJRW}(v)\,.
\end{equation}
The rest of the proof follows from the same reasoning in the $\mathbb{P}^{1}_{3,3,3}$ case.
\end{proof}

\begin{appendices}
\numberwithin{equation}{section}

\section{WDVV equations in FJRW theories}
\label{appendixA}

\paragraph*{The WDVV equations in \eqref{wdvv-333} are explained graphically as follows:}
\begin{itemize}
\item $f_1f_4=f_2f_5; \quad  (i,j,k,\ell)=(1,2,1,2),  \quad S=\{4\}.$
\begin{center}
\begin{picture}(120,23)


\put(27, 8){$=$}
\put(-23, 8){$2\bigg($}
\put(22, 8){$\bigg)$}
\put(83, 8){$+$}

	\put(-10,9){\line(-3,4){5}}
    \put(-10,9){\line(-3,-4){5}}
	\put(-10,9){\line(1,0){10}}
	\put(0,9){\line(1,0){10}}
    \put(10,9){\line(3,4){5}}
    \put(10,9){\line(3,-4){5}}

	
	\put(-17,17){$\phi_1$}
	\put(-17,0){$\phi_2$}
        \put(-6,10){$\phi_3$}
        \put(5,10){$\phi_4$}
	\put(15,17){$\phi_1$}
        \put(16,8){$\phi_4$}
	\put(15,0){$\phi_2$}

    \put(10,9){\line(3,0){5}}

	\put(50,9){\line(-3,4){5}}
    \put(50,9){\line(-3,-4){5}}
	\put(50,9){\line(1,0){10}}
	\put(60,9){\line(1,0){10}}
    \put(70,9){\line(3,4){5}}
    \put(70,9){\line(3,-4){5}}

	
	\put(43,17){$\phi_1$}
        \put(40,8){$\phi_4$}
	\put(43,0){$\phi_1$}
        \put(54,10){$\phi_5$}
        \put(65,10){$\phi_2$}
	\put(75,17){$\phi_2$}
	\put(75,0){$\phi_2$}


    \put(50,9){\line(-2,0){5}}

	\put(100,9){\line(-3,4){5}}
    \put(100,9){\line(-3,-4){5}}
	\put(100,9){\line(1,0){10}}
	\put(110,9){\line(1,0){10}}
    \put(120,9){\line(3,4){5}}
    \put(120,9){\line(3,-4){5}}

	
	\put(93,17){$\phi_1$}
	\put(93,0){$\phi_1$}
        \put(104,10){$\phi_1$}
        \put(115,10){$\phi_6$}
	\put(125,17){$\phi_2$}
        \put(126,8){$\phi_4$}
	\put(125,0){$\phi_2$}


    \put(120,9){\line(2,0){5}}

\end{picture}
\end{center}
\item $f_1f_5=f_2'+f_2f_6; \quad  (i,j,k,\ell)=(1,2,1,5),  \quad S=\{1\}.$

\begin{center}

\begin{picture}(120,23)


\put(27, 8){$=$}
\put(83, 8){$+$}

	\put(50,9){\line(-3,4){5}}
    \put(50,9){\line(-3,-4){5}}
	\put(50,9){\line(1,0){10}}
	\put(60,9){\line(1,0){10}}
    \put(70,9){\line(3,4){5}}
    \put(70,9){\line(3,-4){5}}

	
	\put(43,17){$\phi_1$}
        \put(40,8){$\phi_1$}
	\put(43,0){$\phi_1$}
        \put(54,10){$\phi_7$}
        \put(65,10){$\phi_0$}
	\put(75,17){$\phi_2$}
	\put(75,0){$\phi_5$}

    \put(50,9){\line(-2,0){5}}

	\put(100,9){\line(-3,4){5}}
    \put(100,9){\line(-3,-4){5}}
	\put(100,9){\line(1,0){10}}
	\put(110,9){\line(1,0){10}}
    \put(120,9){\line(3,4){5}}
    \put(120,9){\line(3,-4){5}}

	
	\put(93,17){$\phi_1$}
	\put(93,0){$\phi_1$}
        \put(104,10){$\phi_1$}
        \put(115,10){$\phi_6$}
	\put(125,17){$\phi_2$}
        \put(126,8){$\phi_1$}
	\put(125,0){$\phi_5$}

    \put(120,9){\line(2,0){5}}

	\put(-10,9){\line(-3,4){5}}
    \put(-10,9){\line(-3,-4){5}}
	\put(-10,9){\line(1,0){10}}
	\put(0,9){\line(1,0){10}}
    \put(10,9){\line(3,4){5}}
    \put(10,9){\line(3,-4){5}}

	
	\put(-17,17){$\phi_1$}
	\put(-17,0){$\phi_2$}
        \put(-6,10){$\phi_3$}
        \put(5,10){$\phi_4$}
	\put(15,17){$\phi_1$}
        \put(16,8){$\phi_1$}
	\put(15,0){$\phi_5$}

    \put(10,9){\line(3,0){5}}

\end{picture}
\end{center}

\item $2f_1f_6=f_3f_1+f_2f_4; \quad  (i,j,k,\ell)=(1,2,1,3),  \quad S=\{6\}$.

\begin{center}
\setlength{\unitlength}{0.08cm}
\begin{picture}(60,23)


\put(27, 8){$=$}
\put(83, 8){$+$}
\put(-27, 8){$+$}

	\put(-60,9){\line(-3,4){5}}
    \put(-60,9){\line(-3,-4){5}}
	\put(-60,9){\line(1,0){10}}
	\put(-50,9){\line(1,0){10}}
    \put(-40,9){\line(3,4){5}}
    \put(-40,9){\line(3,-4){5}}

	
	\put(-67,17){$\phi_1$}
        \put(-70,8){$\phi_6$}
	\put(-67,0){$\phi_2$}
        \put(-56,10){$\phi_5$}
        \put(-45,10){$\phi_2$}
	\put(-35,17){$\phi_1$}
	\put(-35,0){$\phi_3$}

    \put(-60,9){\line(-2,0){5}}

	\put(50,9){\line(-3,4){5}}
    \put(50,9){\line(-3,-4){5}}
	\put(50,9){\line(1,0){10}}
	\put(60,9){\line(1,0){10}}
    \put(70,9){\line(3,4){5}}
    \put(70,9){\line(3,-4){5}}

	
	\put(43,17){$\phi_1$}
        \put(40,8){$\phi_6$}
	\put(43,0){$\phi_1$}
        \put(54,10){$\phi_6$}
        \put(65,10){$\phi_1$}
	\put(75,17){$\phi_2$}
	\put(75,0){$\phi_3$}

    \put(50,9){\line(-2,0){5}}

	\put(100,9){\line(-3,4){5}}
    \put(100,9){\line(-3,-4){5}}
	\put(100,9){\line(1,0){10}}
	\put(110,9){\line(1,0){10}}
    \put(120,9){\line(3,4){5}}
    \put(120,9){\line(3,-4){5}}

	
	\put(93,17){$\phi_1$}
	\put(93,0){$\phi_1$}
        \put(104,10){$\phi_1$}
        \put(115,10){$\phi_6$}
	\put(125,17){$\phi_2$}
        \put(126,8){$\phi_6$}
	\put(125,0){$\phi_3$}

    \put(120,9){\line(2,0){5}}

	\put(-10,9){\line(-3,4){5}}
    \put(-10,9){\line(-3,-4){5}}
	\put(-10,9){\line(1,0){10}}
	\put(0,9){\line(1,0){10}}
    \put(10,9){\line(3,4){5}}
    \put(10,9){\line(3,-4){5}}

	
	\put(-17,17){$\phi_1$}
	\put(-17,0){$\phi_2$}
        \put(-6,10){$\phi_3$}
        \put(5,10){$\phi_4$}
	\put(15,17){$\phi_1$}
        \put(16,8){$\phi_6$}
	\put(15,0){$\phi_3$}

    \put(10,9){\line(3,0){5}}

\end{picture}
\end{center}

\item $f_1f_5'=2f_5f_1'; \quad  (i,j,k,\ell)=(1,2,5,7),  \quad S=\{1,1\}.$
\begin{center}
\begin{picture}(50,23)


\put(27, 8){$=$}
\put(34, 8){$2\bigg($}
\put(82, 8){$\bigg)$}

	\put(50,9){\line(-3,4){5}}
    \put(50,9){\line(-3,-4){5}}
	\put(50,9){\line(1,0){10}}
	\put(60,9){\line(1,0){10}}
    \put(70,9){\line(3,4){5}}
    \put(70,9){\line(3,-4){5}}

	
	\put(43,17){$\phi_1$}
	\put(40,8){$\phi_1$}
	\put(43,0){$\phi_5$}
        \put(54,10){$\phi_4$}
        \put(65,10){$\phi_3$}
	\put(75,17){$\phi_2$}
	\put(76,8){$\phi_1$}
	\put(75,0){$\phi_7$}

    \put(50,9){\line(-2,0){5}}

    \put(70,9){\line(2,0){5}}

	\put(-10,9){\line(-3,4){5}}
    \put(-10,9){\line(-3,-4){5}}
	\put(-10,9){\line(1,0){10}}
	\put(0,9){\line(1,0){10}}
    \put(10,9){\line(3,4){5}}
    \put(10,9){\line(3,-4){5}}

	
	\put(-17,17){$\phi_1$}
	\put(-17,0){$\phi_2$}
        \put(-6,10){$\phi_3$}
        \put(5,10){$\phi_4$}
	\put(15,17){$\phi_5$}
        \put(16,6){$\phi_1$}
        \put(16,11){$\phi_1$}
	\put(15,0){$\phi_7$}

    \put(10,9){\line(3,2){5}}
    \put(10,9){\line(3,-2){5}}

\end{picture}
\end{center}

\item $f_1f_3'=2f_6f_1'; \quad  (i,j,k,\ell)=(2,3,4,7),  \quad S=\{1,1\}.$
\begin{center}
\begin{picture}(50,23)


\put(27, 8){$=$}
\put(34, 8){$2\bigg($}
\put(82, 8){$\bigg)$}

	\put(50,9){\line(-3,4){5}}
    \put(50,9){\line(-3,-4){5}}
	\put(50,9){\line(1,0){10}}
	\put(60,9){\line(1,0){10}}
    \put(70,9){\line(3,4){5}}
    \put(70,9){\line(3,-4){5}}

	
	\put(43,17){$\phi_2$}
	\put(40,8){$\phi_1$}
	\put(43,0){$\phi_4$}
        \put(54,10){$\phi_5$}
        \put(65,10){$\phi_2$}
	\put(75,17){$\phi_3$}
	\put(76,8){$\phi_1$}
	\put(75,0){$\phi_7$}

    \put(50,9){\line(-2,0){5}}

    \put(70,9){\line(2,0){5}}

	\put(-10,9){\line(-3,4){5}}
    \put(-10,9){\line(-3,-4){5}}
	\put(-10,9){\line(1,0){10}}
	\put(0,9){\line(1,0){10}}
    \put(10,9){\line(3,4){5}}
    \put(10,9){\line(3,-4){5}}

	
	\put(-17,17){$\phi_2$}
	\put(-17,0){$\phi_3$}
        \put(-6,10){$\phi_1$}
        \put(5,10){$\phi_6$}
	\put(15,17){$\phi_4$}
        \put(16,6){$\phi_1$}
        \put(16,11){$\phi_1$}
	\put(15,0){$\phi_7$}

    \put(10,9){\line(3,2){5}}
    \put(10,9){\line(3,-2){5}}

\end{picture}
\end{center}

\end{itemize}

\paragraph*{The WDVV equations in \eqref{pillow-wdvv-1} are explained graphically as follows:}
\begin{itemize}

\item $f_1'+2f_1f_2=0, \quad  (i,j,k,\ell)=(2,2,1,1),  \quad S=\{4,4\}$.

\begin{center}
\begin{picture}(120,23)

	\put(-10,9){\line(-3,4){5}}
    \put(-10,9){\line(-3,-4){5}}
	\put(-10,9){\line(1,0){10}}
	\put(0,9){\line(1,0){10}}
    \put(10,9){\line(3,4){5}}
    \put(10,9){\line(3,-4){5}}

	
	\put(-17,17){$\phi_2$}
	\put(-17,0){$\phi_2$}
        \put(-6,10){$\phi_0$}
        \put(5,10){$\phi_5$}
	\put(15,17){$\phi_1$}
        \put(16,6){$R$}
        \put(16,11){$R$}
	\put(15,0){$\phi_1$}

    \put(10,9){\line(3,2){5}}
    \put(10,9){\line(3,-2){5}}


\put(27, 8){$+$}
\put(83, 8){$=$}

	\put(50,9){\line(-3,4){5}}
    \put(50,9){\line(-3,-4){5}}
	\put(50,9){\line(1,0){10}}
	\put(60,9){\line(1,0){10}}
    \put(70,9){\line(3,4){5}}
    \put(70,9){\line(3,-4){5}}

	
	\put(43,17){$\phi_2$}
        \put(40,8){$R$}
	\put(43,0){$\phi_2$}
        \put(54,10){$R$}
        \put(65,10){$R$}
	\put(75,17){$\phi_1$}
	\put(76,8){$R$}
	\put(75,0){$\phi_1$}

    \put(50,9){\line(-2,0){5}}
    \put(70,9){\line(2,0){5}}

	\put(100,9){\line(-3,4){5}}
    \put(100,9){\line(-3,-4){5}}
	\put(100,9){\line(1,0){10}}
	\put(110,9){\line(1,0){10}}
    \put(120,9){\line(3,4){5}}
    \put(120,9){\line(3,-4){5}}

	
	\put(93,17){$\phi_1$}
	\put(93,0){$\phi_1$}
        \put(104,10){$\phi_1$}
        \put(115,10){$\phi_6$}
	\put(125,17){$\phi_2$}
        \put(126,8){$\phi_1$}
	\put(125,0){$\phi_5$}

    \put(120,9){\line(2,0){5}}

\end{picture}
\end{center}

\item $f_2'+2f_2f_3+f_3'=0, \quad (i,j,k,\ell)=(1,3,2,2),  \quad S=\{4,4\}$.

\begin{center}
\begin{picture}(120,23)

	\put(-10,9){\line(-3,4){5}}
    \put(-10,9){\line(-3,-4){5}}
	\put(-10,9){\line(1,0){10}}
	\put(0,9){\line(1,0){10}}
    \put(10,9){\line(3,4){5}}
    \put(10,9){\line(3,-4){5}}

	
	\put(-17,17){$\phi_1$}
	\put(-17,0){$\phi_3$}
        \put(-6,10){$\phi_0$}
        \put(5,10){$\phi_5$}
	\put(15,17){$\phi_2$}
        \put(16,6){$R$}
        \put(16,11){$R$}
	\put(15,0){$\phi_2$}

    \put(10,9){\line(3,2){5}}
    \put(10,9){\line(3,-2){5}}


\put(27, 8){$+2\bigg($}
\put(83, 8){$\bigg)+$}
\put(127, 8){$=0$}

	\put(50,9){\line(-3,4){5}}
    \put(50,9){\line(-3,-4){5}}
	\put(50,9){\line(1,0){10}}
	\put(60,9){\line(1,0){10}}
    \put(70,9){\line(3,4){5}}
    \put(70,9){\line(3,-4){5}}

	
	\put(43,17){$\phi_1$}
        \put(40,8){$R$}
	\put(43,0){$\phi_3$}
        \put(54,10){$R$}
        \put(65,10){$R$}
	\put(75,17){$\phi_2$}
	\put(76,8){$R$}
	\put(75,0){$\phi_2$}

    \put(50,9){\line(-2,0){5}}
    \put(70,9){\line(2,0){5}}

	\put(100,9){\line(-3,4){5}}
    \put(100,9){\line(-3,-4){5}}
	\put(100,9){\line(1,0){10}}
	\put(110,9){\line(1,0){10}}
    \put(120,9){\line(3,4){5}}
    \put(120,9){\line(3,-4){5}}

	
	\put(93,17){$\phi_1$}
	\put(91,6){$R$}
	\put(91,11){$R$}
	\put(93,0){$\phi_3$}
        \put(104,10){$\phi_7$}
        \put(115,10){$\phi_0$}
	\put(125,17){$\phi_2$}
	\put(125,0){$\phi_2$}

    \put(100,9){\line(-3,2){5}}
    \put(100,9){\line(-3,-2){5}}


\end{picture}
\end{center}

\item $f_3'+f_3^2=f_1^2, \quad (i,j,k,\ell)=(1,3,1,3), \quad  S=\{4,4\}.$

\begin{center}
\begin{picture}(120,23)

	\put(-10,9){\line(-3,4){5}}
    \put(-10,9){\line(-3,-4){5}}
	\put(-10,9){\line(1,0){10}}
	\put(0,9){\line(1,0){10}}
    \put(10,9){\line(3,4){5}}
    \put(10,9){\line(3,-4){5}}

	
	\put(-17,17){$\phi_1$}
	\put(-17,0){$\phi_3$}
        \put(-6,10){$\phi_0$}
        \put(5,10){$\phi_5$}
	\put(15,17){$\phi_1$}
        \put(16,6){$R$}
        \put(16,11){$R$}
	\put(15,0){$\phi_3$}

    \put(10,9){\line(3,2){5}}
    \put(10,9){\line(3,-2){5}}


\put(27, 8){$+$}
\put(83, 8){$=$}

	\put(50,9){\line(-3,4){5}}
    \put(50,9){\line(-3,-4){5}}
	\put(50,9){\line(1,0){10}}
	\put(60,9){\line(1,0){10}}
    \put(70,9){\line(3,4){5}}
    \put(70,9){\line(3,-4){5}}

	
	\put(43,17){$\phi_1$}
        \put(40,8){$R$}
	\put(43,0){$\phi_3$}
        \put(54,10){$R$}
        \put(65,10){$R$}
	\put(75,17){$\phi_1$}
	\put(76,8){$R$}
	\put(75,0){$\phi_3$}

    \put(50,9){\line(-2,0){5}}
    \put(70,9){\line(2,0){5}}

	\put(100,9){\line(-3,4){5}}
    \put(100,9){\line(-3,-4){5}}
	\put(100,9){\line(1,0){10}}
	\put(110,9){\line(1,0){10}}
    \put(120,9){\line(3,4){5}}
    \put(120,9){\line(3,-4){5}}

	
	\put(93,17){$\phi_1$}
        \put(90,8){$R$}
	\put(93,0){$\phi_1$}
        \put(104,10){$R$}
        \put(115,10){$R$}
	\put(125,17){$\phi_3$}
        \put(126,8){$R$}
	\put(125,0){$\phi_3$}

    \put(100,9){\line(-2,0){5}}
    \put(120,9){\line(2,0){5}}

\end{picture}
\end{center}

\end{itemize}

\section{Analytic continuation of periods and connection matrices}
\label{appendixB}

In this appendix we discuss the analytic continuation of the periods for the elliptic curve families mentioned in Section \ref{secperiodcalculation}.
Most of the material presented here is classical  (see e.g., \cite{Fre:1995bc} for the monodromy calculations for the Hesse pencil) and are collected here for reference.

In the following we give the details for the Hesse pencil of elliptic curves. 
The discussions for the other elliptic curve families are similar.
In this case, the Picard-Fuchs operator is given by the Gauss hypergeometric one in \eqref{eqPFaroundinfinity}, with $a=1/3,b=2/3,c=1$,
\begin{equation}
\mathcal{L}=\theta_{\alpha}^{2}-\alpha(\theta_{\alpha}+{1\over 3})(\theta_{\alpha}+{2\over 3})\,, \quad \theta_{\alpha}:=\alpha{\partial \over \partial \alpha}\,.
\end{equation}

\paragraph{Local basis of solutions near $\alpha=0$}

The basis for the space of periods that we take near the singularity $\alpha=0$ on the base of the elliptic curve family or equivalently the cusp $[\tau]=[i\infty]$ on the modular curve, with the Hauptmodul given in \eqref{eqnHauptmodul}, is
\begin{eqnarray}
\pi_{1}&=&~_{2}F_{1}({1\over 3},{2\over 3},1,\alpha)\,,\nonumber\\
\kappa \pi_{2}&=&\kappa~_{2}F_{1}({1\over 3},{2\over 3},1,1-\alpha)\,.
\end{eqnarray}
Here we have followed the convention in \cite{Erdelyi:1981}  for the ordering of the solutions to hypergeometric differential equations. Our notations $\pi_{i},i=1,2\cdots$ correspond to the ones $u_{i},i=1,2\cdots$ therein.
The coefficient 
\begin{equation}
\kappa={1\over -2\pi i}{\Gamma(a)\Gamma(b)\over \Gamma(a+b)}= {i \over \sqrt{3}}
\end{equation} 
is fixed such that 
the local monodromy $M_{\infty}^{\mathrm{local}}: (\pi_{1},\kappa \pi_{2})\mapsto (\pi_{1},\kappa \pi_{2}) M_{\infty}^{\mathrm{local}}$ at $[\tau]=[i\infty]$ is
\begin{equation}
M_{\infty}^{\mathrm{local}}=
 \left( \begin{array}{cc}
1 & 1 \\
0& 1
\end{array} \right)=T\in \mathrm{SL}_{2}(\mathbb{Z})\,.
\end{equation}
See \cite[Page 110]{Erdelyi:1981} for details about the analytic continuation of $\kappa u_{2}$.
This normalization also implies that the local coordinate near the infinity cusp can be defined by
\begin{equation}
\tau_{\infty}:={\kappa \pi_{2}\over \pi_{1}}\,.
\end{equation}
It turns out that \cite{Berndt:1995}
\begin{equation}
\tau_{\infty}=\tau\,.
\end{equation}
Throughout this appendix we use both notations interchangeably. 

\paragraph{Local basis of solutions near $\alpha=1$}

The suitable basis at $[\tau]=[0]$ on the modular curve or equivalently equivalently $\beta:=1-\alpha=0$ on the base of the elliptic curve family is given by
\begin{eqnarray}
\pi_{2}&=&~_{2}F_{1}({1\over 3},{2\over 3},1,\beta)\,,\nonumber\\
\kappa \pi_{1}&=&{i\over \sqrt{3}}~_{2}F_{1}({1\over 3},{2\over 3},1,1-\beta)\,.
\end{eqnarray}
The local monodromy $M_{0}^{\mathrm{local}}$ near $\alpha=1$
is
\begin{equation}
M_{0}^{\mathrm{local}}=
 \left( \begin{array}{cc}
1& 1 \\
0 & 1
\end{array} \right)\,.
\end{equation}
The local coordinate near this cusp is given by
\begin{equation}
\tau_{0}:={\kappa \pi_{1}\over \pi_{2}}\,.
\end{equation}

\paragraph{Local basis of solutions near $\alpha=\infty$}

Near the elliptic point $[\tau]=[-\kappa \zeta_{3}]$ on the modular curve or equivalently the orbifold point $\alpha=\infty$ on the base of the elliptic curve family, the local basis is \cite{Erdelyi:1981}
\begin{eqnarray}
\pi_{3}&=&(-\alpha)^{-{1\over 3}}~_{2}F_{1}({1\over 3},{1\over 3},{2\over 3},\alpha^{-1})\,,\nonumber\\
\pi_{4}&=&(-\alpha)^{-{2\over 3}}~_{2}F_{1}({2\over 3},{2\over 3},{4\over 3},\alpha^{-1})\,.
\end{eqnarray}
The local monodromy in this basis is diagonal and of finite order:
\begin{equation}
M_{\mathrm{orb}}^{\mathrm{local}}=
 \left( \begin{array}{cc}
\zeta_{3} & 0\\
0& \zeta_{3}^{2}
\end{array} \right)\,.
\end{equation}

\paragraph{Connection matrices and analytic continuation}

The connection matrix $P$ between the two local bases in $(\pi_{2},\kappa \pi_{1})=(\pi_{1},\kappa \pi_{2}) P$
is easily seen to be
\begin{equation}
P=
 \left( \begin{array}{cc}
0 & {i\over \sqrt{3}} \\
{ \sqrt{3}\over i}  & 0
\end{array} \right)\,.
\end{equation}
It relates the local coordinates
$\tau_{\infty}$ and
$\tau_{0}$ in the following way
\begin{equation}
\tau_{0}=
 \left( \begin{array}{cc}
0 & {i\over \sqrt{3}} \\
{ \sqrt{3}\over i}  & 0
\end{array} \right) \cdot \tau_{\infty}=-{1\over 3\tau_{0}}
\end{equation}
This is nothing but the Fricke involution which was essential in \cite{Alim:2013eja} to study mirror symmetry of some non-compact CY 3-folds.
It induces, see \cite{Maier:2009}, the action on the Hauptmodul $\alpha\mapsto \beta$.
The moduli interpretation of the modular curve says that it is the moduli space of pairs $(E,C)$, where $C$ is a cyclic group of order three of the group $E[3]$ of $3$-torsion points of the elliptic curve $E$. Fricke involution is the map
\begin{equation}
W_{3}: (E,C)\mapsto  (E/C,E[3]/C)\,.
\end{equation}

The particular form of this connection matrix also answers the question
that why it is the Fricke involution instead of the S-transformation which naturally relates the local expansions of the modular forms. One could have chosen a different normalization of the periods so that $\tau_{\infty}$ and
$\tau_{0}$ are indeed related by $S$, but then either the new variable $q:=\exp (2\pi i \tau)$ would not be such that
$j={1/ q}+744+\cdots$ or the monodromy is not the expected transform $T$ which indicates the singular type $I_{1}$ of the degeneration of the elliptic curve at the singularity.

The monodromy at $\alpha=1$ in the original basis $(\pi_{1}, \kappa \pi_{2})$ is
\begin{equation}
M_{0}=PM_{0}^{\mathrm{local}}P^{-1}=
 \left( \begin{array}{cc}
 1& 0 \\
-3& 1
\end{array} \right)=-ST^{3}S\,.
\end{equation}

The connection matrix $Q$ between $(\pi_{3},\pi_{4})$ and
 $(\pi_{1},\pi_{2})$
satisfies, see \cite[Page 107]{Erdelyi:1981},
\begin{eqnarray}
\pi_{1}&=&{\Gamma(c)\Gamma(b-a)\over \Gamma(c-a)\Gamma(b)}\pi_{3}
+
{\Gamma(c)\Gamma(a-b)\over \Gamma(c-b)\Gamma(a)}\pi_{4}\,,\nonumber\\
\pi_{2}&=&{\Gamma(a+b+1-c)\Gamma(b-a)\over \Gamma(b+1-c)\Gamma(b)}e^{-\pi i a}\pi_{3}
+
{\Gamma(a+b+1-c)\Gamma(a-b)\over \Gamma(a+1-c)\Gamma(a)}e^{-\pi i b}\pi_{4}\,.
\end{eqnarray}
Hence 
\begin{equation}
Q^{-1}=
 \left( \begin{array}{cc}
 {\Gamma({1\over 3})\over \Gamma({2\over 3})^{2}}&  {\Gamma({1\over 3})\over \Gamma({2\over 3})^{2}} e^{-i\pi {1\over 3}}\\
 {\Gamma(-{1\over 3})\over \Gamma({1\over 3})^{2}}&   {\Gamma(-{1\over 3})\over \Gamma({1\over 3})^{2}}e^{-i\pi {2\over 3}}
\end{array} \right)\,.
\end{equation}
The monodromy near $[\tau]=[-\kappa \zeta_{3}]$ in the original basis is
\begin{equation}
M_{\mathrm{orb}}=QM_{\mathrm{orb}}^{\mathrm{local}}Q^{-1}=
 \left( \begin{array}{cc}
 -2& -1 \\
3& 1
\end{array} \right)\,.
\end{equation}
The consistency condition is checked to be satisfied:
\begin{equation}
M_{\mathrm{orb}}M_{0}M_{\infty}=\mathrm{Id}\,.
\end{equation}
For ease of notation, we denote
\begin{eqnarray}
\tilde{\pi}_{3}&=&\gamma_{+} \pi_{3}\,,\gamma_{+}= {\Gamma({1\over 3})\over \Gamma({2\over 3})^{2}}\,,\nonumber\\
\tilde{\pi}_{4}&=&\gamma_{-} \pi_{4}\,,\gamma_{-}= {\Gamma(-{1\over 3})\over \Gamma({1\over 3})^{2}}\,.
\end{eqnarray}
Similar to the local coordinates $\tau_{\infty},\tau_{0}$, one
defines the local coordinate near the orbifold point by
\begin{equation}\label{eqnormalizedperiodellipticpoint}
\tau_{\mathrm{orb}}:=K   {\tilde{\pi}_{4}\over\tilde{\pi}_{3}}\,.
\end{equation}
The overall constant $K$ is fixed by requiring that 
the connection matrix relating $\tau_{\infty}$ and $\tau_{\mathrm{orb}}$ is given by a fractional linear transform in $\mathrm{SL}_{2}(\mathbb{C})$:
\begin{equation}\label{eqlocaluniformizingellipticpoint}
\tau_{\infty}
={\kappa (e^{-\pi i a } \tilde{u}_{3}+e^{-\pi i b }  \tilde{u}_{4}) \over  ( \tilde{u}_{3}+\tilde{u}_{4})}
\,,\quad \tau_{\mathrm{orb}}={\tau_{\infty}-\kappa e^{-\pi i a}\over -{1\over K}\tau_{\infty}+{\kappa\over K} e^{-\pi i b}}\,.
\end{equation}
Therefore, we obtain
\begin{equation}
K=\kappa (e^{-\pi i b} -e^{-\pi i a} )\,.
\end{equation}
The expression for the connection matrix also tells that the elliptic fixed point given by $\tau_{\mathrm{orb}}=0$ has the 
$\tau$-value 
\begin{equation}
\tau_{*}=\kappa e^{-\pi i a}\,.
 \end{equation}
Note that the following identity holds due to the relation $a+b=1$
\begin{equation}
-K=\tau_{*}-\bar{\tau}_{*}\,.
\end{equation}

\end{appendices}

\providecommand{\bysame}{\leavevmode\hbox to3em{\hrulefill}\thinspace}

\bigskip{}

\noindent{\small Department of Mathematics,  Stanford University, Stanford, California 94305, USA}

\noindent{\small Email: \tt yfshen@stanford.edu}

\medskip{}
\noindent{\small Perimeter Institute for Theoretical Physics, 31 Caroline Street North, Waterloo, Ontario N2L 2Y5, Canada}

\noindent{\small Email: \tt jzhou@perimeterinstitute.ca}

\end{document}